\theoremstyle{plain}
\numberwithin{equation}{section}
\theoremstyle{plain}
\newtheorem{theorem}{Theorem}[section]
\newtheorem{corollary}[theorem]{Corollary}
\newtheorem{lemma}[theorem]{Lemma}
\newtheorem{proposition}[theorem]{Proposition}
\theoremstyle{definition}
\newtheorem{definition}[theorem]{Definition}
\newtheorem{example}[theorem]{Example}
\newtheorem{remark}[theorem]{Remark}
\newtheorem{problem}[theorem]{Problem}
\newcommand{\BC}{{\mathbb C}}\newcommand{\BD}{{\mathbb D}}
\newcommand{\BI}{{\mathbb I}}
\newcommand{\BN}{{\mathbb N}}
\newcommand{\BR}{{\mathbb R}}
\newcommand{\BT}{{\mathbb T}}
\newcommand{\BZ}{{\mathbb Z}}
\newcommand{\cF}{{\mathcal F}}
\newcommand{\cX}{{\mathcal X}}
\newcommand{\cZ}{{\mathcal Z}}
\newcommand{\al}{\alpha}
\newcommand{\de}{\delta}
\newcommand{\vep}{\varepsilon}
\newcommand{\la}{\lambda}
\newcommand{\si}{\sigma}
\newcommand{\Om}{\Omega}
\newcommand{\re}{\operatorname{Re}}
\newcommand{\ov}[1]{{\overline{#1}}}
\newcommand{\inn}[2]{\ensuremath{\langle #1,#2 \rangle}}
\newcommand{\tu}[1]{\textup{#1}}
\newcommand{\wtil}[1]{{\widetilde{#1}}}
\newcommand{\what}[1]{{\widehat{#1}}}
\newcommand{\half}{\frac{1}{2}}
\newcommand{\ands}{\quad\mbox{and}\quad}
\definecolor{purple}{rgb}{.6,0,.7}
\definecolor{green}{rgb}{.0,.6,.0}
\begin{document}

\title{Optimal interpolation in Hardy and Bergman spaces:\ a reproducing kernel Banach space approach}

\author[G.J. Groenewald]{Gilbert J. Groenewald}
\address{G.J. Groenewald, School of Mathematical and Statistical Sciences,
North-West University,
Research Focus: Pure and Applied Analytics,
Private Bag X6001,
Potchefstroom 2520,
South Africa}
\email{Gilbert.Groenewald@nwu.ac.za}

\author[S. ter Horst]{Sanne ter Horst}
\address{S. ter Horst, School of Mathematical and Statistical Sciences,
North-West University,
Research Focus: Pure and Applied Analytics,
Private~Bag X6001,
Potchefstroom 2520,
South Africa
and
DSI-NRF Centre of Excellence in Mathematical and Statistical Sciences (CoE-MaSS),
Johannesburg,
South Africa}
\email{Sanne.TerHorst@nwu.ac.za}

\author[H.J. Woerdeman]{Hugo J. Woerdeman}
\address{H.J. Woerdeman, Department of Mathematics, Drexel University, Philadelphia, PA 19104, USA}
\email{hugo@math.drexel.edu}

\thanks{This work is based on the research supported in part by the National Research Foundation of South Africa (Grant Numbers 118513 and 127364). In addition, HW is partially supported by NSF grant DMS-2000037.}

\subjclass[2020]{46E15, 42B30, 30H10, 46B10, 46B25}

\keywords{Reproducing kernel Banach space, optimal interpolation, Hardy spaces}

\begin{abstract}
After a review of the reproducing kernel Banach space framework and semi-inner products, we apply the techniques to the setting of Hardy spaces $H^p$ and Bergman spaces $A^p$, $1<p<\infty$, on the unit ball in $\BC^n$, as well as the Hardy space on the polydisk and half-space. In particular, we show how the framework leads to a procedure to find a minimal norm element $f$ satisfying interpolation conditions $f(z_j)=w_j$, $j=1,\ldots , n$. We also explain the techniques in the setting of $\ell^p$ spaces where the norm is defined via a change of variables and provide numerical examples.
\end{abstract}

\maketitle

\section{Introduction}\label{S:Intro}
The reproducing kernel framework has led to very useful techniques in dealing with Hilbert spaces of functions, especially in interpolation and approximation problems. Based on ideas developed in the early twentieth century (see \cite{aron} for details), the general theory of reproducing kernel Hilbert spaces goes back to the work of Aronszajn \cite{aron0, aron}, followed up by, for instance, \cite{S-NK} and later with a more applied focus in \cite{Aizerman}. A recent comprehensive account can be found in the monograph \cite{PaulsenR}. 

More recently, starting with \cite{Boser}, reproducing kernels have been used extensively in the setting of machine learning; see, e.g., \cite{Fine, Fuku, Gretton, Slavakis, SFL11}. It were actually the applications to machine learning that inspired generalizing the framework to Banach spaces, leading to the notion of reproducing kernel Banach space, as introduced in \cite{ZXZ09}, combined with semi-inner product techniques from \cite{G67}. The advantage of moving out of the Hilbert space setting, is that this allows for a larger variety of norms, enabling one to deal with more intricate problems, while still maintaining a large part of the theory, subject to the Banach space geometry conditions one is willing to accept. One of the seminal results underlying many of the applications is the so-called Representer Theorem, going back to Wahba \cite{W90}, extended to the Hilbert space framework in \cite{SHS01} and to the Banach space setting in \cite{ZXZ09}. It has since been used extensively to address machine learning topics in a Banach space setting, such as Shannon sampling \cite{ZZ11}, multi-task learning problems \cite{ZZ13}, support vector machines \cite{FHY15,XY19} and converse sampling \cite{CM22}, to mention just a few; cf., \cite{LZZ22} for an up to date account and a unified framework incorporating many recent developments. In recent years, much attention has been devoted to extending the framework to non-reflexive Banach spaces, usually built with an $\ell^1$-based construction, cf., \cite{WX,CX21,U21,LWXY23,WXY}, since norms constructed in this way often lead to sparse solutions for the problem under consideration; for an insightful recent review paper and many further references see \cite{X}.
  
A drawback to the reproducing kernel Banach space approach is the additional computational complexity compared to the Hilbert space setting. In particular, when using semi-inner products, the duality operator connecting the semi-inner product on a Banach space $\cX$ and the duality pairing between $\cX$ and its dual space becomes a nonlinear map which in many concrete Banach spaces is not known explicitly, and it is this duality operator that plays an essential role in the solution to optimal interpolation and regularization problems provided by the Representer Theorem. Motivated by this observation, we consider optimal interpolation problems in Hardy $H^p$ spaces and Bergman $A^p$ spaces on various single- and multi-variable domains, and analyze what the reproducing kernel Banach space approach provides in these concrete cases. Some of these problems have been considered as extremal problems in the complex analysis literature since the 1950s, cf., \cite{MR50,RS}, and we draw from techniques developed in these papers to obtain more concrete results and illustrate these with some numerical examples. While the details are worked out for Hardy and Bergman spaces, we hope that our results will also give some insight into how one can approach interpolation problems in other Banach spaces of functions that can be placed in the reproducing kernel Banach space setting; in the last subsection we have already developed the results in the setting of $\ell^p$, where we use a norm depending on a change of basis. In the present paper, we only consider Hardy $H^p$ spaces and Bergman $A^p$ spaces for $1<p<\infty$. Given the recent interests in non-reflexive Banach spaces we plan to return to this topic in a follow-up paper in which the cases $p=1$ and $p=\infty$ are also addressed.

The paper is organized as follows. In Section \ref{S:SIP-RKBS} we review the general theory of semi-inner product reproducing kernel Banach spaces including the first representer theorem which yields optimality conditions for a minimal norm interpolant. In Section \ref{S:SIPsubspace} we consider how the main ingredients of the theory gets translated when restricting to a subspace of a semi-inner product space. In Section \ref{S:Hp interpolation} we provide the details how the Hardy spaces and Bergman spaces appear as semi-inner product reproducing kernel Banach spaces. In Section \ref{S:num} we provide the numerical details how one constructs minimal norm interpolants in $H^p$ on the unit disk  and we also illustrate the results in the setting of $\ell^p$ with a change of basis.

\section{Semi-inner product reproducing kernel Banach spaces: general theory}\label{S:SIP-RKBS}

In this section we give a brief review of some of the theory concerning semi-inner product reproducing kernel Banach spaces. Such spaces play an important role in machine learning theory. The most common examples of reproducing kernel Banach spaces are those generated by a semi-inner product, although there are also examples that do not fall in the semi-inner product framework; we refer to \cite{LZZ22} for a recent overview. We will only consider the semi-inner product framework here, since the special cases we consider later on fit well in this framework, as we  will see in Section \ref{S:Hp interpolation}. Also, we shall only discuss spaces over the complex numbers here.

\subsection{Semi-inner product spaces}\label{SubS:SIP}

The notion of a semi-inner product space goes back to Lumer  \cite{L61}. Most of the material in this subsection originates from the seminal paper of Giles \cite{G67}.

A {\em semi-inner product space} is a vector space $\cX$ (over $\BC$) together with a map $[\cdot ,\cdot ]:\cX\times\cX \to \BC$ such that for all $x,y,z\in\cX$ and $\al\in\BC$:
\begin{itemize}
  \item[(SIP1)] $[x+\al y, z]=[x,z]+\al[y,z]$;
  \item[(SIP2)] $[x,\al y]=\ov{\al}[x,y]$;
  \item[(SIP3)] $[x,x]\geq 0$ and $[x,x]=0$ implies $x=0$;
  \item[(SIP4)] $|[x,y]|^2\leq [x,x] [y,y]$.
\end{itemize}
The difference with an inner product is that a semi-inner product need not be additive in the second component. Indeed, as was shown for instance in \cite[Proposition 6] {ZXZ09},
additivity in the second component, is equivalent to $[x,y]=\overline{[y,x]}$ for all $x,y\in \cX$. Nonetheless, it is easy to see that a semi-inner product also defines a norm $\|\cdot \|$ on $\cX$ in the usual way:
\begin{equation}\label{SIPnorm}
\|x\|=[x,x]^\half,\quad x\in\cX.
\end{equation}
Hence each semi-inner product space is also a normed space. Conversely, by \cite[Theorem 1]{G67}, each normed space $\cX$ admits a semi-inner product $[\cdot,\cdot]$ such that its norm is given by \eqref{SIPnorm}. In that case we say that $[\cdot,\cdot]$ is a semi-inner product on $\cX$ that generates the norm of $\cX$. The existence proof in \cite{G67} relies on the Hahn-Banach theorem, and as a result, in general, there is no construction of the semi-inner product and it need not be unique. We return to this topic in Subsection \ref{SubS:SIPunique}.

Let $(\cX,[\cdot,\cdot])$ be a semi-inner product space with norm $\|\cdot\|$. We then define the dual space $\cX'$ of the normed space $\cX$ in the usual way, and denote the duality pairing between $\cX$ and $\cX'$ by $\inn{\cdot}{\cdot}_{\cX,\cX'}$, i.e.,
\begin{equation}\label{dualpairing}
\inn{x}{\varphi}_{\cX,\cX'}=\varphi(x),\quad x\in\cX,\varphi\in\cX'.
\end{equation}
%When no confusion can arise, we simply write $\inn{\cdot}{\cdot}$ to indicate the duality pairing. 
Using the semi-inner product, each element $x\in\cX$ defines an element $x^{\star_\cX}$ in $\cX'$ via
\begin{equation}\label{Star}
x^{\star_\cX}(y)=[y,x],\quad y\in\cX.
\end{equation}
Note that in case $\cZ$ is a closed complemented subspace of $\cX$, then restricting the semi-inner product of $\cX$ to $\cZ$ provides a semi-inner product on $\cZ$ which generates the norm on $\cZ$ obtained by restricting the norm of $\cX$. However, the dual space of $\cZ$ may not be so straightforward to determine, and as a result the $\star_\cZ$-operation applied to $x\in\cZ$ may be different compared to when $x$ is viewed as an element of $\cX$. The map $x\mapsto x^{\star_\cX}$ connects the semi-inner product and the duality pairing via the following formula:
\begin{equation}\label{SIPvsDP}
[y,x]=\inn{y}{x^{\star_\cX}},\quad x,y\in\cX.
\end{equation}
It is easy to see that $(\la x)^{\star_\cX}=\ov{\la} x^{\star_\cX}$ for any $\la\in\BC$ and $x\in\cX$. However, the $\star_\cX$-operation is additive if and only if the semi-inner product is an inner product. Indeed, since the duality pairing is additive in the second component, it follows directly from \eqref{SIPvsDP} that additivity of the $\star_\cX$-operation corresponds to additivity of the semi-inner product in the second component. To illustrate the above, we consider the semi-inner product of $L^p$; cf., \cite[Section 3]{G67} for more details and proofs.

\begin{example}\label{Ex:Lp}
Let $(\Omega , {\mathcal F},  \mu)$ be a measure space. For $1<p<\infty$, let  $L^p=L^p(\Omega , {\mathcal F},  \mu)$ denote the (equivalence classes of) measurable functions $f$ on $\Omega$  for which the norm 
\[
\|f\|_{L^p}:=\left( \int_\Omega|f|^p \tu{d}\mu \right)^{1/p}
\] 
is finite. The dual space is isometrically isomorphic to $L^q=L^q(\Omega , {\mathcal F},  \mu)$, with $1<q<\infty$ such that $\frac{1}{p}+\frac{1}{q}=1$, with the duality pairing 
\[
\inn{f}{g}= \int_\Omega f g \ \tu{d}\mu,\quad f\in L^p, g\in L^q.
\] 
When $\Omega$ is a topological space and ${\mathcal F}$ is the Borel $\sigma$-algebra we may write $L^p(\Omega ,  \mu)$ instead of $L^p(\Omega , {\mathcal F},  \mu)$. 

As it turns out, by results reviewed in the next subsection, the semi-inner product of $L^p$ is unique, and the $\star_{L^p}$-operation associating the semi-inner product and the duality pairing is given by 
\[
f^{\star_{L^p}}= \frac{1}{\|f\|_{L^p}^{p-2}} \overline{f}|f|^{p-2}.   
\]
Here $\overline{f}(x)=\overline{f(x)}$ and $|f|(x)= |f(x)|$. (Note that in the above formula one needs to interpret $0 |0|^{p-2}$ as $0$ and also understand that ${\bf 0}^{\star_{L^p}}={\bf 0}$.) In fact, the map $f\mapsto f^{\star_{L^p}}$ provides a non-additive (if $p\neq 2$) isometric bijection from $L^p$ to $L^q$, with inverse map the $\star_{L^q}$-operation on $L^q$; it is an interesting exercise to verify directly that $(f^{\star_{L^p}})^{\star_{L^q}}=f$ for each $f\in L^p$. It then follows that the semi-inner product on $L^p$ is given by 
\[
[f,h]=\inn{f}{h^{\star_{L^p}}}=\frac{1}{ \|h\|_{L^p}^{p-2}}\int_\Omega f\overline{h} |h|^{p-2} \tu{d}\mu.
\]  
\end{example}

 We note that the above definition of $\star_{L^p}$  also works for $p=1$. However, in this case the semi-inner product is not unique and we will not pursue the $p=1$ case further in this paper.
%\footnote{HW: In the above examples $p=1$ also works, except for uniqueness of the sip and a lack of an inverse to the star operation. Do we say something about this? By the way, the two examples can in principle be combined but we probably don't want to do this.}

\subsection{Uniqueness of the generating semi-inner product}\label{SubS:SIPunique}

The theory of semi-inner products discussed in the previous section does not require any conditions on the normed space. If one requires more from the semi-inner product, assumptions have to be made about the normed space. In the remainder of this section we shall assume that $\cX$ is a Banach space and discuss how various conditions on the Banach space geometry improve the behaviour of the semi-inner product.

Let $(\cX,\|\cdot\|)$ be a Banach space, with generating semi-inner product $[\cdot,\cdot]$. Define the {\em unit sphere} $S_\cX$ and {\em closed unit ball} $B_\cX^{\rm cl}$ of $\cX$ by 
\[
S_\cX=\{x\in\cX \colon \|x\|=1\}\ands B_\cX^{\rm cl}=\{x\in\cX \colon \|x\|\leq 1\}.
\]  
Now recall that $\cX$ is called {\em smooth} if each $x_0\in S_\cX$ is a {\em point of smoothness} for $B_\cX^{\rm cl}$, that is, if there exists a unique $\varphi\in\cX'$ with $\|\varphi\|=1=\varphi(x_0)$. Moreover, the semi-inner product on $\cX$ is called {\em continuous} if for any $x,y\in S_\cX$:
\[
\re([y,x+ty])\to\re([y,x]) \mbox{ as } \BR\ni t \to 0,
\]
and {\em uniformly continuous} whenever this convergence is uniform in $(x,y)\in S_\cX\times S_\cX$. We then have the following characterizations of uniqueness of the semi-inner product generating the norm of $\cX$; cf., \cite[Theorems 1 and 3]{G67} and their proofs as well as \cite[Theorem 5.4.17]{M98}.

\begin{theorem}\label{T:SIPunique1}
For a Banach space $(\cX,\|\cdot\|)$ and a semi-inner product $[\cdot,\cdot]$ that generates the norm of $\cX$, the following are equivalent:
\begin{itemize}
\item[(1)] $[\cdot,\cdot]$ is the unique semi-inner product that generates the norm of $\cX$;

\item[(2)] $\cX$ is a smooth Banach space;

\item[(3)] $[\cdot,\cdot]$ is a continuous semi-inner product.

\end{itemize}
\end{theorem}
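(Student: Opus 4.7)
The cleanest route is to establish the cycle by proving the two equivalences $(1)\Leftrightarrow(2)$ and $(2)\Leftrightarrow(3)$ separately, with the smoothness condition playing the role of bridge. The underlying object in both directions is the support functional: if $x_0\in S_\cX$, then a Hahn--Banach argument produces $\varphi\in S_{\cX'}$ with $\varphi(x_0)=1$, and smoothness is precisely the statement that such a $\varphi$ is unique. On the other hand, every semi-inner product yields, via \eqref{Star}, a canonical support functional $x_0^{\star_\cX}$ at each $x_0\in S_\cX$, since $x_0^{\star_\cX}(x_0)=[x_0,x_0]=1$ and $|x_0^{\star_\cX}(y)|\leq\|y\|$ by (SIP4). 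So semi-inner products generating $\|\cdot\|$ are in natural correspondence with choices of norming functionals.

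For $(1)\Leftrightarrow(2)$, I would argue as follows. Assume $\cX$ is smooth. Given any semi-inner product $[\cdot,\cdot]$ generating $\|\cdot\|$, the above observation shows that $x_0^{\star_\cX}$ must coincide with the unique norming functional at each $x_0\in S_\cX$, and extension by the homogeneity rule $(\la x)^{\star_\cX}=\ov{\la}x^{\star_\cX}$ together with (SIP1) reconstructs $[\cdot,\cdot]$ uniquely from the $\star_\cX$-operation, giving (1). Conversely, if $\cX$ is not smooth, pick $x_0\in S_\cX$ with two distinct norming functionals $\varphi_1,\varphi_2$. I would then build two generating semi-inner products by selecting a Hamel basis (or more carefully, using the standard construction via maps $\cX\to \cX'$, $x\mapsto\psi_x$ with $\|\psi_x\|=\|x\|=\psi_x(x)/\|x\|$) that differ only on the ray through $x_0$, violating (1).

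For $(2)\Leftrightarrow(3)$, the key analytic input is the identity
\[
\re[y,x]=\|x\|\cdot\tau(x,y),\qquad x\in S_\cX,
\]
where $\tau(x,y)$ is one of the one-sided Gateaux derivatives of the norm at $x$ in the direction $y$; this is derived by writing $\|x+ty\|^2=[x+ty,x+ty]$, expanding via (SIP1), applying (SIP4) to the terms containing $[x,x+ty]$ and squeezing. From here, the continuity hypothesis in (3) translates exactly to the two one-sided derivatives agreeing, i.e., to Gateaux differentiability of $\|\cdot\|$ on $S_\cX$. A standard fact (the \v{S}mulian lemma in its elementary real form) identifies Gateaux differentiability of the norm with uniqueness of the norming functional, i.e., with smoothness (2). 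Conversely, if (2) holds, the unique supporting functional at each $x\in S_\cX$ forces both one-sided derivatives to equal $\re[y,x]$, and standard manipulation yields the continuity condition.

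The main obstacle I expect is the careful derivation of the squeeze inequality
\[
\tau_-(x,y)\leq \frac{\re[y,x]}{\|x\|}\leq \tau_+(x,y),
\]
and the interchange of $\lim$ and $\re$ needed to pass from norm-differentiability back to the convergence statement in the definition of continuous semi-inner product. The other two implications are essentially bookkeeping: matching norming functionals with $\star_\cX$-images. In writing the final proof I would also be careful to state which parts use only (SIP1)--(SIP4) versus Banach space completeness, since only $(2)\Leftrightarrow(3)$ makes essential use of the norm topology in the way indicated by the convergence $\re([y,x+ty])\to\re([y,x])$.
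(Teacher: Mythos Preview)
Your proposal is correct and follows essentially the approach of Giles \cite{G67}, which is exactly what the paper cites in lieu of a proof (the paper does not give its own argument for this theorem, only the references ``\cite[Theorems 1 and 3]{G67} and their proofs as well as \cite[Theorem 5.4.17]{M98}''). Your identification of the $\star_\cX$-image as a support functional and the squeeze between the one-sided derivatives is precisely Giles' method.

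Two small remarks. First, the equivalence of G\^{a}teaux differentiability of the norm and smoothness (uniqueness of the norming functional) is elementary and does not require \v{S}mulian's lemma; the latter name is normally reserved for the Fr\'{e}chet case, so you may wish to avoid that attribution. Second, your closing comment about completeness is slightly off: none of the three implications actually uses Banach space completeness---Giles works throughout in normed spaces, and the convergence in the definition of a continuous semi-inner product is a pointwise statement on $S_\cX\times S_\cX$ that needs only the norm topology, not completeness. The theorem is stated for Banach spaces in the paper simply because that is the ambient setting.
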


The above theorem does not provide a way to determine the semi-inner product on $\cX$. For this one needs to differentiate the norm of $\cX$. For a given function $h:\cX\to \BC$, the {\em G\^{a}teaux derivative} (or {\em G-derivative} for short) of $h$ at a point $x\in\cX$ in direction $y\in\cX$ is given by
\[
D h(x,y)=\lim_{\BR\ni t\to 0}\frac{h(x+ty)- h(x)}{t}
\]  
in case the limit exist. We say that $h$ is {\em G-differentiable at $x$ in direction $y$} whenever $Dh(x,y)$ exists, $h$ is called {\em G-differentiable at $x$} if it is G-differentiable at $x$ in each direction $y\in\cX$, and $h$ is called {\em G-differentiable} if it is G-differentiable at each $x\in\cX$. Note that the limit in the definition of $Dh(x,y)$ is only over real values of $t$, despite $\cX$ being a complex Banach space. Whenever the limit in the definition of $Dh(x,y)$ is uniform in $(x,y)\in \cX\times \cX$, then we call $h$ {\em uniformly Fr\'{e}chet differentiable}; note that by \cite[Proposition 5.3.4]{AH05}, uniformly Fr\'{e}chet differentiability implies Fr\'{e}chet differentiability.  

Of specific interest in Banach space geometry, are Banach spaces for which the norm is G-differentiable (resp.\ uniformly Fr\'{e}chet differentiable). If that is the case, the Banach space is called {\em G-differentiable} (resp.\ {\em uniformly Fr\'{e}chet differentiable}).

We can now provide another criterion for uniqueness of the semi-inner product, which does provide a formula for the (unique) semi-inner product. 

\begin{theorem}\label{Dnorm}
A Banach space $(\cX,\|\cdot\|)$ has a unique semi-inner product that generates its norm if and only if it is G-differentiable. In that case, the unique semi-inner product on $\cX$ that generates its norm is given by 
\[
[x,y]=\|y\|\left(D\|\cdot\|(y,x)+i D\|\cdot\|(iy,x)\right), \quad x,y\in\cX.
\]
\end{theorem}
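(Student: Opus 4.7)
My plan is to combine Theorem \ref{T:SIPunique1} with the classical equivalence between smoothness and G-differentiability of the norm, and then to derive the formula by applying the duality estimate $\|\varphi\|\cdot\|z\| \geq |\varphi(z)|$ with $\varphi = y^{\star_\cX}$ and $z = y + tx$.

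For the first claim, I would invoke Theorem \ref{T:SIPunique1} to replace uniqueness of the generating semi-inner product by smoothness of $\cX$. The equivalence of smoothness and G-differentiability of the norm is then a classical result (see, e.g., \cite[Thm.~5.4.17]{M98}): in a smooth Banach space every $y \in S_\cX$ has a unique norm-one support functional, and this functional is precisely what the two-sided limit defining the G-derivative of the norm at $y$ identifies; conversely, existence of the G-derivative at each $y$ in every direction pins down the real part of any norm-one support functional at $y$, which in the complex setting determines the functional uniquely.

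For the formula, assume $\cX$ is G-differentiable; the case $y=0$ being trivial, fix $y \neq 0$ and set $\varphi_y = y^{\star_\cX}$, so that $[x,y] = \varphi_y(x)$, $\varphi_y(y) = \|y\|^2$, and $\|\varphi_y\| = \|y\|$. For any $x \in \cX$ and $t \in \BR$, the duality inequality then gives
\begin{equation*}
\|y\|^2\, \|y+tx\|^2 \;\geq\; \bigl|\|y\|^2 + t\varphi_y(x)\bigr|^2 \;\geq\; \bigl(\|y\|^2 + t\,\re\varphi_y(x)\bigr)^2.
\end{equation*}
Subtracting $\|y\|^4$, dividing by $t$, and sending $t \to 0^+$ and $t \to 0^-$ separately, while using that $\tfrac{d}{dt}\|y+tx\|^2$ at $t=0$ equals $2\|y\|\, D\|\cdot\|(y,x)$ by the chain rule, I expect the one-sided estimates to collapse to the equality $\re[x,y] = \|y\|\, D\|\cdot\|(y,x)$.

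To extract the imaginary part, I would apply the same identity with $y$ replaced by $iy$. Since $\|iy\| = \|y\|$ and (SIP2) gives $[x,iy] = \bar{i}[x,y] = -i[x,y]$, one obtains $\re[x,iy] = \im[x,y]$, whence $\im[x,y] = \|y\|\, D\|\cdot\|(iy,x)$; assembling the real and imaginary parts produces the stated formula. The main technical subtlety is the passage from the one-sided inequalities to equality: this is exactly where the two-sided limit in the definition of $D\|\cdot\|$ is essential, as it forces the upper and lower bounds extracted from the duality inequality to agree, and it is also where the restriction to real $t$ in the G-derivative (despite $\cX$ being a complex Banach space) plays its proper role.
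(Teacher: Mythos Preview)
Your proposal is correct. The paper's own proof is simply a citation to \cite[Theorem 3]{G67} and its proof, and your argument---reducing uniqueness to smoothness via Theorem~\ref{T:SIPunique1}, then to G-differentiability via the classical equivalence, and deriving the formula from the duality inequality $\|\varphi_y\|\,\|y+tx\|\ge|\varphi_y(y+tx)|$ by a two-sided limit in $t$---is precisely the standard Giles reasoning that the paper defers to.
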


The first claim is \cite[Theorem 3]{G67}; the formula for the unique semi-inner product follows from the proof in \cite{G67}. 

In particular, the previous two theorems show that the (unique) semi-inner product of a Banach space is continuous if and only if the norm is G-differentiable. It is also the case that the (unique) semi-inner product of a Banach space is uniformly continuous if and only if the norm is uniformly Fr\'{e}chet differentiable \cite[Theorem 3]{G67}. 

\subsection{Further improvements of the semi-inner product}

In the case of a smooth Banach space $\cX$, or, equivalently, when there exists a continuous semi-inner product generating the norm, variations on orthogonality defined via the norm and semi-inner product also coincide. Recall from \cite{G67} that for $x,y\in\cX$, we say that $x$ is {\em normal} to $y$ and $y$ is {\em transversal} to $x$ if $[y,x]=0$. Note that normality and transversality are not the same, since $[y,x]$ need not be equal to the conjugate of $[x,y]$. Birkhoff \cite{Birkhoff} and James \cite{James} considered a notion of orthogonality in normed spaces defined as:
\[
\mbox{$x$ is {\em orthogonal} to $y$ if } \|x+\la y\|\geq \|x\| \mbox{ for each $\la\in\BC$}.
\]
The following result was proved in \cite[Theorem 2]{G67}.

\begin{theorem}\label{normal}
Let $(\cX,\|\cdot\|)$ be a smooth Banach space. Then for all $x,y\in\cX$,  $x$ is normal to $y$ if and only if $x$ is orthogonal to $y$. 
\end{theorem}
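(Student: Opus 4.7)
The plan is to prove the two directions separately, with the forward direction (normal implies orthogonal) being a direct consequence of the Cauchy--Schwarz inequality (SIP4) and not requiring smoothness at all, and the reverse direction (orthogonal implies normal) being where smoothness enters via the G-differentiability formula of Theorem \ref{Dnorm}.

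For the direction "normal implies orthogonal," I would fix $\la\in\BC$ and write, using the linearity of the semi-inner product in the first slot (SIP1) and the assumption $[y,x]=0$,
\[
\|x\|^2=[x,x]=[x+\la y,x]-\la[y,x]=[x+\la y,x].
\]
Applying (SIP4) then yields $\|x\|^4=|[x+\la y,x]|^2\leq \|x+\la y\|^2\|x\|^2$, whence $\|x\|\leq\|x+\la y\|$. The case $x=0$ is trivial. This step uses none of the smoothness hypothesis.

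For the reverse direction, assume $\|x+\la y\|\geq \|x\|$ for all $\la\in\BC$. The case $x=0$ again gives $[y,x]=0$ automatically, so assume $x\neq 0$. Since $\cX$ is smooth, Theorem \ref{T:SIPunique1} tells us the semi-inner product is the unique one generating the norm, and Theorem \ref{Dnorm} gives the explicit formula
\[
[y,x]=\|x\|\bigl(D\|\cdot\|(x,y)+iD\|\cdot\|(ix,y)\bigr).
\]
Restricting $\la$ to real values $t$ in the orthogonality hypothesis shows that $t=0$ is a minimum of the real-valued function $t\mapsto\|x+ty\|$, so its G-derivative $D\|\cdot\|(x,y)$ at $0$ vanishes. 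For the second term, the key observation is the scalar identity
\[
\|ix+ty\|=\|i(x-ity)\|=\|x+(-it)y\|,
\]
so taking $\la=-it$ in the orthogonality hypothesis gives $\|ix+ty\|\geq\|x\|=\|ix\|$ for all real $t$, hence $D\|\cdot\|(ix,y)=0$. Combined, both terms in the formula for $[y,x]$ vanish.

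The main subtlety lies in the reverse direction, specifically in handling the imaginary part of the semi-inner product, since the G-derivative in Theorem \ref{Dnorm} is taken only over real perturbations even though the orthogonality hypothesis ranges over all complex $\la$. The trick is precisely to use the scalar identity $\|ix+ty\|=\|x+(-it)y\|$ to convert the $D\|\cdot\|(ix,y)$ term into an application of orthogonality along the purely imaginary direction. Once this observation is made, the rest of the argument is a direct application of the differentiability formula together with the elementary fact that an interior minimum of a G-differentiable real function has vanishing derivative.
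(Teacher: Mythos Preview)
Your proof is correct. The forward direction is exactly the standard argument via (SIP4), and your reverse direction correctly exploits the G-differentiability formula from Theorem~\ref{Dnorm} together with the identity $\|ix+ty\|=\|x+(-it)y\|$ to kill both the real and imaginary parts of $[y,x]$.

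As for comparison with the paper: the paper does not actually prove this result but simply cites \cite[Theorem~2]{G67}. Giles' original argument is slightly different in flavor for the reverse direction: rather than invoking the explicit G-derivative formula, he works directly with the right- and left-hand derivatives of $t\mapsto\|x+ty\|$ and uses that smoothness (equivalently, continuity of the semi-inner product, Theorem~\ref{T:SIPunique1}(3)) forces these one-sided derivatives to agree, hence vanish at the minimum. Your route via Theorem~\ref{Dnorm} packages the same analytic content more cleanly, since once the formula $[y,x]=\|x\|\bigl(D\|\cdot\|(x,y)+iD\|\cdot\|(ix,y)\bigr)$ is available, the argument reduces to two applications of ``derivative vanishes at an interior minimum.'' Both approaches are short; yours has the advantage of making transparent exactly where the complex scalar $\lambda$ is used (only along the real and imaginary axes).
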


A Banach space $(\cX,\|\cdot\|)$ is called {\em uniformly convex} if for every $\vep>0$ there exists a $\de>0$ such that for all $x,y\in S_\cX$ we have $\|\frac{x+y}{2}\|\leq 1-\de$ whenever $\|x-y\|>\vep$. Also, $\cX$ is called {\em strictly convex} whenever $\|x\|+\|y\|=\|x+y\|$ for $x,y\in\cX\backslash\{0\}$ implies that $y=\la x$ for some $\la>0$. Any uniformly convex Banach space is also strictly convex.

Recall that a Banach space that is either uniformly convex or uniformly Fr\'{e}chet differentiable must be reflexive \cite[Theorem 9.11]{FHHMZ11}. In case the Banach space is uniformly convex as well as smooth, we also have an analogue of the Riesz representation theorem for the semi-inner product \cite[Theorem 6]{G67}. 

\begin{theorem}\label{T:Riesz}
Let $(\cX,\|\cdot\|)$ be a uniformly convex and smooth Banach space. Then for each $\varphi\in\cX'$ there exists a unique $y\in\cX$ such that $\varphi(x)=[x,y]$ for all $x\in\cX$. Moreover, we have $\|\varphi\|_{\cX'}=\|y\|_\cX$.
\end{theorem}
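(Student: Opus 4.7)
The plan is to show that the map $y\mapsto y^{\star_\cX}$ defined in \eqref{Star} is an isometric bijection from $\cX$ onto $\cX'$. The inequality $|[x,y]|\le\|x\|\,\|y\|$ from (SIP4) combined with $[y,y]=\|y\|^2$ immediately gives $\|y^{\star_\cX}\|_{\cX'}=\|y\|_\cX$, so this map is an isometry, which at once yields injectivity and the norm-equality claim of the theorem. The substantive task is surjectivity: given $\varphi\in\cX'$, produce some $y\in\cX$ with $\varphi(x)=[x,y]$ for all $x\in\cX$.

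For existence, assume $\varphi\neq 0$ (otherwise take $y=0$) and choose $x_0$ with $\varphi(x_0)\neq 0$. The idea is to minimize $\|x_0-z\|$ over $z\in\ker\varphi$. Since uniform convexity implies reflexivity and $\ker\varphi$ is a closed convex set, the infimum is attained; strict convexity (a consequence of uniform convexity) guarantees that the minimizer $z_0$ is unique. Set $y_0:=x_0-z_0$, which is nonzero and satisfies $\varphi(y_0)=\varphi(x_0)\neq 0$. For every $u\in\ker\varphi$ and $\la\in\BC$ the element $z_0-\la u$ still lies in $\ker\varphi$, hence $\|y_0\|=\|x_0-z_0\|\le\|x_0-(z_0-\la u)\|=\|y_0+\la u\|$, so $y_0$ is Birkhoff--James orthogonal to every $u\in\ker\varphi$.

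Smoothness now kicks in: Theorem \ref{normal} converts orthogonality into normality with respect to $[\cdot,\cdot]$, giving $[u,y_0]=0$ for all $u\in\ker\varphi$. Both $\varphi$ and $u\mapsto[u,y_0]$ therefore annihilate the codimension-one subspace $\ker\varphi$, so they must be proportional: $[u,y_0]=c\,\varphi(u)$ for some $c\in\BC$ and all $u\in\cX$. Evaluating at $u=y_0$ forces $c=\|y_0\|^2/\varphi(y_0)$, and setting $y:=\ov{c}^{\,-1}y_0$ and using the conjugate-homogeneity of $[\cdot,\cdot]$ in the second slot produces $\varphi(x)=[x,y]$ for every $x\in\cX$.

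Uniqueness is a short strict-convexity argument: if $y,y'$ both represent $\varphi$, then the isometry property already established gives $\|y\|=\|y'\|=\|\varphi\|$, while $\varphi(y+y')=[y,y]+[y',y']=2\|y\|^2$, so $\|y+y'\|\ge |\varphi(y+y')|/\|\varphi\|=2\|y\|=\|y\|+\|y'\|$; strict convexity then forces $y=y'$. The main obstacle in the plan is the bridge from metric minimization to the semi-inner product, which is where Theorem \ref{normal}---hence smoothness---is indispensable: without it the Birkhoff--James orthogonality of $y_0$ to $\ker\varphi$ carries no algebraic information about $[\cdot,\cdot]$, and the whole scheme collapses.
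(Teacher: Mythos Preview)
The paper does not supply its own proof of this theorem; it is stated with a citation to \cite[Theorem 6]{G67} and only the remark that the representing vector $y$ must satisfy $y^{\star_\cX}=\varphi$ is added afterwards. Your argument is correct and is essentially the classical one (and in particular the one Giles uses): find a nearest point in $\ker\varphi$ via reflexivity/uniform convexity, obtain Birkhoff--James orthogonality of the difference vector to $\ker\varphi$, translate this into normality using smoothness (Theorem~\ref{normal}), and conclude that the induced functional $[\,\cdot\,,y_0]$ is a scalar multiple of $\varphi$. The isometry and uniqueness parts are handled cleanly. There is nothing to correct.
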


From the uniqueness of the semi-inner product, due to the uniform convexity assumption, it is clear that the vector $y$ must be such that $y^{\star_\cX}=\varphi$. In other words, when the Banach space $\cX$ is uniformly convex and smooth, then  the theorem states that the map $x\mapsto x^{\star_\cX}$ is an isometric bijection mapping $\cX$ onto $\cX'$. 

Finally, in case the smoothness condition in Theorem \ref{T:Riesz} is replaced by the stronger condition that the (unique) semi-inner product is uniformly continuous, then an even more complete duality between $\cX$ and $\cX'$ as semi-inner product spaces is obtained \cite[Theorem 7]{G67}.

\begin{theorem}\label{T:Duality}
Let $(\cX,\|\cdot\|)$ be a uniformly convex Banach space that admits a uniformly continuous semi-inner product. Then $\cX'$ is also uniformly convex with a uniformly continuous semi-inner product which is defined by $[y^{\star_\cX},x^{\star_\cX}]_{\cX'}=[x,y]_{\cX}$.     
\end{theorem}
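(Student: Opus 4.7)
The plan is to reduce the theorem to classical Banach space geometry together with a short identification via reflexivity. As a preliminary observation, recall from the discussion immediately after Theorem \ref{Dnorm} that uniform continuity of $[\cdot,\cdot]_\cX$ is equivalent to the norm of $\cX$ being uniformly Fr\'echet differentiable. Hence the standing hypothesis can be reformulated as: $\cX$ is simultaneously uniformly convex and uniformly Fr\'echet differentiable.

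The main step, which I expect to be the main obstacle, is to transfer both of these properties to $\cX'$. I would invoke the classical \u{S}mulian/Lindenstrauss-type duality from Banach space geometry: a Banach space $\cY$ is uniformly convex if and only if $\cY'$ is uniformly Fr\'echet differentiable, and vice versa (see, e.g., \cite{FHHMZ11}). This is standard but is the only deep external input. Granting it, $\cX'$ is itself both uniformly convex and uniformly Fr\'echet differentiable, so Theorem \ref{T:SIPunique1} together with its uniform continuity refinement (noted after Theorem \ref{Dnorm}) yields a unique uniformly continuous semi-inner product $[\cdot,\cdot]_{\cX'}$ generating the norm of $\cX'$. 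This settles uniform convexity of $\cX'$ and the existence of a uniformly continuous SIP; what remains is to check the stated formula.

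For the formula, I would use that $\cX$ is reflexive (being uniformly convex), and let $\iota:\cX\to\cX''$, $\iota(x)(\varphi)=\varphi(x)$, denote the canonical isometric isomorphism. The key auxiliary fact is that in any smooth Banach space $\cY$, the element $z^{\star_\cY}\in\cY'$ is characterized uniquely as the functional $\psi\in\cY'$ with $\|\psi\|_{\cY'}=\|z\|_\cY$ and $\psi(z)=\|z\|_\cY^2$; existence follows from \eqref{Star} together with (SIP4), and uniqueness from smoothness of $\cY$. Applying this inside the smooth space $\cX'$ with $z=x^{\star_\cX}$, I would verify that $\iota(x)\in\cX''$ satisfies both characterizing conditions: $\|\iota(x)\|_{\cX''}=\|x\|_\cX=\|x^{\star_\cX}\|_{\cX'}$ and
\[
\iota(x)(x^{\star_\cX})=x^{\star_\cX}(x)=[x,x]_\cX=\|x\|_\cX^2.
\]
Uniqueness then forces $(x^{\star_\cX})^{\star_{\cX'}}=\iota(x)$, and the formula drops out from \eqref{SIPvsDP} applied in $\cX'$:
\[
[y^{\star_\cX},x^{\star_\cX}]_{\cX'}=\inn{y^{\star_\cX}}{(x^{\star_\cX})^{\star_{\cX'}}}_{\cX',\cX''}=\iota(x)(y^{\star_\cX})=y^{\star_\cX}(x)=[x,y]_\cX.
\]
Thus, after the geometric duality in the second paragraph is in hand, everything else is bookkeeping with the $\star$-calculus developed in Subsection \ref{SubS:SIP}.
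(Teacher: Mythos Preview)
The paper does not give its own proof of this theorem; it is simply quoted as \cite[Theorem 7]{G67}. Your argument is correct and in fact reconstructs the standard proof: the \u{S}mulian-type duality (uniform convexity of $\cY$ $\Leftrightarrow$ uniform Fr\'echet differentiability of $\cY'$) transfers both geometric properties to $\cX'$, and your identification $(x^{\star_\cX})^{\star_{\cX'}}=\iota(x)$ via the smoothness characterization of the duality map is exactly the right mechanism for the formula.

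One point you should make explicit: the displayed formula $[y^{\star_\cX},x^{\star_\cX}]_{\cX'}=[x,y]_\cX$ only prescribes values of $[\cdot,\cdot]_{\cX'}$ on the \emph{range} of the map $x\mapsto x^{\star_\cX}$. For the formula to actually define the semi-inner product on all of $\cX'$, you need this map to be surjective. That is supplied by Theorem~\ref{T:Riesz}, which applies here since $\cX$ is uniformly convex and smooth; it would strengthen the write-up to cite it at that point. With that addition the proposal is complete.
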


In terms of the $\star$-operations, under the condition in the theorem, $\cX$ is reflexive and subject to the identification $\cX\simeq \cX''$,  
the theorem implies that the inverse map of $x\mapsto x^{\star_\cX}$ is given by the $\star$-operation on $\cX'$, i.e., $(x^{\star_\cX})^{\star_{\cX'}}=x$ and $(\varphi^{\star_{\cX'}})^{\star_{\cX}}=\varphi
$ for all $x\in\cX$ and $\varphi\in\cX'$. Indeed, for the first identity note that for all $x,y\in\cX$ we have
\begin{align*}
[x,y]_\cX
&=[y^{\star_\cX}, x^{\star_\cX}]_{\cX'} =\inn{y^{\star_\cX}}{(x^{\star_\cX})^{\star_{\cX'}}}_{\cX',\cX''}\\ 
&= \inn{(x^{\star_\cX})^{\star_{\cX'}}}{y^{\star_\cX}}_{\cX,\cX'} =[(x^{\star_\cX})^{\star_{\cX'}},y]_\cX, 
\end{align*}
so that for all $x,y\in\cX$ we have
\[
\inn{x-(x^{\star_\cX})^{\star_{\cX'}}}{y^{\star_\cX}}=[x-(x^{\star_\cX})^{\star_{\cX'}},y]_\cX=0
\]
and we can conclude that $x=(x^{\star_\cX})^{\star_{\cX'}}$. The second identity follows by reflexivity. In conclusion, we have the following result. 

\begin{corollary}
Let $(\cX,\|\cdot\|)$ be a uniformly convex Banach space that admits a uniformly continuous semi-inner product, so that $\cX$ is reflexive. Then the map $x\mapsto x^{\star_\cX}$ is an isometric bijection mapping $\cX$ onto $\cX'$ whose inverse map is given by $\varphi\mapsto \varphi^{\star_{\cX'}}$. 
\end{corollary}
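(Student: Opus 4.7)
The plan is to combine Theorems \ref{T:Riesz} and \ref{T:Duality} with the reflexivity that follows from uniform convexity; most of the computational content has already been laid out in the paragraph immediately preceding the corollary, and I would essentially re-organize it into a clean proof. First, I observe that a uniformly continuous semi-inner product is in particular continuous, so by Theorem \ref{T:SIPunique1} the space $\cX$ is smooth; together with uniform convexity this puts us in the setting of Theorem \ref{T:Riesz}, yielding that $x\mapsto x^{\star_\cX}$ is an isometric bijection from $\cX$ onto $\cX'$. Similarly, Theorem \ref{T:Duality} guarantees that $\cX'$ itself is uniformly convex and admits a uniformly continuous (hence continuous) semi-inner product, so the same argument applied to $\cX'$ gives that $\varphi\mapsto \varphi^{\star_{\cX'}}$ is an isometric bijection from $\cX'$ onto $\cX''$. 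Uniform convexity implies reflexivity, so I fix the canonical identification $\cX\simeq \cX''$ and regard $\varphi^{\star_{\cX'}}$ as an element of $\cX$.

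The substantive step is to verify that these two bijections are mutual inverses. For the identity $(x^{\star_\cX})^{\star_{\cX'}}=x$ I would follow exactly the computation displayed before the corollary: starting from $[y^{\star_\cX},x^{\star_\cX}]_{\cX'}=[x,y]_\cX$ from Theorem \ref{T:Duality} and applying \eqref{SIPvsDP} once in $\cX'$ and once in $\cX$ yields $[(x^{\star_\cX})^{\star_{\cX'}},y]_\cX=[x,y]_\cX$ for every $y\in\cX$, so that the uniqueness portion of Theorem \ref{T:Riesz} (or equivalently the surjectivity of the $\star_\cX$-map together with \eqref{SIPvsDP}) forces $x=(x^{\star_\cX})^{\star_{\cX'}}$. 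The reverse identity $(\varphi^{\star_{\cX'}})^{\star_{\cX}}=\varphi$ then follows at once by symmetry, by running the same argument with the roles of $\cX$ and $\cX'$ exchanged and invoking reflexivity once more.

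No step is a genuine obstacle; the only point that needs a little care is the bookkeeping of the canonical identification $\cX\simeq \cX''$, so that $\varphi^{\star_{\cX'}}$, which a priori lives in $\cX''$, is legitimately compared with an element of $\cX$. Once that identification is fixed, the proof reduces to the short chain of equalities already presented in the excerpt together with the surjectivity statements supplied by Theorems \ref{T:Riesz} and \ref{T:Duality}.
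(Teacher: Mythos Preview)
Your proposal is correct and follows essentially the same approach as the paper: the argument given in the paragraph immediately preceding the corollary \emph{is} the paper's proof, and you reproduce that computation verbatim, adding only the explicit observation (via Theorem~\ref{T:SIPunique1}) that a uniformly continuous semi-inner product is continuous so that Theorem~\ref{T:Riesz} applies. Your extra care in first establishing that $\varphi\mapsto\varphi^{\star_{\cX'}}$ is itself an isometric bijection onto $\cX''\simeq\cX$ is a minor organizational improvement but not a substantive departure.
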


Next we briefly return to the examples of the earlier subsection.

\begin{example}\label{Ex:Lp2}
For $1<p<\infty$, the Banach space $L^p=L^p(\Om,\cF,\mu)$ of Example \ref{Ex:Lp} has a very well behaved Banach space geometry. Indeed, $L^p$ is uniformly convex by \cite[Theorem 9.3]{FHHMZ11} and uniformly Fr\'{e}chet differentiable by \cite[Fact 9.7 and Theorem 9.10]{FHHMZ11} in case $\mu$ is a $\si$-finite measure.     
\end{example}

\subsection{Semi-inner product reproducing kernel Banach spaces} 

With the above preliminaries on semi-inner products out of the way, we turn to reproducing kernel Banach spaces. A Banach space $\cX$ is called a {\em Banach space of functions} on a set $\Om$ if each $f\in\cX$ is a function on $\Om$ and $\|f\|=0$ if and only if $f(z)=0$ for each $z\in\Om$. Hence, point evaluation separates the elements of the Banach space. Following \cite[Definition 1]{ZXZ09}, for a reproducing kernel Banach space one also requires point evaluation to be continuous, and the dual space should be isometrically isomorphic to a Banach space of functions on $\Om$ with the same property. 

\begin{definition}\label{D:RKBS}
A {\em reproducing kernel Banach space} (RKBS for short) is a Banach space of functions $\cX$ on a set $\Om$ such that the dual space $\cX'$ is isometrically isomorphic to a Banach space $\wtil{\cX}$ of functions on $\Om$ and point evaluation is continuous both in $\cX$ and in $\wtil{\cX}$.
\end{definition}

It will be useful to consider an example. 

\begin{example}\label{HpD} Let $1<p<\infty$, $\BD=\{ z \in \BC: |z|<1\}$ and $\BT=\{ z \in \BC: |z|=1\}$. The Hardy space $H^p(\BD)$ is the space of analytic functions $f$ on $\BD$ so that
$$ \| f \|_{H^p} = \sup_{0<r<1} \left( \frac{1}{2\pi} \int_0^{2\pi} | f(re^{it})|^p dt\right)^{\frac1p} < \infty. $$ By taking nontangential boundary limits, one may view $H^p(\BD)$ as a closed subspace of $L^p(\BT)$; for details, see, e.g., \cite{Hoffman}. For $f \in L^p(\BT)$ we let $f(z) =\sum_{j=-\infty}^\infty \hat{f}(j) z^j$ be its Fourier series and $ \hat{f}(j)$, $j\in\BZ$, its Fourier coefficients. Using Fourier coefficients, the Hardy space $H^p(\BD)$ corresponds to the closed subspace of $L^p(\BT)$ consisting of functions $f$ so that $ \hat{f}(j)=0$, $j<0$.  
We will also be using the subspaces
$$ \overline{H^p(\BD)} = \{ f \in L^p(\BT) :  \hat{f}(j)=0, j > 0 \} , zH^p(\BD) = \{ f \in L^p(\BT) :  \hat{f}(j)=0, j \le 0 \}, $$
and view the functions $f \in \overline{H^p(\BD)}$ as co-analytic on the unit disk $\BD$ (i.e., analytic in $\overline{z}$). Thus both $H^p(\BD)$ and $ \overline{H^p(\BD)}$ are Banach spaces of functions on $\BD$.

In order to establish $H^p(\BD) $ as a RKBS we need to view its dual as a Banach space of functions. It is well known (see, e.g., \cite[Chapter 8]{D70}, \cite[Chapter VII]{Koosis}, \cite{Hensgen}) that the dual of $H^p(\BD) $ can be identified as the quotient space $H^p(\BD)'\simeq L^q(\BT)/zH^q(\BD)$, where as usual $\frac1p+\frac1q=1$. Indeed, if we have a linear functional $\varphi \in H^p(\BD)'$, it can be extended (by the Hahn-Banach theorem) to all of $L^p(\BT)$, and therefore it is of the form
$$ \varphi(f) =\varphi_g(f) = \langle f , g \rangle_{L^p(\BT),L^q(\BT)}= \frac{1}{2\pi} \int_0^{2\pi} f(e^{it}) g(e^{it})dt ,\quad f \in H^p(\BD),$$
for some $g \in L^q(\BT)$. The choice of $g$ is not unique as 
\begin{equation}\label{orth} \langle f , h \rangle_{L^p(\BT),L^q(\BT)}=0,\quad f\in H^p(\BD), h \in zH^q(\BD),\end{equation} thus leading to $\varphi_g=\varphi_k$ whenever $k\in g+zH^q(\BD)$. In order to establish $H^p(\BD) $ as a RKBS we will isometrically identify $H^p(\BD)'$ with $\widetilde{H^p(\BD)}$, defined by $ \widetilde{H^p(\BD)}:= \{ g \in L^q(\BT) :  \hat{g}(j)=0, j > 0 \}$ with $\| g \|_{\widetilde{H^p}(\BD)}:= \| \varphi_g \|_{H^p(\BD)'}$. 
In other words, $ \widetilde{H^p(\BD)}$ consists exactly of the functions $g \in \overline{H^q(\BD)}$, but its norm is given by the norm of the linear functional $\varphi_g$ (which is, in general, not equal to the $\overline{H^q(\BD)}$ norm of $g$). We will return to this example (in the multivariable setting) in Subsection \ref{HpB}. 
\hfill $\Box$
\end{example}

Following similar arguments as in the case of a reproducing kernel Hilbert space \cite{aron}, the above definition is strong enough to prove the existence of a reproducing kernel.

\begin{theorem}\label{repkernel} \cite[Theorem 2]{ZXZ09}
Let $\cX$ be a reproducing kernel Banach space on a set $\Om$. Then there exists a unique function $K:\Om \times \Om \to \BC$ such that:
\begin{itemize}
\item[(1)] For every $z\in\Om$, $K(\cdot,z)\in\wtil{\cX}$ and $f(z)=K(\cdot,z)(f)$ for all $f\in \cX$;

\item[(2)] For every $z\in\Om$, $K(z,\cdot)\in\cX$ and $\wtil{f}(z)=f'(K(z,\cdot))$ for any $\wtil{f}\in\wtil{\cX}$ with $f'\in\cX'$ the dual element associated with $\wtil{f}$ via the isometry between $\cX'$ and $\wtil{\cX}$.%\footnote{StH: We should explain better somewhere how this works.}

\item[(3)] The span of $\{K(z,\cdot)\colon z\in\Om\}$ is dense in $\cX$ and the span of $\{K(\cdot,z)\colon z\in\Om\}$ is dense in $\wtil{\cX}$.

\item[(4)] For all $z,w\in\Om$ we have
\[
K(z,w)=K(\cdot,w)(K(z,\cdot))=\inn{K(z,\cdot)}{K(\cdot,w)}_{\cX,\cX'}.
\]
\end{itemize}
\end{theorem}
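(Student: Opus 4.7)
The plan is to build the kernel $K$ by transporting the point-evaluation functionals on $\cX$ and on $\wtil{\cX}$ through the isometric isomorphism $\iota\colon\cX'\to\wtil{\cX}$, and then to deduce (3) and (4) from (1)--(2). For $z\in\Om$, continuity of point evaluation on $\cX$ yields that the evaluation functional $\delta_z\colon f\mapsto f(z)$ lies in $\cX'$; set $K(\cdot,z):=\iota(\delta_z)\in\wtil{\cX}$. Identifying $K(\cdot,z)$ with $\delta_z$ through $\iota^{-1}$ immediately gives $K(\cdot,z)(f)=\delta_z(f)=f(z)$ for every $f\in\cX$, which is (1). Symmetrically, continuity of point evaluation on $\wtil{\cX}$ produces $\wtil{\delta}_z\in\wtil{\cX}'$, so $\wtil{\delta}_z\circ\iota\in\cX''$; the key step is to show that this element is in the canonical image of $\cX$, so that there is a (unique) $K(z,\cdot)\in\cX$ with $\wtil{\delta}_z(\iota(f'))=f'(K(z,\cdot))$ for all $f'\in\cX'$. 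Writing $\wtil{f}=\iota(f')$, this is precisely (2). The standard route is to invoke reflexivity of $\cX$, automatic for the uniformly convex RKBSs considered later by \cite[Theorem 9.11]{FHHMZ11}, in which case $\cX\simeq\cX''$ and the identification is immediate.

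With (1) and (2) in hand, property (3) follows from the Hahn-Banach theorem. If the span of $\{K(z,\cdot):z\in\Om\}$ were not dense in $\cX$, there would exist a nonzero $f'\in\cX'$ annihilating all $K(z,\cdot)$; but (2) yields $\iota(f')(z)=f'(K(z,\cdot))=0$ for every $z\in\Om$, so $\iota(f')$ is the zero function in $\wtil{\cX}$, and, $\wtil{\cX}$ being a Banach space of functions, $\|\iota(f')\|=0$, forcing $f'=0$ by the isometry, a contradiction. The density of $\{K(\cdot,z):z\in\Om\}$ in $\wtil{\cX}$ is obtained symmetrically from (1) together with the fact that $\cX$ is a Banach space of functions. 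For (4), applying (1) to $f=K(z,\cdot)\in\cX$ at the point $w$ gives $K(z,w)=K(z,\cdot)(w)=K(\cdot,w)(K(z,\cdot))$, which, by \eqref{dualpairing} and the identification $K(\cdot,w)\leftrightarrow\iota^{-1}(K(\cdot,w))\in\cX'$, equals $\inn{K(z,\cdot)}{K(\cdot,w)}_{\cX,\cX'}$. Uniqueness of $K$ is then automatic: any other kernel $K'$ satisfying (1) gives $(K(\cdot,z)-K'(\cdot,z))(f)=0$ for all $f\in\cX$, so $K(\cdot,z)=K'(\cdot,z)$ as functionals in $\cX'$ and hence as functions in $\wtil{\cX}$; the same argument applied to (2) handles the second slot.

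The main obstacle is the construction of $K(z,\cdot)$, where $\wtil{\delta}_z\circ\iota\in\cX''$ must be realized as evaluation at some element of $\cX$. In the non-reflexive setting this identification is not automatic and requires extra hypotheses on $\iota$, which is one reason the paper restricts attention to the reflexive range $1<p<\infty$ in the Hardy and Bergman space examples. Everything else is a routine dualization of the standard Hilbert space reproducing kernel argument.
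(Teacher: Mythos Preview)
The paper does not supply its own proof of this theorem; it is quoted from \cite[Theorem~2]{ZXZ09} and prefaced only by the remark that the argument mimics the reproducing kernel Hilbert space case. Your proposal is precisely that standard argument and is essentially correct: construct $K(\cdot,z)$ as $\iota(\delta_z)$, construct $K(z,\cdot)$ by pulling $\wtil{\delta}_z$ back through $\iota$ and the canonical embedding $\cX\hookrightarrow\cX''$, and deduce (3) and (4) from (1)--(2) via Hahn--Banach and substitution. This is the route taken in \cite{ZXZ09}.

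Two comments. First, your identification of reflexivity as the crux is exactly right, and in fact \cite{ZXZ09} builds reflexivity into its Definition~1 of an RKBS, so the step $\wtil{\delta}_z\circ\iota\in\cX''\simeq\cX$ is licensed there. The present paper's Definition~\ref{D:RKBS} omits the word ``reflexive'', which is a minor inconsistency in the exposition rather than a flaw in your argument; in any case all subsequent applications (SIP-RKBS, $H^p$, $A^p$ for $1<p<\infty$) are reflexive, as you note. Second, a small gap worth closing: you define $K(\cdot,z)\in\wtil{\cX}$ and $K(z,\cdot)\in\cX$ separately, but for $K$ to be a single well-defined function on $\Om\times\Om$ one must check consistency, namely $(K(\cdot,b))(a)=(K(a,\cdot))(b)$. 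This is immediate from your construction: both sides equal $\wtil{\delta}_a(\iota(\delta_b))$, the first by evaluating $\iota(\delta_b)$ at $a$, the second by applying the defining property of $K(a,\cdot)$ with $f'=\delta_b$.
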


The function $K$ in the above theorem is called the {\em reproducing kernel} of the RKBS $\cX$. Since the elements of $\cX$ are uniquely determined by their point evaluations, it follows that the reproducing kernel $K$ is uniquely determined. In the case when $\cX=H^p(\BD)$, the reproducing kernel is the Szeg\"o kernel; see Subsection \ref{HpB}.

A more general notion of reproducing kernel Banach spaces is considered in \cite{LZZ22} where the dual space is allowed to be isometrically isomorphic to a Banach space of functions on a different set $\wtil{\Om}$, in which case the reproducing kernel will act on $\Omega \times \wtil{\Omega}$, but we will not pursue that in the present paper.
%\footnote{StH: Depending on how we set things up, we may need two sets. Currently we view $H^{p'}$ as isomorphic to a Banach space of functions on $\BC\backslash \ov{\BD}$, but we could also apply the 'flip' operator to transfer them to functions on $\BD$, I think, though we should be careful that we are still okay with the definition of the pairing, and SIP.}

When $\cX$ is a Banach space of functions on a set $\Om$ with continuous point evaluation, then the challenge is to find another Banach space of functions on $\Om$, with continuous point evaluation, that is isometrically isomorphic to $\cX'$. This is where the semi-inner product plays a role, since it links $\cX'$ to $\cX$ via the $\star_\cX$-operation, provided the Banach space is sufficiently well-behaved.       

\begin{definition}\label{D:SIP-RKBS}
A {\em semi-inner product reproducing kernel Banach space} (SIP-RKBS for short) is a RKBS $\cX$ on a set $\Om$ which is uniformly convex and uniformly Fr\'{e}chet differentiable.
%\footnote{StH: This is the definition in \cite[Page 2753]{ZXZ09}. A question: Can we relax the definition by only assuming $\cX$ is a Banach space of functions on $\cX$ with continuous point evaluation, but not assume this for the space isomorphic to the dual space, as is part of RKBS. In other words, does the existence of a sufficiently nice SIP guarantee that it is a RKBS?}  
\end{definition}

By Theorem \ref{T:Duality}, it follows that the dual of a SIP-RKBS is also a SIP-RKBS. Either by direct proof, as in \cite[Theorem 9]{ZXZ09}, or through the connection between the duality pairing and the semi-inner product, one can prove the following variation on Theorem \ref{repkernel}.

\begin{theorem}\label{T:SIP-RKBS}
Let $\cX$ be a SIP-RKBS on $\Om$. Then there exists a unique function $H:\Om\times \Om\to \BC$ such that $\{H(z,\cdot)\colon z\in\Om \}\subset \cX$ and
\[
f(z)=[f,H(z,\cdot)] \mbox{ for all } f\in\cX,z\in\Om.
\]
Moreover, the reproducing kernel $K$ of the previous section is related through $H$ via $K(\cdot,z)=(H(z,\cdot))^{\star_\cX}$, $z\in\Om$. Furthermore, $\cX'$ (via the identification with $\cX^{\star_\cX}$) is a SIP-RKBS with the unique function $H'$, with above properties of $H$, given by $H'(z,\cdot)=K(z,\cdot)^{\star_\cX}$, $z\in\Om$.
Finally, we have
\[
H(z,w)=[H(z,\cdot),H(w,\cdot)],\quad z,w\in\Omega.
\]
\end{theorem}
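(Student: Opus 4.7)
The plan is to apply the Riesz representation theorem for semi-inner products (Theorem \ref{T:Riesz}) to the point-evaluation functionals, then translate the resulting vectors into the claimed kernel $H$, and finally match up with the kernel $K$ of Theorem \ref{repkernel} via the identity $[y,x]=\inn{y}{x^{\star_\cX}}$ from \eqref{SIPvsDP}. Concretely, because $\cX$ is an SIP-RKBS it is uniformly convex and uniformly Fr\'echet differentiable, hence smooth, so Theorem \ref{T:Riesz} applies. For each $z\in\Om$ point evaluation $\delta_z\in\cX'$, so there is a unique $h_z\in\cX$ with $\delta_z(f)=[f,h_z]$ for all $f\in\cX$. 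Setting $H(z,w):=h_z(w)$ immediately gives a function $H:\Om\times\Om\to\BC$ with $H(z,\cdot)=h_z\in\cX$ and the reproducing identity $f(z)=[f,H(z,\cdot)]$ for all $f\in\cX$. Uniqueness of $H(z,\cdot)$ is inherited from uniqueness in Theorem \ref{T:Riesz}, and $H$ is determined as a function on $\Om\times\Om$ since its elements $H(z,\cdot)$ are functions on $\Om$.

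For the relation $K(\cdot,z)=(H(z,\cdot))^{\star_\cX}$, I apply \eqref{SIPvsDP} to rewrite the reproducing identity as $f(z)=\inn{f}{h_z^{\star_\cX}}_{\cX,\cX'}$, so that the linear functional $\delta_z\in\cX'$ equals $h_z^{\star_\cX}$. On the other hand, by Theorem \ref{repkernel}(1) the element of $\wtil{\cX}$ that corresponds via the isometric identification $\cX'\simeq\wtil{\cX}$ to $\delta_z$ is exactly $K(\cdot,z)$. Under this identification we therefore have $K(\cdot,z)=h_z^{\star_\cX}=(H(z,\cdot))^{\star_\cX}$, as claimed.

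Next, I show that $\cX'$ is itself an SIP-RKBS so that the first part of the theorem can be reapplied. By Theorem \ref{T:Duality}, $\cX'$ is uniformly convex and carries a uniformly continuous semi-inner product, which by Theorems \ref{T:SIPunique1} and \ref{Dnorm} makes the norm of $\cX'$ uniformly Fr\'echet differentiable (upgrading continuity of $D\|\cdot\|$ to uniform continuity); together with the fact that $\wtil{\cX}$ is a Banach space of functions on $\Om$ with continuous point evaluation (part of the definition of RKBS), this puts $\cX'$ into the class of SIP-RKBS. Hence the first part applied to $\cX'$ delivers a unique $H':\Om\times\Om\to\BC$ with $H'(z,\cdot)\in\cX'$ and $\wtil{f}(z)=[\wtil{f},H'(z,\cdot)]_{\cX'}$ for $\wtil{f}\in\wtil{\cX}$. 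To identify $H'(z,\cdot)$ with $K(z,\cdot)^{\star_\cX}$, pick $\wtil{f}\in\wtil{\cX}$ corresponding to $f'=g^{\star_\cX}\in\cX'$ (the representation $f'=g^{\star_\cX}$ exists by Theorem \ref{T:Riesz} applied to $\cX$). By Theorem \ref{repkernel}(2), $\wtil{f}(z)=f'(K(z,\cdot))=[K(z,\cdot),g]_{\cX}$, and Theorem \ref{T:Duality} then gives
\[
[K(z,\cdot),g]_{\cX}=[g^{\star_\cX},K(z,\cdot)^{\star_\cX}]_{\cX'}=[\wtil{f},K(z,\cdot)^{\star_\cX}]_{\cX'}.
\]
Comparing with the defining property of $H'(z,\cdot)$, uniqueness forces $H'(z,\cdot)=K(z,\cdot)^{\star_\cX}$.

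The closing identity $H(z,w)=[H(z,\cdot),H(w,\cdot)]$ is an immediate application of the reproducing formula $f(w)=[f,H(w,\cdot)]$ to $f=H(z,\cdot)\in\cX$, evaluating at the point $w$. The only delicate step in the whole argument is the bookkeeping of the isometric identification $\cX'\simeq\wtil{\cX}\simeq\cX^{\star_\cX}$ together with the non-linearity of $\star_\cX$; once one tracks these identifications carefully using \eqref{SIPvsDP} and Theorem \ref{T:Duality}, every claim reduces to a one-line verification, and I expect no genuine analytic obstacle beyond invoking the already established smoothness and duality results.
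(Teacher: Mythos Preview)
Your proof is correct and follows precisely the route the paper itself suggests: the paper does not give a detailed proof of this theorem, stating only that it follows ``either by direct proof, as in \cite[Theorem 9]{ZXZ09}, or through the connection between the duality pairing and the semi-inner product,'' and your argument carries out the second of these options. One minor remark: when you assert that uniform continuity of the semi-inner product on $\cX'$ yields uniform Fr\'echet differentiability of the norm, the relevant reference is the sentence following Theorem~\ref{Dnorm} (the uniform version of \cite[Theorem 3]{G67}) rather than Theorems~\ref{T:SIPunique1} and~\ref{Dnorm} themselves, which cover only the continuous/G\^ateaux case.
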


Despite creating some possible confusion, we will call the function $H$ in Theorem \ref{T:SIP-RKBS} the {\em SIP-reproducing kernel} of the SIP-RKBS $\cX$.

\subsection{The first representer theorem for SIP-RKBS} 

One of the seminal results on reproducing kernel Banach spaces, which is of great importance in the machine learning literature, is the representer theorem; cf., \cite[Theorems 19 and 23]{ZXZ09}.  In this paper we only consider the first representer theorem (\cite[Theorem 19]{ZXZ09}) and will specify this result to the case of a SIP-RKBS. For the reader's convenience we give a brief sketch of the proof. The setting for the first representer theorem is the following {\em minimal norm interpolation problem}. 

\begin{problem}\label{P:Interpolation} 
Let $\cX$ be a Banach space of functions on a set $\Om$, let 
\[
{\bf z}=\{z_1,\ldots,z_n\}\subset\Om  \ands {\bf s}=\{s_1,\ldots,s_n\}\subset\BC,
\]
with $z_i\neq z_j$ if $i\neq j$. Find an $f\in\cX$ of minimal norm such that $f(z_j)=s_j$ for $j=1,\ldots,n$, if it exists. If so, what is its norm, and when is there a unique solution. 
\end{problem}

In the remainder of this subsection we shall assume $\cX$ to be a SIP-RKBS on a set $\Om$ with SIP-reproducing kernel $H$. There are other variations and more general set-ups; for a recent overview, please see \cite{LZZ22}. 

In order to analyze the above interpolation problem, define the sets
\[
\BI_{\bf z,s}:=\{f\in\cX \colon f(z_j)=s_j,\, j=1,\ldots,n\},\quad
H({\bf z},\cdot)^{\star_\cX}:=\{H(z_j,\cdot)^{\star_\cX} \colon j=1,\ldots,n\}.
\]
Hence, the interpolation problem is to determine whether $\BI_{\bf z,s}$ has a minimal norm element, to find this element and determine whether it is unique. We write $\BI_{\bf z,0}$ in case ${\bf s}=\{0,\ldots,0\}$. Note that for any $x\in \BI_{\bf z,s}$ we have
\begin{equation}\label{Bz0Bzs}
x + \BI_{\bf z,0} = \BI_{\bf z,s}.
\end{equation}
The following lemma provides a criterion on a given set of points ${\bf z}$ under which functions in $\cX$ that satisfy the interpolation conditions exist for each set of values ${\bf s}$.

\begin{lemma}\label{exists} \cite[Lemma 14]{ZXZ09}
Let ${\bf z}=\{z_1,\ldots,z_n\}\subset\Om$. The set $\BI_{\bf z,s}$ is non-empty for each ${\bf s}\in\BC^n$ if and only if the set $H({\bf z},\cdot)^{\star_\cX}$ is linearly independent in $\cX'$.
\end{lemma}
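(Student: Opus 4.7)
The plan is to recognize that $H(z_j,\cdot)^{\star_\cX}$ is exactly the point evaluation functional at $z_j$, and then recast the interpolation problem as the surjectivity of a linear map into $\BC^n$, which is dual to linear independence of those functionals.

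First I would observe that by Theorem \ref{T:SIP-RKBS} we have $H(z,\cdot)^{\star_\cX}=K(\cdot,z)$ for every $z\in\Om$, while by Theorem \ref{repkernel}(1) the action of $K(\cdot,z)$ on $\cX$ is precisely point evaluation: $f(z)=K(\cdot,z)(f)$ for $f\in\cX$. Denote the point evaluation functional at $z_j$ by $\de_j\in\cX'$, so that $\de_j=H(z_j,\cdot)^{\star_\cX}$ for $j=1,\ldots,n$. The interpolation conditions then read $\de_j(f)=s_j$.

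Next I would introduce the evaluation operator $T:\cX\to\BC^n$ defined by
\[
Tf=\bigl(\de_1(f),\ldots,\de_n(f)\bigr)=\bigl(f(z_1),\ldots,f(z_n)\bigr).
\]
Each $\de_j$ is bounded (by Definition \ref{D:RKBS}), so $T$ is a bounded linear map. Observe that $\BI_{{\bf z},{\bf s}}=T^{-1}(\{{\bf s}\})$, and hence $\BI_{{\bf z},{\bf s}}\ne\varnothing$ for every ${\bf s}\in\BC^n$ is equivalent to $T$ being surjective onto $\BC^n$.

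The key step is then the following finite-dimensional duality fact: a linear map into $\BC^n$ is surjective if and only if its range is annihilated only by the zero linear functional on $\BC^n$. A functional $(a_1,\ldots,a_n)\in(\BC^n)'$ annihilates $\ran T$ precisely when $\sum_{j=1}^n a_j f(z_j)=0$ for all $f\in\cX$, i.e.\ when the element $\sum_{j=1}^n a_j\de_j=\sum_{j=1}^n a_j H(z_j,\cdot)^{\star_\cX}\in\cX'$ is the zero functional. Thus $T$ is surjective if and only if $\sum_j a_j H(z_j,\cdot)^{\star_\cX}=0$ forces $a_1=\cdots=a_n=0$, which is exactly linear independence of $H({\bf z},\cdot)^{\star_\cX}$ in $\cX'$.

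There is no real obstacle here; the only subtlety is that in general one must worry about the range being merely dense rather than all of $\BC^n$, but since the codomain is finite-dimensional every linear subspace is automatically closed, so dense range equals full range and the equivalence goes through cleanly. Assembling the three steps completes the proof of the lemma.
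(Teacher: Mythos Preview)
Your argument is correct. The paper does not actually give a proof of this lemma; it simply cites \cite[Lemma 14]{ZXZ09}, so there is nothing to compare against beyond noting that your reduction to surjectivity of the evaluation map $T:\cX\to\BC^n$ and the finite-dimensional duality step is the standard way to establish this fact.
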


Assuming the condition of the previous lemma, the first representer theorem is the following result; see \cite[Theorem 19]{ZXZ09}. 

\begin{theorem}[First SIP-RKBS Representer Theorem]\label{repr}
Let $\cX$ be a SIP-RKBS on a set $\Omega$ and let $H$ be its SIP-reproducing kernel. Let ${\bf z}$ and ${\bf s}$ as in Problem \ref{P:Interpolation} and assume that the set $H({\bf z},\cdot)^{\star_\cX}$ is linearly independent in $\cX'$. Then there exists a unique $f_\tu{min}\in \BI_{\bf z,s}$ such that 
\[
\|f_\tu{min}\|=\min \{\|f\| \colon f\in \BI_{\bf z,s}\}.
\]
Moreover, for this vector $f_\tu{min}$ we have $f_\tu{min}^{\star_\cX}\in \tu{span} H({\bf z},\cdot)^{\star_\cX}$. 
\end{theorem}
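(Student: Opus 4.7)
The plan is to prove the three assertions---existence, uniqueness, and the representer formula---in turn, assembling ingredients already developed in the excerpt.

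For \emph{existence}, first note that $\BI_{\bf z, s}$ is non-empty by Lemma \ref{exists} and is obviously affine, and closed because each point-evaluation functional is continuous on the RKBS $\cX$. Since $\cX$ is uniformly convex it is reflexive, so a norm-minimizing sequence in $\BI_{\bf z, s}$ admits a weakly convergent subsequence whose weak limit $f_\tu{min}$ again lies in $\BI_{\bf z, s}$ (closed convex sets are weakly closed) and satisfies $\|f_\tu{min}\| \le m := \inf_{f\in\BI_{\bf z, s}}\|f\|$ by weak lower semicontinuity of the norm. \emph{Uniqueness} then follows from strict convexity (a consequence of uniform convexity): two distinct minimizers $f_1, f_2$ of norm $m$ would give $(f_1+f_2)/2 \in \BI_{\bf z, s}$ as a third minimizer, and the equality case of the triangle inequality in a strictly convex space would force $f_1 = f_2$.

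For the \emph{representer formula}, the starting point is the identity $\BI_{\bf z, s} = f_\tu{min} + \BI_{\bf z, 0}$ from \eqref{Bz0Bzs}, with $\BI_{\bf z, 0}$ a linear subspace. Minimality says $\|f_\tu{min}\| \le \|f_\tu{min} + \la g\|$ for every $g \in \BI_{\bf z, 0}$ and every $\la \in \BC$, which is exactly Birkhoff--James orthogonality of $f_\tu{min}$ to $g$. Uniform Fr\'{e}chet differentiability renders $\cX$ smooth (combine Theorems \ref{Dnorm} and \ref{T:SIPunique1}), so Theorem \ref{normal} upgrades Birkhoff--James orthogonality to normality: $[g, f_\tu{min}] = 0$, equivalently $\inn{g}{f_\tu{min}^{\star_\cX}}_{\cX, \cX'} = 0$, for every $g \in \BI_{\bf z, 0}$. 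Using the SIP reproducing property of Theorem \ref{T:SIP-RKBS}, $g \in \BI_{\bf z, 0}$ precisely when $g(z_j) = \inn{g}{H(z_j,\cdot)^{\star_\cX}}_{\cX, \cX'} = 0$ for all $j$, so $\BI_{\bf z, 0}$ coincides with the pre-annihilator of the finite-dimensional subspace $M := \tu{span}\,H({\bf z}, \cdot)^{\star_\cX} \subset \cX'$. Hence $f_\tu{min}^{\star_\cX}$ annihilates the pre-annihilator of $M$, and the bipolar theorem delivers $f_\tu{min}^{\star_\cX} \in M$.

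The main obstacle is this last duality step: the equality $(M_\perp)^\perp = M$ must be invoked via the correct form of the bipolar theorem, and the crucial fact making it automatic is the finite dimensionality of $M$, which guarantees it is weak-$^*$ closed in $\cX'$. All other steps reduce to direct citations of results earlier in the excerpt.
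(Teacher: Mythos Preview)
Your proof is correct and follows essentially the same route as the paper's: existence and uniqueness from uniform convexity (the paper simply cites \cite[p.~53]{istr} where you spell out the reflexivity/weak-compactness/strict-convexity details), then Birkhoff--James orthogonality of $f_\tu{min}$ to $\BI_{\bf z,0}$ upgraded to normality via smoothness (Theorem~\ref{normal}), followed by the annihilator identity $({}^\perp M)^\perp = \overline{\tu{span}}\,M = M$ for the finite-dimensional $M=\tu{span}\,H({\bf z},\cdot)^{\star_\cX}$. The only cosmetic difference is that the paper phrases the last step via reflexivity and the norm closure of $\tu{span}$, whereas you invoke weak-$^*$ closedness and the bipolar theorem; in a reflexive space these are the same observation.
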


\begin{proof}[Sketch of proof]
From Lemma \ref{exists} we know that $\BI_{\bf z,s}$ is non-empty. The uniform convexity implies that a unique minimal-norm element in $\BI_{\bf z,s}$ exists; see, e.g., \cite[page 53]{istr}. Indicate this vector by $f_\tu{min}$. 

To show that $f_\tu{min}^{\star_\cX}\in \tu{span} H({\bf z},\cdot)^{\star_\cX}$, requires some `orthogonality' arguments.  For subsets $A \subset \cX$ and $B\subset \cX'$, we define
\[
A^\perp = \{ g \in \cX' : \langle a , g \rangle =0 \ \hbox{\rm for all} \ a \in A \}, \ ^\perp B = \{ f \in \cX : \langle f , b \rangle =0 \ \hbox{\rm for all} \ b \in B \}.
\]
Mimicking the Hilbert space proof, since $\cX$ is a SIP-RKBS and hence reflexive, it is easy to see that for any subset $B\subset \cX'$ we have $(^\perp B )^\perp = \overline{\rm span} B$, where $\overline{\rm span}$ indicates the closure of the span of the set it is working on. 

Now observe that by \eqref{Bz0Bzs}, for each $h \in \BI_{\bf z,0}$ and $\lambda\in\BC$ we have $f_\tu{min}+\lambda h\in \BI_{\bf z,s}$ and hence $\| f_\tu{min} \| \le \| f_\tu{min} + \lambda h \|$. In other words, $f_\tu{min}$ is orthogonal to each $h\in \BI_{\bf z,0}$. By Theorem \ref{normal} we have  
\[
0=[h,f_\tu{min}]=\inn{h}{f_\tu{min}^{\star_\cX}} \quad \mbox{for all } h\in \BI_{\bf z,0}. 
\]
Thus $f_\tu{min}^{\star_\cX}\in \BI_{\bf z,0}^\perp$. The reproducing property of $H$ directly yields  
\[
\BI_{\bf z,0}=\prescript{^\perp\!\!}{}{(H({\bf z},\cdot)^{\star_\cX})}.
\]
Hence 
\[
f_\tu{min}^{\star_\cX}\in \BI_{\bf z,0}^\perp= (\prescript{^\perp\!\!}{}{(H({\bf z},\cdot)^{\star_\cX})})^\perp=\overline{\textup{span}}\, H({\bf z},\cdot)^{\star_\cX}=\textup{span}\, H({\bf z},\cdot)^{\star_\cX},
\]
where the second identity follows from the general observation about $(^\perp B )^\perp$ made earlier in the proof. 
\end{proof}

Solving the optimal interpolation problem then works as follows. Note that $f^{\star_\cX}\in \BI_{\bf z,0}^\perp$ does not guarantee that also $f\in \BI_{\bf z,s}$. However, the uniform convexity and linear independence of $H({\bf z},\cdot)^{\star_\cX}$ does guarantee that the intersection of $(\BI_{\bf z,0}^\perp)^{\star_{\cX'}}$ and $\BI_{\bf z,s}$ consists of precisely one element, and this is the unique solution. Hence consider $f \in \cX$ of the form
\[
f^{\star_\cX}=\sum_{j=1}^n c_j H( z_j,\cdot)^{\star_{\cX}}
\]
for $c_1,\ldots,c_n\in\BC$. Then
\[
f=(f^{\star_\cX})^{\star_{\cX'}}=(\sum_{j=1}^n c_j H( z_j,\cdot)^{\star_{\cX}})^{\star_{\cX'}}
\]
depends only on the parameters $c_1,\ldots,c_n\in\BC$, though not in a linear way since the map $f\mapsto f^{\star_{\cX'}}$ is not additive. It then remains to determine the parameters $c_1,\ldots,c_n\in\BC$ from the interpolation conditions $f(z_j)=s_j$, $j=1,\ldots,n$, which will thus typically be nonlinear equations in $c_1,\ldots,c_n$. In case there is only one interpolation condition the formulas simplify. We state this next. 

\begin{corollary}\label{single}
    Let $\cX$ be a SIP-RKBS on a set $\Omega$ and let $H$ be as in Theorem \ref{T:SIP-RKBS}. Let $z_1\in\Om$ and $s_1\in\BC$. Then
    $$ f_\tu{min}(z):= s_1 \frac{H(z_1,z)}{H(z_1,z_1)}$$
    is the unique $f_\tu{min}\in\cX$ such that
\[
f_\tu{min}(z_1)=s_1,\ 
\|f_\tu{min}\|=\min \{\|f\| \colon f\in\cX,\ f(z_1)=s_1\}.
\]

\end{corollary}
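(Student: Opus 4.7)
The plan is to derive this as a direct specialization of Theorem~\ref{repr} to the case $n=1$. The first step is to dispose of the degenerate possibility: by Theorem~\ref{T:SIP-RKBS} we have $H(z_1,z_1)=[H(z_1,\cdot),H(z_1,\cdot)]=\|H(z_1,\cdot)\|^2$, so $H(z_1,z_1)=0$ iff $H(z_1,\cdot)\equiv 0$, and in that case the reproducing property $f(z_1)=[f,H(z_1,\cdot)]$ forces every $f\in\cX$ to vanish at $z_1$. The interpolation problem then has no solution unless $s_1=0$, in which case $f_{\tu{min}}=0$ is trivially the unique minimizer. So assume $H(z_1,z_1)\neq 0$; this is exactly the statement that the singleton $\{H(z_1,\cdot)^{\star_\cX}\}$ is linearly independent in $\cX'$, which is the hypothesis of Lemma~\ref{exists} and Theorem~\ref{repr}.

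Next I would invoke Theorem~\ref{repr}: it guarantees a unique minimizer $f_{\tu{min}}\in\BI_{\mathbf{z},\mathbf{s}}$, and asserts that $f_{\tu{min}}^{\star_\cX}=c\, H(z_1,\cdot)^{\star_\cX}$ for some $c\in\BC$. To recover $f_{\tu{min}}$ itself I would apply the inverse map $\varphi\mapsto\varphi^{\star_{\cX'}}$ supplied by the Corollary following Theorem~\ref{T:Duality}, using conjugate-homogeneity $(\la\varphi)^{\star_{\cX'}}=\ov{\la}\,\varphi^{\star_{\cX'}}$ and the identity $(H(z_1,\cdot)^{\star_\cX})^{\star_{\cX'}}=H(z_1,\cdot)$. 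This yields $f_{\tu{min}}=\ov{c}\,H(z_1,\cdot)$ as functions on $\Om$, i.e., $f_{\tu{min}}(z)=\alpha\, H(z_1,z)$ for the scalar $\alpha=\ov{c}$.

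Finally, the single interpolation condition $s_1=f_{\tu{min}}(z_1)=\alpha\, H(z_1,z_1)$ determines $\alpha=s_1/H(z_1,z_1)$, which upon substitution gives the stated formula. Uniqueness is inherited from the uniqueness clause in Theorem~\ref{repr}. The only point requiring care is the bookkeeping of complex conjugates when passing through the non-additive $\star_\cX$ and $\star_{\cX'}$ maps; beyond that there is no real obstacle, since the first representer theorem has already done the substantive work of reducing an infinite-dimensional minimization to a one-parameter family.
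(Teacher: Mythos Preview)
Your proposal is correct and follows essentially the same route as the paper: invoke Theorem~\ref{repr} for $n=1$ to get $f_{\tu{min}}^{\star_\cX}=c_1 H(z_1,\cdot)^{\star_\cX}$, apply $\star_{\cX'}$ to obtain $f_{\tu{min}}=\overline{c_1}\,H(z_1,\cdot)$, and solve for $\overline{c_1}$ from the interpolation condition. Your additional handling of the degenerate case $H(z_1,z_1)=0$ is a small bonus the paper omits (the stated formula tacitly presumes $H(z_1,z_1)\neq 0$).
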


\begin{proof}
    By Theorem \ref{repr} 
   there exists a $c_1$ so that $f_\tu{min}^{\star_\cX}=c_1 H(z_1,\cdot)^{\star_\cX}.$
   Applying $\star_{\cX'}$ on both sides we obtain that
$f_\tu{min}=\overline{c_1} H(z_1,\cdot).$ Plugging in the condition that $f_\tu{min}(z_1)=s_1$, gives that $\overline{c_1}= \frac{s_1}{H(z_1,z_1)}$.
\end{proof}

There is also a second representer theorem, which concerns a regularized version of minimizing a loss function among interpolants; details can, for instance, be found in \cite[Section 5.2]{ZXZ09}.

\section{Subspaces of semi-inner product spaces}\label{S:SIPsubspace}

Let $(\cX,\|\cdot\|)$ be a normed space with semi-inner product $[\cdot,\cdot]$ and let $\cZ\subset\cX$ be a closed subspace. Then $\cZ$ becomes a normed space with  semi-inner product, simply by restricting the norm and the semi-inner product. Also, in case $\cX$ is G-differentiable, then so is $\cZ$, so that uniqueness of the semi-inner product carries over from $\cX$ to $\cZ$ (this does not require $\cX$ to be a Banach space; see \cite{G67}). What is less straightforward is what the $\star$-operations on $\cZ$ and $\cZ'$ will be and how they relate to the $\star$-operations on $\cX$ and $\cX'$. In this section, under certain conditions on $\cX$,  we provide formulas for the $\star_\cZ$- and $\star_{\cZ'}$-operations.

Let $y\in\cZ$. Then
\[
y^{\star_\cZ}(x)=[x,y]_\cZ=[x,y]_\cX=y^{\star_\cX}(x),\quad x\in\cZ.
\]
Hence $y^{\star_\cZ}=y^{\star_\cX}|_{\cZ}$. However, there can be many $\varphi\in\cX'$ with $\varphi|_{\cZ}=y^{\star_\cX}|_{\cZ}$ and we require one with the property that $\|y\|_\cZ=\|y^{\star_\cZ}\|_{\cZ'}= \|\varphi\|_{\cX'}$. Such $\varphi\in\cX'$  exists by the Hahn-Banach theorem but it need not be unique. We now recall some results from \cite[Section 4]{RS} that provide conditions under which a unique $\varphi\in\cX'$ with $\varphi|_{\cZ}=y^{\star_\cX}|_{\cZ}$ and $\|y\|_\cZ=\|\varphi\|_{\cX'}$ exists.

Given $\varphi\in\cX'$ we say that $\wtil{\varphi}\in\cX'$ is {\em equivalent to $\varphi$ with respect to $\cZ$} (notation $\wtil{\varphi} \sim_\cZ \varphi$) if
\[
\wtil{\varphi}(z) = \varphi(z) ,\quad z \in \cZ. 
\]
We may just say `equivalent' if the subspace $\cZ$ is clear from the context. It is not hard to see that the set $S_{\varphi, \cZ}:=\{ \wtil{\varphi}  : \wtil{\varphi} \sim_\cZ \varphi \}$ is closed and convex. Clearly, whenever $\what{\varphi} \sim_\cZ \varphi$ we have that
\[
\| \varphi|_{\cZ} \|_{\cZ'} = \| \what{\varphi}|_{\cZ} \|_{\cZ'} \le \inf_{\wtil{\varphi} \in S_{\varphi, \cZ}} \| \wtil{\varphi} \|_{\cX'}.
\]
We say that $ \wtil{\varphi}_* \in \cX'$ is {\em an extremal functional with respect to $\cZ$} associated with $\varphi$ if $\wtil{\varphi}_* \sim_\cZ \varphi$ and
\begin{equation}\label{extremal} 
\| \wtil{\varphi}_*\|_{\cX'} = \inf_{\wtil{\varphi} \in S_{\varphi, \cZ}} \| \wtil{\varphi} \|_{\cX'}. 
\end{equation}
Again, we will leave out ``with respect to $\cZ$'' if no confusion regarding $\cZ$ can occur.

\begin{proposition}\label{P:UniExtremeFunct}
\cite[Theorems II, III and IV]{RS} The extremal functionals associated with $\varphi\in\cX'$ form a nonempty, closed and convex subset of $\cX'$ and $$\inf_{\wtil{\varphi} \in S_{\varphi, \cZ}} \| \wtil{\varphi} \|_{\cX'}=\|\varphi|_{\cZ}\|_{\cZ'}.$$ 
Moreover, if $\cX'$ is strictly convex, then there is exactly one extremal functional associated with $\varphi$.
\end{proposition}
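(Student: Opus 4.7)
The plan is to prove the three parts of the proposition in the order (nonemptiness and the infimum equality), then (closedness and convexity), and finally (uniqueness under strict convexity of $\cX'$).

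First, I would establish nonemptiness together with the formula $\inf_{\wtil{\varphi} \in S_{\varphi, \cZ}} \| \wtil{\varphi} \|_{\cX'}=\|\varphi|_{\cZ}\|_{\cZ'}$ in one stroke using Hahn-Banach. Observe that $\varphi|_\cZ\in\cZ'$ has a norm-preserving extension $\wtil{\varphi}_*\in\cX'$ by the Hahn-Banach theorem, so $\wtil{\varphi}_*\in S_{\varphi,\cZ}$ and $\|\wtil{\varphi}_*\|_{\cX'}=\|\varphi|_\cZ\|_{\cZ'}$. On the other hand, for any $\wtil{\varphi}\in S_{\varphi,\cZ}$, restriction to the subspace $\cZ$ cannot increase the operator norm, so
\[
\|\wtil{\varphi}\|_{\cX'}\geq \|\wtil{\varphi}|_\cZ\|_{\cZ'} = \|\varphi|_\cZ\|_{\cZ'}.
\]
Combining these gives both the infimum formula and the fact that $\wtil{\varphi}_*$ attains it, hence the set of extremal functionals is nonempty.

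Second, for closedness and convexity, I would work directly with the characterization that the set of extremal functionals equals $S_{\varphi,\cZ}\cap\{\psi\in\cX':\|\psi\|_{\cX'}\leq \|\varphi|_\cZ\|_{\cZ'}\}$. Note that $S_{\varphi,\cZ}$ is a translate of the annihilator of $\cZ$ in $\cX'$, and in particular is a closed affine subspace. Intersecting this with the closed ball of radius $\|\varphi|_\cZ\|_{\cZ'}$ yields a closed convex set; alternatively, convexity follows because the norm is a convex function and on $S_{\varphi,\cZ}$ it takes the constant minimum value on the extremal set, so any convex combination of two extremals still lies in $S_{\varphi,\cZ}$ and has norm bounded above by the minimum.

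Finally, for uniqueness under the assumption that $\cX'$ is strictly convex, suppose $\wtil{\varphi}_1,\wtil{\varphi}_2$ are two extremal functionals, necessarily with common norm $\nu:=\|\varphi|_\cZ\|_{\cZ'}$. We may assume $\nu>0$, the case $\nu=0$ being immediate since $\wtil{\varphi}_1=\wtil{\varphi}_2=0$. The midpoint $\tfrac{1}{2}(\wtil{\varphi}_1+\wtil{\varphi}_2)$ again lies in $S_{\varphi,\cZ}$ by affineness, so its norm is at least $\nu$; yet by the triangle inequality it is at most $\nu$. Therefore $\|\wtil{\varphi}_1+\wtil{\varphi}_2\|_{\cX'}=2\nu=\|\wtil{\varphi}_1\|_{\cX'}+\|\wtil{\varphi}_2\|_{\cX'}$, and by strict convexity of $\cX'$ there is some $\lambda>0$ with $\wtil{\varphi}_2=\lambda\wtil{\varphi}_1$; equality of norms forces $\lambda=1$, giving $\wtil{\varphi}_1=\wtil{\varphi}_2$.

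The argument is almost entirely soft, with no heavy calculation expected; the only subtle point is the invocation of Hahn-Banach to obtain a norm-preserving extension. In particular, nothing here requires $\cX$ itself to be smooth or uniformly convex, only that one takes care to keep the primal/dual geometry straight when applying strict convexity of $\cX'$ rather than of $\cX$.
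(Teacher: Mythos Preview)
Your proof is correct and follows the standard approach. The paper itself does not give a proof of this proposition but cites \cite[Theorems II, III and IV]{RS}; the only remark the paper makes is that existence (and hence attainment of the infimum) comes from Hahn--Banach, which is exactly what you do in your first step.
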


The existence of an extremal functional $\wtil{\varphi}_*$ with $\|\wtil{\varphi}_*\|_{\cX'}=\|\varphi|_{\cZ}\|_{\cZ'}$ is due to the Hahn-Banach theorem. This also means that we may then replace 'inf' by 'min' in the right hand side of \eqref{extremal}.

Let $\varphi\in \cX'$ be nontrivial. We say that $z \in \cZ$ is an {\em extremal point} for $\varphi$ on $\cZ$ if $\| z \|_\cZ = 1$ and $\varphi(z)=\| \varphi|_{\cZ} \|_{\cZ'}$. Recall that the {\em weak topology} on $\cX$ is the weakest topology that makes the linear functionals on $\cX$ continuous (in other words, the smallest topology containing the sets $f^{-1}(U)$, where $f\in \cX'$ and $U\subset \BC$ is open). The Banach space $\cX$ is {\em weakly compact} if the unit ball $B_\cX^{\rm cl}$ in $\cX$ is compact in the weak topology. By Kakutani's Theorem (see \cite[Theorem V.4.2]{C90}) this is equivalent to $\cX$ being reflexive. We have the following.

\begin{proposition}\label{P:UniExtremePoint}
\cite[Parts of Theorems V, VI and VII]{RS} 
Let $\varphi\in \cX'$ be nontrivial and let $\cZ$ be a closed linear subspace of $\cX$. In addition, let $\cX$ be weakly compact. Then the extremal points for $\varphi$ on $\cZ$ form a nonempty, closed and convex subset of $\cZ$. Moreover, if $\cX$ is also strictly convex, then there is exactly one extremal point for $\varphi$ on $\cZ$.
\end{proposition}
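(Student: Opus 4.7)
The plan is to handle existence by a weak-compactness argument and to derive closedness, convexity, and uniqueness from the forcing chain $M=\varphi(z)\leq M\|z\|_\cZ$ for $z$ in the closed unit ball of $\cZ$. Throughout set $M:=\|\varphi|_\cZ\|_{\cZ'}$ and let $E$ denote the set of extremal points of $\varphi$ on $\cZ$. I tacitly assume $\varphi|_\cZ\not\equiv 0$ so that $M>0$; the degenerate case $M=0$ is anyway outside the scope of the proposition, since then $E$ would be the whole unit sphere of $\cZ$, which is not convex in general.

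For existence, first I would choose a sequence $z_n\in\cZ$ with $\|z_n\|_\cZ=1$ and $\varphi(z_n)\to M$ (multiplying by unimodular scalars so that $\varphi(z_n)\to M$ from within $\BR_{\geq 0}$). Weak compactness of $B_\cX^{\rm cl}$, combined with the Eberlein--\v{S}mulian theorem, yields a weakly convergent subsequence $z_{n_k}\rightharpoonup z_*$ in $\cX$. Since $\cZ$ is a norm-closed convex subspace, Mazur's theorem makes it weakly closed, so $z_*\in\cZ$. Weak continuity of $\varphi$ gives $\varphi(z_*)=M$, and weak lower semicontinuity of the norm gives $\|z_*\|_\cZ\leq 1$. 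The forcing chain $M=\varphi(z_*)\leq M\|z_*\|_\cZ$ then upgrades this to $\|z_*\|_\cZ=1$, placing $z_*\in E$.

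Closedness of $E$ is immediate, as $E$ is the intersection of the unit sphere of $\cZ$ with the closed affine hyperplane $\{z\in\cZ:\varphi(z)=M\}$. For convexity, take $z_1,z_2\in E$ and $t\in[0,1]$; linearity of $\varphi$ and the triangle inequality give $\varphi(tz_1+(1-t)z_2)=M$ and $\|tz_1+(1-t)z_2\|_\cZ\leq 1$, and the same forcing chain promotes the latter bound to an equality, placing $tz_1+(1-t)z_2\in E$. For uniqueness under strict convexity, note that strict convexity of $\cX$ passes to the subspace $\cZ$. Then for $z_1,z_2\in E$ the midpoint $\tfrac12(z_1+z_2)$ lies in $E$ by convexity, so $\|z_1+z_2\|_\cZ=2=\|z_1\|_\cZ+\|z_2\|_\cZ$; strict convexity then produces $z_2=\la z_1$ with $\la>0$, and matching the unit norms forces $\la=1$, i.e., $z_1=z_2$.

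The main obstacle is the existence step, which genuinely requires reflexivity: one must produce the weak limit, verify it lies in $\cZ$ via Mazur, pass both $\varphi$ and the norm to the limit (the latter only semicontinuously), and recover $\|z_*\|_\cZ=1$ from the forcing chain. The remaining three assertions are short byproducts of applying that same chain along the convex combinations in question.
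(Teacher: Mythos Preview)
The paper does not supply its own proof of this proposition; it is quoted from Rogosinski--Shapiro (parts of their Theorems~V, VI, and~VII), so there is no in-paper argument to compare against. Your proof is correct and follows the standard route: weak compactness combined with Eberlein--\v{S}mulian and Mazur for existence, weak lower semicontinuity of the norm to land in the closed unit ball, and the forcing inequality $M=\varphi(z_*)\leq M\|z_*\|_\cZ$ to recover $\|z_*\|_\cZ=1$; the same inequality then yields convexity, and strict convexity gives uniqueness exactly as you describe. Your side remark about the degenerate case $\varphi|_\cZ\equiv 0$ is also well taken---the hypothesis that $\varphi$ be nontrivial on $\cX$ does not preclude $\varphi|_\cZ\equiv 0$, and in that case $E$ is the full unit sphere of $\cZ$, which is neither convex nor a singleton in general.
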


Returning to our original problem, given $y\in\cZ$, for $y^{*_{\cZ'}}$ we seek the extremal functional of $y^{*_{\cX'}}$ with respect to $\cZ$. In case $\cX$ is strictly convex, this extremal functional exists and is unique.  

For the remainder of the section we assume that $\cX$ is strictly convex and weakly compact, and that $\cX'$ is also strictly convex. Assume that $\cZ^\perp$ is complemented in $\cX'$, i.e., there exists a bounded projection $P$ of $\cX'$ onto $\cZ^\perp$. Let $\wtil{\cZ}$ be the range of $I-P$, that is, $\wtil{\cZ}$ is the complement of $\cZ^\perp$ in $\cX'$ so that $P$ is the projection on $\cZ^\perp$ along $\wtil{\cZ}$. Note that for the dual space of $\cZ$ we have 
\[
\cZ'\simeq \cX'/\cZ^\perp \simeq \wtil{\cZ}.
\] 
Example \ref{HpD} provides an example of the above equivalences.

We can now describe the $\star$-operations of the semi-inner products on $\cZ$ and $\cZ'$. 

\begin{proposition}\label{P:substar}
Let $(\cX,\|\cdot\|)$ be a uniformly convex Banach space that is weakly compact and has a uniformly continuous semi-inner product $[\cdot,\cdot]$. Let $\cZ$ be a subspace of $\cX$ such that $\cZ^\perp$ is complementable in $\cX'$ with complement $\wtil{\cZ}$ and $P$ the projection onto $\cZ^\perp$ along $\wtil{\cZ}$. Then for $y\in\cZ$ we have
\[
y^{\star_\cZ}=(I-P)[y^{\star_\cX}]\in \wtil{\cZ}.
\]
Moreover, for $\varphi\in \wtil{\cZ}$, 
\[
\varphi^{\star_{\wtil{\cZ}}}=\|\varphi\|_{\wtil{\cZ}} z_*,
\]
where $z_*\in\cZ$ is the unique extremal point of $\varphi$ on $\cZ$.
\end{proposition}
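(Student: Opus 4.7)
The plan is to handle the two identities in turn, using the isometric identification $\cZ'\simeq\wtil\cZ$ via restriction of functionals to $\cZ$ (exactly as in Example \ref{HpD}), and relying on the fact that the subspace $\cZ$ inherits uniform convexity and a uniformly continuous semi-inner product from $\cX$, so that $\cZ$ is itself smooth, uniformly convex, and reflexive.

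For the first identity I would begin from the elementary observation that $Py^{\star_\cX}\in\cZ^\perp$ annihilates $\cZ$, so for every $z\in\cZ$,
\[
\inn{z}{(I-P)y^{\star_\cX}}_{\cX,\cX'}=\inn{z}{y^{\star_\cX}}_{\cX,\cX'}-\inn{z}{Py^{\star_\cX}}_{\cX,\cX'}=[z,y]_\cX=[z,y]_\cZ.
\]
Thus the restriction of $(I-P)y^{\star_\cX}\in\wtil\cZ$ to $\cZ$ coincides with $y^{\star_\cZ}\in\cZ'$, and under the identification $\wtil\cZ\simeq\cZ'$ this gives the stated equality.

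For the second identity I would assume $\varphi\neq 0$ (the zero case being trivial) and invoke Proposition \ref{P:UniExtremePoint}, using weak compactness and strict convexity of $\cX$, to obtain the unique extremal point $z_*\in\cZ$, with $\|z_*\|_\cZ=1$ and $\varphi(z_*)=\|\varphi|_\cZ\|_{\cZ'}=\|\varphi\|_{\wtil\cZ}$. Normalising, $\psi:=\varphi/\|\varphi\|_{\wtil\cZ}\in\cZ'$ satisfies $\|\psi\|_{\cZ'}=1$ and $\psi(z_*)=1$, while $z_*^{\star_\cZ}\in\cZ'$ also has $\|z_*^{\star_\cZ}\|_{\cZ'}=1$ and $z_*^{\star_\cZ}(z_*)=1$. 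Smoothness of $\cZ$ (inherited from $\cX$, via Theorem \ref{T:SIPunique1}, or alternatively by extending $\cZ$-functionals to $\cX$ by Hahn--Banach) forces the unit-norm supporting functional at $z_*$ to be unique, so $\psi=z_*^{\star_\cZ}$, i.e., $\varphi=\|\varphi\|_{\wtil\cZ}\,z_*^{\star_\cZ}$. Applying the $\star_{\cZ'}$-operation, which under the identification corresponds to $\star_{\wtil\cZ}$, and using the inverse relation $(z_*^{\star_\cZ})^{\star_{\cZ'}}=z_*$ from the corollary following Theorem \ref{T:Duality}, together with $\overline{\|\varphi\|_{\wtil\cZ}}=\|\varphi\|_{\wtil\cZ}$, yields $\varphi^{\star_{\wtil\cZ}}=\|\varphi\|_{\wtil\cZ}\,z_*$.

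The main obstacle I anticipate is the bookkeeping around the identification $\wtil\cZ\simeq\cZ'$: one must pin down that the norm used on $\wtil\cZ$ is precisely the dual norm transported from $\cZ'$ by restriction (equivalently, each element of $\wtil\cZ$ is its own extremal representative in its $\cZ^\perp$-coset, guaranteed by strict convexity of $\cX'$ and the uniqueness clause of Proposition \ref{P:UniExtremeFunct}), and that $\star_{\wtil\cZ}$ is transported from $\star_{\cZ'}$ under this isometry. Once this setup is made explicit, the two claims reduce to the annihilation property of $\cZ^\perp$ and the uniqueness of norm-attaining supporting functionals supplied by smoothness, as outlined above.
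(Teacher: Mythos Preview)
Your treatment of the first identity is essentially the same as the paper's: both use that $Py^{\star_\cX}\in\cZ^\perp$ annihilates $\cZ$, so $(I-P)y^{\star_\cX}$ and $y^{\star_\cX}$ agree on $\cZ$, whence $(I-P)y^{\star_\cX}$ represents $y^{\star_\cZ}$ under the identification $\wtil\cZ\simeq\cZ'$.

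For the second identity your argument is correct but takes a different route from the paper. You start from the extremal point $z_*$, use smoothness of $\cZ$ to identify $\varphi/\|\varphi\|_{\wtil\cZ}$ with $z_*^{\star_\cZ}$ as the unique norm-one supporting functional at $z_*$, and then invert via the corollary after Theorem~\ref{T:Duality}. The paper instead goes directly: from $\|\varphi\|_{\wtil\cZ}^2=[\varphi,\varphi]_{\wtil\cZ}=\varphi(\varphi^{\star_{\wtil\cZ}})$ and $\|\varphi^{\star_{\wtil\cZ}}\|_\cZ=\|\varphi\|_{\wtil\cZ}$ it follows that $\hat z:=\|\varphi\|_{\wtil\cZ}^{-1}\varphi^{\star_{\wtil\cZ}}$ is already an extremal point of $\varphi$ on $\cZ$, and uniqueness (Proposition~\ref{P:UniExtremePoint}) forces $\hat z=z_*$. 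The paper's path is shorter and uses only the isometry of the $\star$-map plus the definition of extremal point; your path is a bit heavier, needing smoothness of $\cZ$ and the inverse relation $(z_*^{\star_\cZ})^{\star_{\cZ'}}=z_*$ on the subspace (hence that uniform convexity and uniform Fr\'echet differentiability descend to $\cZ$), but it has the merit of making explicit why the extremal-point and $\star$-dual notions coincide.
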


\begin{proof} 
Let $y\in\cZ$. Then $P y^{\star_\cX}\in\cZ^\perp$, so that  
\[
=\inn{z}{P y^{\star_\cX}}=(P y^{\star_\cX})(z)=0,\quad z\in\cZ.
\]
For each $z\in\cZ$ we have
\begin{align*}
[z,y]_{\cZ}
&=[z,y]_{\cX}=\inn{z}{y^{\star_\cX}}_{\cX,\cX'}=\inn{z}{(I-P)y^{\star_\cX}}_{\cX,\cX'} =\inn{z}{(I-P)y^{\star_\cX}}_{\cZ,\wtil{\cZ}}. 
\end{align*}
Since the above identity holds for all $z\in\cZ$, it follows that $y^{\star_\cZ}=(I-P)y^{\star_\cX}$.

Next we turn to the formula for $\varphi^{\star_{\wtil{\cZ}}}$. Note that
\begin{align*}
\|\varphi\|_{\wtil{\cZ}}^2=[\varphi,\varphi]_{\wtil{\cZ}}=\inn{\varphi^{\star_{\wtil{\cZ}}}}{\varphi}_{\cZ,\wtil{\cZ}}
=\varphi(\varphi^{\star_{\wtil{\cZ}}}).
\end{align*}
Thus $\what{z}:=\|\varphi\|_{\wtil{\cZ}}^{-1}\varphi^{\star_{\wtil{\cZ}}}$ is a vector in $\cZ$ such that $\|\what{z}\|=1$, because $\|\varphi^{\star_{\wtil{\cZ}}}\|_\cZ=\|\varphi\|_{\wtil{\cZ}}$, and $\varphi(\what{z})=\|\varphi\|_{\wtil{\cZ}}$. Hence $\what{z}=z_*$ is an extremal point of $\varphi$ on $\cZ$, which is unique according to Proposition \ref{P:UniExtremePoint}, and it follows that $\varphi^{\star_{\wtil{\cZ}}}=\|\varphi\|_{\wtil{\cZ}}\what{z}$ as claimed.
\end{proof}

In case $\cZ'\neq \{0\}$, so that $\cZ^\perp\neq\cX'$, then $\cZ^\perp$ has infinitely many complements in $\cX'$ (assuming $\cZ^\perp$ is complemented in $\cX'$). Hence the projection $P$ is not unique, and so is $y^{*_\cZ}$ in Proposition \ref{P:substar}. However, this nonuniqueness is only caused by the fact that $y^{*_\cZ}$ is in $\wtil{\cZ}$ rather than in $\cZ'$.

\section{Hardy and Bergman spaces as SIP-RKBS}\label{S:Hp interpolation}

In the following subsections, we will show how the theory we have developed so far applies to different Hardy and Bergman space settings.

\subsection{Hardy space on the unit ball in $\BC^n$}\label{HpB}

Let $\BC^n$ be endowed with the usual Euclidean inner product $\langle \cdot , \cdot \rangle_{\rm Eucl}$, and let $B_n$ be the open unit ball in $\BC^n$; thus, $\langle z , w \rangle_{\rm Eucl} = \sum_{i=1}^n z_i \overline{w_i}$ and $B_n=\{ x\in \BC^n : \| x \|_{\rm Eucl} < 1\}$. For $0<r<1$ and a function $f$ with domain $B_n$ we let $f_r$ be the dilated function $f_r(z):=f(rz)$, which is defined on $\frac{1}{r}B_n = \{ x\in \BC^n : \| x \|_{\rm Eucl} < \frac1r \}$. We let $\sigma$ denote the rotation-invariant probability Borel measure on the sphere $S_n=\{ x \in \BC^n : \| x \|_{\rm Eucl} = 1 \}.$ The Hardy space on $B_n$ is the space of holomorphic functions on $B_n$ so that 
$$ \| f \|_{H^p(B_n)} := \sup_{0<r<1} \left( \int_{S_n} |f_r|^p \tu{d} \sigma \right)^\frac1p <\infty.$$ It is well-known (for details, see \cite[Section 5.6]{Rudin}) that one may view $H^p(B_n)$ as a closed subspace of $L^p(S_n,\sigma)$. This requires taking nontangential boundary limits, thus associating with $f\in H^p(B_n)$ a function $f_{\rm boundary}$, which is almost everywhere defined on $S_n$. It will be convenient (justified by the theory) to treat $f$ and $f_{\rm boundary}$ as the same function, as we do for instance in equation \eqref{repH}. 
References on Hardy spaces include \cite{Hoffman, D70, Koosis} for the single variable case and \cite{Rudin} for the several variable case.

The Cauchy kernel for $B_n$ is defined by 
$$ K_{\rm C} (w,z) = \frac{1}{(1-\langle z , w \rangle_{\rm Eucl} )^n} , \ z,w\in B_n,$$
and is the reproducing kernel for $H^p(B_n)$; see \cite[Corollary of Theorem 6.3.1]{Rudin}.
When $n=1$ it is the Szeg\"o kernel $1/(1-z\overline{w})$. 
%\begin{theorem}\label{Rproj}\cite[Corollary of Theorem 6.3.1]{Rudin} Let $1<p<\infty$. For $f\in L^p(S_n,\sigma)$ let
%$$ P_{\rm C}[f](z)= \int_{S_n} K_{\rm C} (z,w) f(w) \tu{d} \sigma(w). $$ Then $f\mapsto P_{\rm C}[f]$ is a bounded linear projection of $L^p(S_n,\sigma)$ onto $H^p(B_n)$.
%\end{theorem}
Thus, when $f\in H^p(B_n)$ we have that
\begin{equation}\label{repH} f(z)= \int_{S_n} K_{\rm C} (w,z) f(w) \tu{d} \sigma(w), z\in B_n.  \end{equation} 

In this context we can state the following new result. We will provide the proof later in this section. 
\begin{theorem}\label{single2} Let $1<p<\infty$, $z_0=(z_0^{(1)}, \ldots , z_0^{(n)} )\in B_n$, and $w_0\in \BC$. The unique minimal norm interpolant $f_{\rm min}$ in $H^p(B_n)$ with the single interpolation condition $f(z_0)=w_0$ is given by
\begin{equation}\label{singleH} f_{\rm min}(z)= w_0 \left( \frac{1-\|z_0\|_{\rm Eucl}^2}{1-\langle z, z_0 \rangle_{\rm Eucl}}  \right)^{\frac{2n}{p}} = w_0 \left( \frac{1-\sum_{j=1}^n|z_0^{(j)}|^2}{ 1-\sum_{j=1}^n z_j \overline{z_0^{(j)}} }  \right)^{\frac{2n}{p}} , \end{equation}
where $z=(z_1,\ldots,z_n) \in B_n$. In addition, $\| f_{\rm min} \|_{H^p(B_n)} = |w_0|(1-\|z_0\|_{\rm Eucl}^2)^\frac{n}{p} $.

\end{theorem}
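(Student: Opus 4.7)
Since $H^p(B_n)$ is a closed subspace of $L^p(S_n,\sigma)$, it inherits uniform convexity and uniform Fr\'echet differentiability of the norm (Example \ref{Ex:Lp2}), hence is a SIP-RKBS with semi-inner product given by restriction of the $L^p$ one. My plan is to guess the SIP-reproducing kernel
\[
H(z_0, z) = \bigl(1 - \langle z, z_0\rangle_{\tu{Eucl}}\bigr)^{-2n/p},
\]
verify it via the defining identity $f(z_0) = [f, H(z_0, \cdot)]_{H^p}$, and then read off $f_{\min}(z) = w_0 H(z_0,z)/H(z_0,z_0)$ from Corollary \ref{single}, which produces exactly the formula claimed. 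The norm $|w_0|(1-\|z_0\|_{\tu{Eucl}}^2)^{n/p}$ will then drop out of the classical identity $\int_{S_n}|1-\langle w, z_0\rangle|^{-2n}\,\tu{d}\sigma(w) = (1-\|z_0\|_{\tu{Eucl}}^2)^{-n}$ applied to $\|f_{\min}\|_{H^p}^p$.

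The verification uses the $L^p$ semi-inner product formula from Example \ref{Ex:Lp}. On $S_n$ one has $|1-\langle w, z_0\rangle|^2 = (1-\langle w, z_0\rangle)(1-\langle z_0, w\rangle)$, so
\[
\overline{H(z_0, w)}|H(z_0, w)|^{p-2} = (1-\langle z_0, w\rangle)^{-n}(1-\langle w, z_0\rangle)^{-n(p-2)/p},
\]
where the exponent $2n/p$ on $H(z_0, z)$ was chosen precisely so that the first factor is the Cauchy kernel $K_{\tu C}(w, z_0) = (1-\langle z_0, w\rangle)^{-n}$. Using $\|H(z_0, \cdot)\|_{H^p}^{p-2} = (1-\|z_0\|_{\tu{Eucl}}^2)^{-n(p-2)/p}$ from the classical integral above, the reproducing property collapses to showing
\[
\int_{S_n} f(w)(1-\langle z_0, w\rangle)^{-n}(1-\langle w, z_0\rangle)^{-n(p-2)/p}\,\tu{d}\sigma(w) = (1-\|z_0\|_{\tu{Eucl}}^2)^{-n(p-2)/p}\, f(z_0)
\]
for every $f \in H^p(B_n)$.

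I would establish this last identity by testing against monomials $f(w) = w^\alpha$. Expanding both kernel factors as binomial series in $\langle z_0, w\rangle$ and $\langle w, z_0\rangle$ respectively, and invoking the orthogonality relation $\int_{S_n} w^\mu\overline{w}^\nu\,\tu{d}\sigma = \delta_{\mu\nu}\mu!(n-1)!/(n-1+|\mu|)!$, the Gamma-function ratios coming from the Cauchy factor cancel exactly with the normalising constants from the orthogonality relation. What remains is the multinomial resummation
\[
\sum_{\gamma \in \BZ_{\geq 0}^n}\frac{\Gamma(\beta + |\gamma|)}{\gamma!\,\Gamma(\beta)}\prod_{j=1}^n |z_0^{(j)}|^{2\gamma_j} = (1-\|z_0\|_{\tu{Eucl}}^2)^{-\beta}
\]
with $\beta = n(p-2)/p$, which follows by grouping terms of equal total degree $|\gamma|$ and applying the one-variable binomial series. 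Density of holomorphic polynomials in $H^p(B_n)$ extends the identity from monomials to all of $H^p$. The main obstacle I expect is the multi-index bookkeeping in this expansion step, together with justifying the termwise integration of the double series, which requires controlling both uniform convergence on compacta in $B_n$ and suitable $L^p/L^q$ domination at the boundary.
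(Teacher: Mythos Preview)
Your proposal is correct, and at the high level it matches the paper exactly: both invoke Corollary~\ref{single} and identify the SIP-reproducing kernel as $H(z_0,\cdot)=K_{\tu C}(z_0,\cdot)^{2/p}$. The difference is in how the reproducing identity $f(z_0)=[f,H(z_0,\cdot)]$ is verified. You propose to expand both factors $(1-\langle z_0,w\rangle)^{-n}$ and $(1-\langle w,z_0\rangle)^{-n(p-2)/p}$ in binomial series, test against monomials $w^\alpha$, and invoke the orthogonality relations on $S_n$ together with a multinomial resummation. The paper (Proposition~\ref{Kstar}) bypasses all of this by observing that the required integral is
\[
\int_{S_n}\bigl[f(w)\,(1-\langle w,z_0\rangle)^{-n(p-2)/p}\bigr]\,K_{\tu C}(w,z_0)\,\tu{d}\sigma(w),
\]
and that the bracketed function is itself in $H^p(B_n)$ (it is $f$ times the bounded holomorphic function $K_{\tu C}(z_0,\cdot)^{1-2/p}$). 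One more application of the Cauchy reproducing formula then evaluates the integral as $f(z_0)(1-\|z_0\|^2)^{-n(p-2)/p}$ immediately, with no series, no multi-index bookkeeping, and no convergence justification needed. Your route gives an independent, more hands-on confirmation at the cost of the bookkeeping you yourself flag; the paper's route is the one-line shortcut you were looking for.
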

For $n=1$ the result above was proven before as it follows easily from \cite[Theorem 4]{CMS94}; it can also be found directly in \cite[Theorem 2]{Harmsen}.

%\begin{proof} When $w_0=0$, the results holds trivially (for any $p$), so let us assume that $w_0\neq 0$.
%When $p\in 2\BN$, we have that if $f$ is analytic, then so is $f^\frac{p}{2}$. Also, $\| f \|_{H^p}^p = \| f^\frac{p}{2} \|_{H^2}^2$, so when one is finite, then so is the other. In addition, we have that $\left( \frac{1}{1-\langle z, z_0 \rangle_{\rm Eucl}}  \right)^{\frac{2n}{p}}$ is analytic in $B_n$. The theorem now  follows from Proposition \ref{Bas}.

%When $n=1$ and $1<p<\infty$ the result follows from \cite[Theorem 4]{CMS94} and can also be found directly in \cite[Theorem 2]{Harmsen}.
%\end{proof}

\subsection{Bergman space on the unit ball in $\BC^n$}

Let $\nu$ be the Lebesgue measure on $\BC^n$ normalized so that $\nu(B_n)=1$. The Bergman space $A^p_n$ is defined as $$ A^p_n = L^p(B_n,\nu) \cap \{ f: B_n \to \BC : f \ \hbox{\rm is holomorphic on} \ B_n \}.$$ 
References on the Bergman space include \cite{Axler, DurenBergman, Hedenmalm} for the single variable case and \cite{Rudin} for the several variable case. Extremal functionals in the Bergman setting were considered in \cite{Ferguson2}. The Bergman kernel for $B_n$ is defined by 
$$ K_{\rm B} (w,z) = \frac{1}{(1-\langle z , w \rangle_{\rm Eucl} )^{n+1}} , \ z,w\in B_n, $$ which is the reproducing kernel for $A_n^p$ \cite[Theorem 7.4.1]{Rudin}.
%\begin{theorem}\label{Bproj}\cite[Theorem 7.4.1]{Rudin} Let $1<p<\infty$. For $f\in L^p(B_n,\nu)$ let
%$$ P_{\rm B}[f](z)= \int_{B_n} K_{\rm B} (z,w) f(w) \tu{d} \nu(w). $$ Then $f\mapsto P_{\rm B}[f]$ is a bounded linear projection of $L^p(B_n,\nu)$ onto $A^p_n$.
%\end{theorem}
Thus, when $f\in A^p_n$ we have that
\begin{equation}\label{repB} f(z)= \int_{B_n} K_{\rm B} (w,z) f(w) \tu{d} \nu(w), z\in B_n. \end{equation} 

Similar to the Hardy space on $B_n$, we are able to provide the following solution to the single point minimal norm interpolation problem in the Bergman space.

\begin{theorem}\label{single3} Let $1<p<\infty$, $z_0=(z_0^{(1)}, \ldots , z_0^{(n)} )\in B_n$, and $w_0\in \BC$. The unique minimal norm interpolant $f_{\rm min}$ in $A_n^p$ with the single interpolation condition $f(z_0)=w_0$ is given by
\begin{equation}\label{singleB} f_{\rm min}(z)= w_0 \left( \frac{1-\|z_0\|_{\rm Eucl}^2}{1-\langle z, z_0 \rangle_{\rm Eucl}}  \right)^{\frac{2(n+1)}{p}} = w_0 \left( \frac{1-\sum_{j=1}^n|z_0^{(j)}|^2}{ 1-\sum_{j=1}^n z_j \overline{z_0^{(j)}} }  \right)^{\frac{2(n+1)}{p}} , \end{equation}
where $z=(z_1,\ldots,z_n) \in B_n$. In addition, $\| f_{\rm min} \|_{A^p_n} = |w_0|(1-\|z_0\|_{\rm Eucl}^2)^\frac{n+1}{p} $.
\end{theorem}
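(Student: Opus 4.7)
The strategy is to invoke Corollary \ref{single}, which reduces the single-point minimal-norm interpolation problem to identifying the SIP-reproducing kernel $H(z_0,\cdot)$ of $A_n^p$ and evaluating the ratio $H(z_0,z)/H(z_0,z_0)$. Since $A_n^p$ is a closed subspace of $L^p(B_n,\nu)$, its semi-inner product is simply the restriction of the explicit $L^p$-semi-inner product of Example \ref{Ex:Lp}, so a candidate for $H(z_0,\cdot)$ can be tested directly against the defining relation in Theorem \ref{T:SIP-RKBS} without having to work out the $\star_{A_n^p}$-operation.

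Guided by the Hilbertian case $p=2$ (where $H=K_B$), I would propose the ansatz $H(z_0,w)=\mu\,\phi_{z_0}(w)$ with
\[
\phi_{z_0}(w):=(1-\langle w,z_0\rangle_{\rm Eucl})^{-2(n+1)/p}
\]
and some $\mu>0$. The function $\phi_{z_0}$ is holomorphic in $w$ and, because the singularity $\langle w,z_0\rangle_{\rm Eucl}=1$ lies strictly outside $\overline{B_n}$ for $z_0\in B_n$, it is bounded on $B_n$; in particular $\phi_{z_0}\in A_n^p$. The key algebraic step, exploiting $|1-a|^2=(1-a)(1-\bar a)$, is
\[
\overline{\phi_{z_0}(w)}\,|\phi_{z_0}(w)|^{p-2}=K_B(w,z_0)\,\psi_{z_0}(w),\quad \psi_{z_0}(w):=(1-\langle w,z_0\rangle_{\rm Eucl})^{-(n+1)(p-2)/p},
\]
where the exponents combine as $-2(n+1)/p-(n+1)(p-2)/p=-(n+1)$ to produce exactly the Bergman kernel, and $\psi_{z_0}$ is again holomorphic and bounded on $B_n$. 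Since $f\in A_n^p$ and $\psi_{z_0}$ is bounded, $f\psi_{z_0}\in A_n^p\subset A_n^1$, so the Bergman reproducing formula \eqref{repB} applied to $f\psi_{z_0}$ yields
\[
\int_{B_n} f(w)\,\overline{\phi_{z_0}(w)}\,|\phi_{z_0}(w)|^{p-2}\,d\nu(w)=f(z_0)\,\psi_{z_0}(z_0)=f(z_0)(1-\|z_0\|_{\rm Eucl}^2)^{-(n+1)(p-2)/p}.
\]
Hence $[f,\phi_{z_0}]_{A_n^p}=[f,\phi_{z_0}]_{L^p}$ is a strictly positive scalar multiple of $f(z_0)$; choosing $\mu$ to absorb this scalar identifies $H(z_0,\cdot)=\mu\,\phi_{z_0}$, and Corollary \ref{single} then gives, with $\mu$ cancelling in the ratio, formula \eqref{singleB}.

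For the norm, I would note
\[
\|f_{\min}\|_{A_n^p}^p=|w_0|^p(1-\|z_0\|_{\rm Eucl}^2)^{2(n+1)}\int_{B_n}|1-\langle z,z_0\rangle_{\rm Eucl}|^{-2(n+1)}\,d\nu(z),
\]
and use the standard identity $\int_{B_n}|K_B(z,z_0)|^2\,d\nu(z)=K_B(z_0,z_0)=(1-\|z_0\|_{\rm Eucl}^2)^{-(n+1)}$, which follows immediately from the orthogonal-monomial expansion of $K_B$ in $A_n^2$. This yields $\|f_{\min}\|_{A_n^p}=|w_0|(1-\|z_0\|_{\rm Eucl}^2)^{(n+1)/p}$. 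The main obstacle is the identification of $H(z_0,\cdot)$: the algebraic splitting of $|\phi_{z_0}|^{p-2}$ via $|1-a|^2=(1-a)(1-\bar a)$ into a holomorphic, bounded factor times the Bergman kernel is the trick that converts the $L^p$ semi-inner product into a reproducing integral and, crucially, makes the final answer independent of the unknown normalization constant $\mu$.
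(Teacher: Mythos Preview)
Your proposal is correct and takes essentially the same approach as the paper: both invoke Corollary \ref{single} and identify $H(z_0,\cdot)$ as (a scalar multiple of) $K_{\rm B}(z_0,\cdot)^{2/p}$ via the same algebraic splitting $\overline{\phi_{z_0}}|\phi_{z_0}|^{p-2}=K_{\rm B}(\cdot,z_0)\,\psi_{z_0}$ and the Bergman reproducing formula, and the norm computation is identical. The only cosmetic difference is that the paper packages this computation as Proposition \ref{Kstar} (computing the $\star_{\cZ^p}$-image of $K_{\rm B}(z_0,\cdot)^{2/p}$ and testing against the spanning set $\{K_{\rm B}(\zeta,\cdot)\}$), whereas you verify the SIP-reproducing identity $[f,\mu\phi_{z_0}]=f(z_0)$ directly for all $f\in A_n^p$; by \eqref{SIPvsDP} these are the same verification.
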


%\begin{proof}
%When $p\in 2\BN$, we have that if $f$ is analytic, then so is $f^\frac{p}{2}$. In addition, we have that $\left( \frac{1}{1-\langle z, z_0 \rangle_{\rm Eucl}}  \right)^{\frac{2(n+1)}{p}}$ is analytic in $B_n$. The theorem now  follows from Proposition \ref{Bas}.
%\end{proof}

\subsection{Hardy space on the Polydisk}

Let $\BD = \{ z\in \BC : |z|<1 \}$ be the open unit disk in $\BC$. For $0<r<1$ and a function $f$ with domain $\BD^n$ we let $f_r$ be the dilated function $f_r(z):=f(rz)$, which is defined on $\frac{1}{r}\BD^n$.  The Hardy space on $\BD^n$ is the space of holomorphic functions on $\BD^n$ so that 
$$ \| f \|_{H^p(\BD^n)} := \sup_{0<r<1} \left( \frac{1}{(2\pi)^n}\int_0^{2\pi} \cdots \int_0^{2\pi} |f_r(e^{it_1}, \ldots , e^{it_n})|^p \tu{d} t_1 \cdots \tu{d} t_n \right)^\frac1p <\infty.$$ It is well-known (for details, see \cite{Rudin2}) that one may view $H^p(\BD^n)$ as a closed subspace of $L^p(\BT^n)$.
The Hardy space $H^p(\BD^n)$ is complemented in $L^p(\BT^n)$ (see, e.g., \cite{Ebenstein}), and the reproducing kernel on $H^p(\BD^n)$ is given by
$$ K(w,z)=\prod_{j=1}^n \frac{1}{1-z_j\overline{w_j}} ;$$
see, e.g., \cite[Section 1.2]{Rudin}.

For the single point minimal norm interpolation, we have the following result.

\begin{theorem}\label{single4} Let $1<p<\infty$, $z_0=(z_0^{(1)}, \ldots , z_0^{(n)} )\in \BD^n$, and $w_0\in \BC$. The unique minimal norm interpolant $f_{\rm min}$ in $H^p(\BD^n)$ with the single interpolation condition $f(z_0)=w_0$ is given by
\begin{equation}\label{singleP} f_{\rm min}(z)= w_0 \left( \prod_{j=1}^n \frac{1-|z_0^{(j)}|^2}{1-z_j\overline{z_0^{(j)}}}  \right)^{\frac{2}{p}}  , \end{equation}
where $z=(z_1,\ldots,z_n) \in \BD^n$. In addition, $\| f_{\rm min} \|_{H^p(\BD^n)} = |w_0| \prod_{j=1}^n 
(1-|z_0^{(j)}|^2)^\frac1p $.
\end{theorem}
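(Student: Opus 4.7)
The plan is to follow exactly the template set by Theorems \ref{single2} and \ref{single3}: exhibit the candidate $f_\textup{min}$ given by \eqref{singleP}, verify that it interpolates, and then use the Birkhoff--James orthogonality criterion of Theorem \ref{normal} together with the reproducing property of the Szeg\H{o} kernel $K(w,z)=\prod_{j=1}^n (1-z_j\overline{w_j})^{-1}$ to conclude minimality. Since $H^p(\BD^n)\subset L^p(\BT^n)$ is a closed subspace, the restricted semi-inner product is the one inherited from $L^p(\BT^n)$, and the formula from Example \ref{Ex:Lp} applies.

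\emph{Checking interpolation and membership.} On $\BD^n$ the function $(1-z_j\overline{z_0^{(j)}})^{-2/p}$ is the principal branch of a power of a function whose real part is bounded below by $1-|z_0^{(j)}|>0$; it is therefore holomorphic and bounded on $\BD^n$, so the candidate $f_\textup{min}$ lies in $H^\infty(\BD^n)\subset H^p(\BD^n)$. Direct substitution $z=z_0$ gives $f_\textup{min}(z_0)=w_0$.

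\emph{Orthogonality.} By Corollary \ref{single} combined with Theorem \ref{normal}, it suffices to show $[h,f_\textup{min}]_{L^p(\BT^n)}=0$ for every $h\in H^p(\BD^n)$ with $h(z_0)=0$. The key algebraic step is the identity, valid pointwise on $\BT^n$,
\[
\overline{f_\textup{min}(w)}\,|f_\textup{min}(w)|^{p-2}=C\cdot K(w,z_0)\cdot g(w),
\]
where $C$ is an explicit constant and $g(w)=\prod_{j=1}^n (1-w_j\overline{z_0^{(j)}})^{-(p-2)/p}$. This is verified by collecting the exponents $-2/p$, $-(p-2)/p$, $-(p-2)/p$ in the factors $(1-\overline{w_j}z_0^{(j)})$ and $(1-w_j\overline{z_0^{(j)}})$ coming from $\overline{f_\textup{min}}$ and $|f_\textup{min}|^{p-2}=[(1-w_j\overline{z_0^{(j)}})(1-\overline{w_j}z_0^{(j)})]^{(p-2)/p}$, and noticing that the total exponent of $(1-\overline{w_j}z_0^{(j)})$ collapses to $-1$, producing exactly $K(w,z_0)$. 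The function $g$ is holomorphic and bounded on $\BD^n$, so $hg\in H^p(\BD^n)\subset H^1(\BD^n)$, and the reproducing property of $K$ yields
\[
\int_{\BT^n} h(w)\,\overline{f_\textup{min}(w)}\,|f_\textup{min}(w)|^{p-2}\,d\sigma(w)=C\,(hg)(z_0)=C\,h(z_0)g(z_0)=0.
\]
Dividing by $\|f_\textup{min}\|^{p-2}$ gives $[h,f_\textup{min}]=0$, which combined with uniform convexity (Example \ref{Ex:Lp2}) and the uniqueness assertion of Theorem \ref{repr} finishes the minimality and uniqueness claim.

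\emph{Norm computation.} The norm is obtained by a direct integration:
\[
\|f_\textup{min}\|_{H^p(\BD^n)}^{p}=|w_0|^p\prod_{j=1}^n(1-|z_0^{(j)}|^2)^{2}\prod_{j=1}^n\frac{1}{2\pi}\!\int_0^{2\pi}\!|1-e^{it_j}\overline{z_0^{(j)}}|^{-2}\,dt_j,
\]
and each inner integral equals $(1-|z_0^{(j)}|^2)^{-1}$ by the standard Poisson-kernel identity, yielding $\|f_\textup{min}\|_{H^p(\BD^n)}=|w_0|\prod_{j=1}^n(1-|z_0^{(j)}|^2)^{1/p}$. The main (minor) technical point is the bookkeeping of exponents producing the miraculous collapse to the Szeg\H{o} kernel; otherwise the argument is entirely parallel to the single-variable case.
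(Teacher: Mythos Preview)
Your argument is correct and is essentially the paper's own proof unpacked: the algebraic identity you establish, namely that $f_{\min}^{\star_{L^p}}$ equals a constant times $K(\cdot,z_0)$ multiplied by a bounded holomorphic factor $g$, is exactly the content of Proposition~\ref{Kstar} (which shows $((K(z_0,\cdot))^{2/p})^{\star_{L^p}}\sim_{\cZ^p} K(\cdot,z_0)$), and the paper then simply invokes Corollary~\ref{single}. The only difference is packaging: the paper isolates the exponent bookkeeping once in Proposition~\ref{Kstar} and applies it uniformly to all four theorems, whereas you carry it out in situ and appeal directly to Birkhoff--James orthogonality (Theorem~\ref{normal}) rather than to the SIP-kernel $H$; your citation of Corollary~\ref{single} in the orthogonality step is superfluous, since what you actually use is that normality to $\BI_{\mathbf z,0}$ implies minimality, which follows from Theorem~\ref{normal} alone.
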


\subsection{Hardy space on the upper half-plane $\BC_+$}

We recall from \cite[Chapter 11]{D70} the following. Let $\BC_+=\{x+iy : x\in \BR, y >0\}$ be the open upper half-plane of the complex plane. The Hardy space $H^p(\BC_+)$ consists of analytic functions on $\BC_+$ so that $|f(x+iy)|^p$ is integrable for all $y>0$ and 
$$ \| f \|_{H^p(\BC_+)} = \sup_{0<y<\infty} \left( \int_{-\infty}^\infty |f(x+iy)|^p dx \right)^\frac1p <\infty . $$ If $f \in H^p(\BC_+)$ then the boundary function $$f(x) = \lim_{y\to 0+} f(x+iy)$$ exists almost everywhere and belongs to $L^p(\BR, \tu{d}x)$; see \cite[Corollary to Theorem 1.11]{D70}.
By identifying $f \in H^p(\BC_+)$ with its boundary function, we may view $H^p(\BC_+)$ as a subspace of $L^p(\BR, \tu{d}x)$. This is a closed complemented subspace; see \cite[Section 3.6]{Helson} for details.
%The Hilbert of transform is defined by 
%$$ H[f](t) = \frac{1}{\pi} \hbox{\rm p.v.} \int_{-\infty}^\infty \frac{f(\tau)}{t-\tau} \tu{d} \tau , $$
%where p.v. denotes the Cauchy principal value. The Hilbert transform is well-defined on $L^p(\BR, \tu{d} x)$ and defines a bounded linear operator there (Marcel Riesz, 1928). 
The reproducing kernel for $H^p(\BC_+)$ is given by  $$K(w,z)= \frac{1}{2\pi i} \frac{1}{\overline{w}-z}, \ z,w \in \BC_+; $$ see \cite[Theorem 11.8]{D70}.

%\begin{theorem}\cite[Theorem 11.8]{D70} Let $1\le p <\infty$. If $f \in H^p(\BC_+)$ then
%$$ f(z) = \frac{1}{2\pi i} \int_{-\infty}^\infty \frac{f(t)}{t-z} dt, \ {\rm Im} z = y >0,$$
%and the integral vanishes for all $y<0$. Conversely, if $h \in L^p(\BR, \tu{d}x)$ and
%$$\frac{1}{2\pi i} \int_{-\infty}^\infty \frac{h(t)}{t-z} dt \equiv 0, \ {\rm Im} z = y <0,$$ then for $y>0$ this integral represents a function $f \in H^p(\BC_+)$ whose boundary function equals $h$ a.e. on $\BR$.
%\end{theorem}

%For the single point minimal norm interpolation problem, we have the following result.
%
%\begin{theorem}\label{single5} Let $1<p<\infty$, $z_0 \in \BC_+$, and $w_0\in \BC$. The unique minimal norm interpolant $f_{\rm min}$ in $H^p(\BC_+)$ with the single interpolation condition $f(z_0)=w_0$ is given by
%\begin{equation}\label{singleH2} f_{\rm min}(z)= w_0 \left( \frac{2i{\rm Im} z_0}{z-\overline{z_0} }  \right)^{\frac{2}{p}} , \ z \in \BC_+. \end{equation}
%In addition, $\| f_{\rm min} \|_{H^p(\BC_+)} = |w_0|({4\pi{\rm Im} z_0})^\frac{1}{p} $.
%\end{theorem}

%\begin{proof}
%When $p\in 2\BN$, we have that if $f$ is analytic, then so is $f^\frac{p}{2}$. In addition, we have that $\left( \frac{1}{\overline{z_0}-z}  \right)^{\frac{2}{p}}$ is analytic in $\BC_+$. The theorem now  follows from Proposition \ref{Bas}.
%\end{proof}

Analogously, one has the multivariable version on the poly-half-plane $\BC_+^n$. The reproducing kernel for $H^p(\BC_+^n)$ is given by  $$K(w,z)= \frac{1}{(2\pi i)^n}\prod_{j=1}^n  \frac{1}{\overline{w_j}-z_j}, \ z,w \in \BC_+^n. $$ 
See \cite[Section III.5]{SW} for further details. We now obtain 

\begin{theorem}\label{single5n} Let $1<p<\infty$, $z_0 =(z_0^{(1)}, \ldots , z_0^{(n)}) \in \BC_+^n$, and $w_0\in \BC$. The unique minimal norm interpolant $f_{\rm min}$ in $H^p(\BC_+^n)$ with the single interpolation condition $f(z_0)=w_0$ is given by
\begin{equation}\label{singleH2} f_{\rm min}(z)= w_0 \prod_{j=1}^n \left( \frac{2i{\rm Im} z_0^{(j)}}{z_j-\overline{z_0^{(j)}} }  \right)^{\frac{2}{p}} , \ z=(z_1, \ldots, z_n) \in \BC_+^n. \end{equation}
In addition, $\| f_{\rm min} \|_{H^p(\BC_+^n)} = |w_0|\prod_{j=1}^n ({4\pi{\rm Im} z_0^{(j)}})^\frac{1}{p} $.
\end{theorem}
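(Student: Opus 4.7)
The plan is to verify that the candidate $f_{\min}$ satisfies the optimality condition of the First Representer Theorem \ref{repr}. Since $H^p(\BC_+^n)$ embeds as a closed complemented subspace of $L^p(\BR^n)$ via nontangential boundary values, it inherits uniform convexity and uniform Fr\'{e}chet differentiability from $L^p$ (Example \ref{Ex:Lp2}), so it is a SIP-RKBS. By Proposition \ref{P:substar}, the representer condition $f_{\min}^{\star_{H^p}}\in \tu{span}\{H(z_0,\cdot)^{\star_{H^p}}\}=\tu{span}\{K(\cdot,z_0)\}$ reduces to showing that $f_{\min}^{\star_{L^p}}$ differs from a scalar multiple of $K(\cdot,z_0)$ by an element of $H^p(\BC_+^n)^\perp \subset L^q(\BR^n)$.

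Write $b_j:=\tu{Im}\,z_0^{(j)}$ and $g_j(\zeta):=(2ib_j/(\zeta-\overline{z_0^{(j)}}))^{2/p}$, with the principal branch (the base has positive real part on $\BC_+$, so the branch is unambiguous). Then $f_{\min}=w_0\prod_j g_j$ satisfies $f_{\min}(z_0)=w_0$ and lies in $H^p(\BC_+^n)$ as a product of one-variable $H^p$ functions; the norm identity follows from $|g_j(x)|^p=4b_j^2/|x-z_0^{(j)}|^2$ together with $\int_\BR dx/((x-a)^2+b^2)=\pi/b$. Using the $L^p$-duality formula (Example \ref{Ex:Lp}) together with the identity $\bar g|g|^{p-2}=|g|^p/g$ applied coordinatewise, a direct computation shows that $\overline{f_{\min}(x)}\,|f_{\min}(x)|^{p-2}$ is a scalar multiple of
\[
\prod_{j=1}^n \frac{1}{(x_j-\overline{z_0^{(j)}})^{1-2/p}(x_j-z_0^{(j)})}.
\]
The decisive manipulation is a residue-type splitting of each factor,
\[
\frac{1}{(\zeta-\overline{z_0^{(j)}})^{1-2/p}(\zeta-z_0^{(j)})}=\frac{c_j}{\zeta-z_0^{(j)}}+r_j(\zeta),\qquad c_j:=(2ib_j)^{-(1-2/p)},
\]
with $c_j$ chosen so that $(\zeta-\overline{z_0^{(j)}})^{-(1-2/p)}-c_j$ vanishes at $\zeta=z_0^{(j)}$; the apparent pole becomes removable, so $r_j$ is holomorphic on $\BC_+$, and a decay estimate at infinity (using $q(2-2/p)=2$) places $r_j\in L^q(\BR)$, hence $r_j\in H^q(\BC_+)$.

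Expanding the product $\prod_j\bigl(c_j/(x_j-z_0^{(j)})+r_j(x_j)\bigr)$, the leading term is a scalar multiple of $K(x,z_0)=(2\pi i)^{-n}\prod_j(x_j-z_0^{(j)})^{-1}$, while every other summand contains at least one factor $r_j$. The main obstacle is to show each such mixed summand lies in $H^p(\BC_+^n)^\perp$, which for $n\ge 2$ is strictly larger than $H^q(\BC_+^n)$. I handle this by the observation that any summand containing $r_j(x_j)$ is holomorphic in $\zeta_j$ on $\BC_+$, since the remaining factors are either independent of $\zeta_j$ or themselves holomorphic there; paired with any $f\in H^p(\BC_+^n)$, the integrand is in $L^1(\BR^n)$ and holomorphic in $\zeta_j$ on $\BC_+$, so integrating first in $x_j$ and closing the contour in the upper half-plane yields zero, and Fubini finishes. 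Therefore $f_{\min}^{\star_{L^p}}$ coincides with a scalar multiple of $K(\cdot,z_0)$ modulo $H^p(\BC_+^n)^\perp$, and Theorem \ref{repr} together with the uniqueness of the minimizer identifies $f_{\min}$ as the unique minimal norm interpolant.
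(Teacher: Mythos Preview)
Your argument is correct and lands on the same conclusion as the paper, but by a genuinely different mechanism. The paper handles all four single-point theorems at once via Corollary~\ref{single} together with Proposition~\ref{Kstar}: it identifies the SIP-kernel as $H(z_0,\cdot)=(K(z_0,\cdot))^{2/p}$, after which the formula for $f_{\min}$ is immediate. The crucial equivalence $\bigl((K(z_0,\cdot))^{2/p}\bigr)^{\star_{L^p}}\sim_{\cZ^p} K(\cdot,z_0)$ is checked in Proposition~\ref{Kstar} not by decomposing the left-hand side, but by pairing it against the spanning family $\{K(\zeta,\cdot):\zeta\in\Omega\}$ and reducing the resulting integral to a single application of the reproducing identity for the function $K(\zeta,\cdot)\,(K(z_0,\cdot))^{1-2/p}$.

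You instead write out $f_{\min}^{\star_{L^p}}$ explicitly, peel off the simple pole at each $z_0^{(j)}$ by a residue-type splitting, and kill the mixed remainder terms using the one-variable orthogonality $H^p(\BC_+)\perp H^q(\BC_+)$ together with slicing. This is more hands-on and tied to the product structure of the poly-half-plane kernel, whereas the paper's route is uniform across the ball, polydisk, Bergman and half-plane settings. On the other hand your decomposition has a small robustness advantage here: it never needs the auxiliary function $K(\zeta,\cdot)\,(K(z_0,\cdot))^{1-2/p}$ to belong to $H^p(\BC_+^n)$, a membership that is not guaranteed on the unbounded domain $\BC_+^n$ for all $p$ since $K(z_0,\cdot)$ is not bounded away from zero there. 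One stylistic point: your phrase ``closing the contour in the upper half-plane'' is really shorthand for the standard facts that $\int_{\BR} hg\,dx=0$ whenever $h\in H^p(\BC_+)$, $g\in H^q(\BC_+)$, and that a.e.\ one-variable slices of an $H^p(\BC_+^n)$ boundary function lie in $H^p(\BC_+)$; it would read more cleanly to invoke these directly rather than a contour that has not been constructed.
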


\subsection{Common threads and proofs}

The setting in the above subsections all correspond to 
taking for $\cX$ an appropriate space $L^p(\widehat\Omega, \tu{d} \mu)$, with $1<p<\infty$, and then considering a  subspace of Hardy space type $H^p(\Omega)$ or Bergman space type $A^p(\Omega)$, where we have some Banach space reproducing kernel. In the case of the Hardy space, $\widehat \Omega$ is the distinguished boundary of $\Omega$, while in the Bergman setting we have $\widehat\Omega = \Omega$. We will denote the full space as $L^p$ and the subspace as $\cZ^p$, write $K:\Om \times \Om \to \BC$ for the reproducing kernel of $\cZ^p$ and ${\widetilde{\cZ^p}}$ for the Banach space of functions on $\Om$ isomorphic to $\cZ'$, as in Section \ref{S:SIPsubspace}. Following the terminology of \cite{RS} for $H^p$ on the unit disc ($n=1$), an {\em extremal function} in $\cZ^p$ corresponds to what is called an extremal point in Section \ref{S:SIPsubspace}, while an {\em extremal kernel} in ${\widetilde{\cZ^p}}$ corresponds to an extremal functional in  Section \ref{S:SIPsubspace}. The following applies to all the above settings.

For $g \in {\widetilde{\cZ^p}}$ we define the corresponding linear map $\varphi_g$ on $\cZ^p$ via
$$ \varphi_g(f):= \int_{\widehat\Omega} f(z) g(z) \tu{d} \mu(z) ,\quad  f \in \cZ^p.$$

\begin{proposition}\label{Hpstars} Let $1<p<\infty$ and $\frac1p+\frac1q=1$. Every $\varphi_g, g \in {\widetilde{\cZ^p}}$, has a unique extremal function and a unique extremal kernel. If $f\in \cZ^p$ is the extremal function for $\varphi_g$, then $k: = \| g \|_{{\widetilde{\cZ^p}}} \overline{f} |f|^{p-2}$ is the extremal kernel for $\varphi_g$. In addition, $k^{\star_{L^q}} =  \| g \|_{{\widetilde{\cZ^p}}} f$.
\end{proposition}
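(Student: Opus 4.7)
The plan is to combine the explicit formula for $\star_{L^p}$ from Example \ref{Ex:Lp} with the abstract existence/uniqueness results of Section \ref{S:SIPsubspace}, and then to finish with a smoothness argument.

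First I would handle existence and uniqueness. For $1<p<\infty$, the space $L^p$ is uniformly convex and smooth (Example \ref{Ex:Lp2}), hence reflexive. The closed subspace $\cZ^p$ inherits strict convexity, smoothness, and reflexivity (hence weak compactness by Kakutani's theorem). Since $L^q\simeq (L^p)'$ is also strictly convex, Proposition \ref{P:UniExtremeFunct} provides a unique extremal kernel $k_*\in L^q$ associated with $\varphi_g$, and Proposition \ref{P:UniExtremePoint}, applied to the nontrivial case $g\neq 0$ (the case $g=0$ being trivial), provides a unique extremal function $f\in\cZ^p$.

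Second, I would show that the explicit candidate $k := \|g\|_{\widetilde{\cZ^p}} \overline{f}|f|^{p-2}$ is the extremal kernel. Because $\|f\|_{L^p}=1$, the formula in Example \ref{Ex:Lp} rewrites this as $k = \|g\|_{\widetilde{\cZ^p}}\, f^{\star_{L^p}}$, and since $\star_{L^p}:L^p\to L^q$ is an isometric bijection, $\|k\|_{L^q} = \|g\|_{\widetilde{\cZ^p}}$. By Proposition \ref{P:UniExtremeFunct} this already matches the extremal infimum, so it suffices to verify that $k\sim_{\cZ^p}\varphi_g$, i.e.\ that the $L^p$-functional $\varphi_k$ agrees with $\varphi_g$ on $\cZ^p$.

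I expect this agreement step to be the main obstacle. My plan is to exploit smoothness. A direct computation gives
\[
\varphi_k(f)=\int_{\widehat{\Omega}} f\cdot \|g\|_{\widetilde{\cZ^p}}\,\overline{f}|f|^{p-2}\,\tu{d}\mu =\|g\|_{\widetilde{\cZ^p}}\|f\|_{L^p}^p=\|g\|_{\widetilde{\cZ^p}}=\varphi_g(f),
\]
and the obvious norm bound $\|\varphi_k|_{\cZ^p}\|_{(\cZ^p)'}\leq \|k\|_{L^q} = \|g\|_{\widetilde{\cZ^p}} = \|\varphi_g\|_{(\cZ^p)'}$ then shows that both $\|g\|_{\widetilde{\cZ^p}}^{-1}\varphi_k|_{\cZ^p}$ and $\|g\|_{\widetilde{\cZ^p}}^{-1}\varphi_g$ are norm-one functionals on $\cZ^p$ which attain the value $1$ at the unit vector $f$. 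A quick Hahn--Banach argument shows that a closed subspace of a smooth Banach space is itself smooth, so such a norming functional is unique; thus $\varphi_k|_{\cZ^p}=\varphi_g$, giving $k\sim_{\cZ^p}\varphi_g$. By uniqueness in Proposition \ref{P:UniExtremeFunct} we conclude $k_*=k$.

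Finally, the identity $k^{\star_{L^q}} = \|g\|_{\widetilde{\cZ^p}} f$ follows immediately from the corollary after Theorem \ref{T:Duality}, which asserts that $\star_{L^q}$ inverts $\star_{L^p}$: since $k=\|g\|_{\widetilde{\cZ^p}}\,f^{\star_{L^p}}$ and $\|g\|_{\widetilde{\cZ^p}}\in\BR$, conjugate-homogeneity of the $\star$-operation yields $k^{\star_{L^q}}=\|g\|_{\widetilde{\cZ^p}}(f^{\star_{L^p}})^{\star_{L^q}}=\|g\|_{\widetilde{\cZ^p}}f$.
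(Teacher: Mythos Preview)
Your argument is correct, but it reaches the key equivalence $\varphi_k|_{\cZ^p}=\varphi_g$ by a different mechanism than the paper. The paper starts from the other side: it takes the (abstractly existing) extremal kernel, observes that it must satisfy $\varphi_k(f)=\|k\|_{L^q}=\|\varphi_g\|$, and then reads off the equality cases in H\"older's inequality to obtain the pointwise relations $f(x)k(x)\ge 0$ and $|k(x)|^{1/p}=\|\varphi_g\|^{1/p}|f(x)|^{1/q}$ a.e.; these two conditions pin $k$ down uniquely as $\|g\|_{\widetilde{\cZ^p}}\overline{f}|f|^{p-2}$. You instead define that candidate directly, check $\|k\|_{L^q}=\|g\|_{\widetilde{\cZ^p}}$ via the isometry $\star_{L^p}$, and then use smoothness of the subspace $\cZ^p$ (inherited from $L^p$ by Hahn--Banach) to conclude that the two unit functionals norming $f$ must coincide. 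Your route is a clean Banach-space-geometry argument that avoids any pointwise analysis; the paper's route is more in the classical extremal-problem tradition of Rogosinski--Shapiro and has the side benefit of making the pointwise identity $fk=\|g\|_{\widetilde{\cZ^p}}|f|^p$ (equation \eqref{fp}) explicit along the way, though of course that identity can also be read off directly from the formula for $k$.
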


\begin{proof} The uniqueness follows from Propositions \ref{P:UniExtremeFunct} and \ref{P:UniExtremePoint}, where we use that $L^p$ and $L^q$ are strictly convex. Next, let $f$ be the extremal function for $\varphi_g$. Then $\| f \|_{\cZ^p} =1$ and $\varphi_g(f)=\| \varphi_g \|_{(\cZ^p)'}$. We now claim that $k\in L^q$ is  the extremal kernel for $\varphi_g$ if and only 
\begin{equation}\label{kernel}
\varphi_k(f)=\| \varphi_g \|_{(\cZ^p)'}= \| k \|_{L^q}.
\end{equation}
Thus
$$ \int_{\widehat\Omega} f(z) k(z) \tu{d}\mu(z)= \int_{\widehat\Omega} |f(z) k(z)| \tu{d}\mu(z)= \|f \|_{L^p} \| k \|_{L^q} = \| k \|_{L^q}. $$
Using the first two equalities, we obtain that
$$ f(x) k (x) \ge 0, |k(x)|^\frac1p = \| \varphi_g \|_{(\cZ^p)'}^\frac1p |f(x)|^\frac1q ,  x \in \widehat\Omega \ \hbox{\rm a.e.} $$
These are exactly the properties that the extremal function and extremal kernel exhibit (see \cite[Theorem 11]{RS} for a particular instance). Note that $k=\| g \|_{{\widetilde{\cZ^p}}} \overline{f} |f|^{p-2}$ has these properties, and therefore it is the unique extremal kernel for $\varphi_g$. Finally, one computes that $k^{\star_{L^q}} =  \| g \|_{{\widetilde{\cZ^p}}} ( \overline{f} |f|^{p-2})^{\star_{L^q}} =  \| g \|_{{\widetilde{\cZ^p}}}f$, where we used that $\| f \|_{\cZ^p}=1$.
\end{proof}

Notice that the above implies that
\begin{equation}\label{fp} fk = \| g \|_{{\widetilde{\cZ^p}}} |f|^p .\end{equation}

\begin{proposition}\label{Kstar} Let $1<p<\infty$. If the reproducing kernel satisfies that $(K( z_1, \cdot))^\frac2p \in \cZ^p$ and $K(z,w)=\overline{K(w,z)}$, then
$$ K(\cdot, z_1)^{\star_{\widetilde{\cZ}^p}}= (K( z_1, \cdot))^\frac2p, $$
or equivalently,
\begin{equation}\label{Kstareq}\left( (K( z_1, \cdot))^\frac2p \right)^{\star_{\cZ^p}}= K(\cdot, z_1). \end{equation}
\end{proposition}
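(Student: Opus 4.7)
The plan is to prove the second (equivalent) form, namely $\left((K(z_1,\cdot))^{2/p}\right)^{\star_{\cZ^p}} = K(\cdot,z_1)$, by computing the $L^p$ duality map applied to $h := (K(z_1,\cdot))^{2/p}$ and then showing the resulting functional agrees with $K(\cdot,z_1)$ on $\cZ^p$. By Proposition \ref{P:substar} the element $h^{\star_{\cZ^p}} \in \widetilde{\cZ^p}$ coincides with $(I-P)h^{\star_{L^p}}$, so I only need to verify that for every $\tilde f \in \cZ^p$ one has $[\tilde f,h]_{\cZ^p} = \tilde f(z_1) = \varphi_{K(\cdot,z_1)}(\tilde f)$; equivalence with the first identity then follows from the Corollary after Theorem \ref{T:Duality}.

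The computation of $h^{\star_{L^p}}$ goes as follows. Using the formula of Example \ref{Ex:Lp}, together with the hypothesis $K(z,w)=\overline{K(w,z)}$ (so that with a consistent branch $\overline{h} = (K(\cdot,z_1))^{2/p}$), one obtains
\[
\overline{h}\,|h|^{p-2} = (K(\cdot,z_1))^{2/p}\bigl(K(z_1,\cdot)K(\cdot,z_1)\bigr)^{(p-2)/p} = K(\cdot,z_1)\,(K(z_1,\cdot))^{(p-2)/p}.
\]
Moreover, the reproducing property of $K$ for $K(z_1,\cdot)\in\cZ^2$ gives $\|h\|_{L^p}^p = \int|K(z_1,\cdot)|^2\,\tu{d}\mu = K(z_1,z_1)$. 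Hence
\[
h^{\star_{L^p}} = \frac{K(\cdot,z_1)\,(K(z_1,\cdot))^{(p-2)/p}}{K(z_1,z_1)^{(p-2)/p}}.
\]

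Since the semi-inner product on $\cZ^p$ is the restriction of that on $L^p$, for any $\tilde f\in\cZ^p$
\[
[\tilde f,h]_{\cZ^p} = \frac{1}{K(z_1,z_1)^{(p-2)/p}}\int \tilde f(w)\,K(w,z_1)\,(K(z_1,w))^{(p-2)/p}\,\tu{d}\mu(w).
\]
The decisive step is then to apply the reproducing identity to the auxiliary function $\tilde g(w) := \tilde f(w)\,(K(z_1,w))^{(p-2)/p}$. This function is holomorphic on $\Omega$ (by choosing the principal branch of $K(z_1,\cdot)^{(p-2)/p}$, which exists because $K(z_1,\cdot)$ is nowhere zero for the Szeg\H{o}/Cauchy/Bergman kernels) and, by H\"older's inequality with exponents $p$ and $q$, lies in $L^1$. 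In each of the concrete settings (Hardy/Bergman on $B_n$, polydisk, upper half-plane) the reproducing identity $\tilde g(z_1) = \int \tilde g(w)K(w,z_1)\,\tu{d}\mu(w)$ is valid for all such $\tilde g\in\cZ^1$, so
\[
\int \tilde f(w)\,K(w,z_1)\,(K(z_1,w))^{(p-2)/p}\,\tu{d}\mu(w) = \tilde g(z_1) = \tilde f(z_1)\,K(z_1,z_1)^{(p-2)/p}.
\]
Substituting back yields $[\tilde f,h]_{\cZ^p} = \tilde f(z_1)$, which is the desired identity.

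The main obstacle I anticipate is the rigorous justification of the last reproducing step: one must argue that the product $\tilde g = \tilde f\cdot (K(z_1,\cdot))^{(p-2)/p}$ belongs to the appropriate $\cZ^1$-type subspace and not merely to $L^1$. This requires (i) a branch choice making the fractional power of the kernel holomorphic, and (ii) invoking an $L^1$-reproducing identity for the Hardy/Bergman kernel, which is standard in each of the domains considered but needs to be cited for each case separately. Once this is dispatched the rest of the argument is a direct calculation built on the Giles formula for the $L^p$ semi-inner product and Proposition \ref{P:substar}.
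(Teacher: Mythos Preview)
Your proposal follows essentially the same route as the paper: compute $h^{\star_{L^p}}$ explicitly from the $L^p$ duality formula (obtaining $K(\cdot,z_1)(K(z_1,\cdot))^{1-2/p}/K(z_1,z_1)^{1-2/p}$) and then verify that this functional agrees with $K(\cdot,z_1)$ on $\cZ^p$. The only real difference lies in how that last verification is carried out. You pair against an arbitrary $\tilde f\in\cZ^p$, which forces you to place $\tilde g=\tilde f\,(K(z_1,\cdot))^{1-2/p}$ in a $\cZ^1$-type space and invoke an $L^1$ reproducing identity --- precisely the obstacle you flag, and one your H\"older step does not fully dispatch (for instance on the half-line the measure is infinite and $(K(z_1,\cdot))^{(p-2)/p}\notin L^q$, so the pairing $\tilde f\in L^p$ against it does not immediately give $L^1$). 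The paper instead exploits that both $h^{\star_{L^p}}$ and $K(\cdot,z_1)$ define bounded functionals on $\cZ^p$, so it suffices to test on the dense spanning set $\{K(\zeta,\cdot):\zeta\in\Omega\}$; the auxiliary function then becomes the explicit product $K(\zeta,\cdot)(K(z_1,\cdot))^{1-2/p}$, which the paper asserts lies in $\cZ^p$, whereupon the reproducing property already assumed for $\cZ^p$ applies directly. This density trick keeps the whole argument inside $\cZ^p$ and removes the need for any separate $\cZ^1$ theory, so it is the cleaner way to close the gap you anticipated.
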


\begin{proof} 
We prove the second equality. Let us start by noting the following:
$$ \| (K( z_1, \cdot))^\frac2p \|_{\cZ^p}^p= \| (K( z_1, \cdot))^\frac2p \|_{L^p}^p = \int_{\widehat\Omega} |K(z_1,w)|^2\tu{d}\mu(w) = $$ $$\langle K(z_1,\cdot ) , K(\cdot , z_1) \rangle = K(z_1,z_1).$$
Next, we perform the following computation:
$$ \left( (K( z_1, \cdot))^\frac2p \right)^{\star_{L^p}}= \frac{1}{\| (K( z_1, \cdot))^\frac2p\|^{p-2}_{L^p}}\overline{(K( z_1, \cdot))^\frac2p} | (K( z_1, \cdot))^\frac2p|^{p-2} =
$$
$$ \frac{1}{ K( z_1, z_1)^{\frac{p-2}{p}}}
\overline{(K( z_1, \cdot))^{\frac2p+\frac{p-2}{p}}} (K( z_1, \cdot))^{\frac{p-2}{p}}= $$
$$ \frac{1}{ K( z_1, z_1)^{1-\frac2p}}
{K(  \cdot, z_1)} (K( z_1, \cdot))^{1-\frac{2}{p}}=: g,$$
where we used that $\overline{K( z_1, \cdot)} = K(\cdot , z_1)$.
To prove \eqref{Kstareq}
we need to show that 
$$ g \sim_{\cZ^p} K(  \cdot , z_1 ). $$
As $\cZ^p = \overline{\rm span} \{K( \zeta , \cdot): \zeta \in \Omega \},  $
it suffices to prove that 
\begin{equation}\label{gK}   
\langle  K( \zeta , \cdot) , g\rangle = \langle  K( \zeta , \cdot) , K( \cdot, z_1) \rangle \end{equation} for all $\zeta \in \Omega$. The right hand side of \eqref{gK} equals $K(\zeta, z_1)$.  
For the left hand side of \eqref{gK} we compute 
$$ \langle  K( \zeta , \cdot), g \rangle = \frac{1}{ K( z_1, z_1 )^{1-\frac2p}}{\int_{\widehat\Omega} {K(w, z_1)} K(\zeta, w)(K( z_1, w))^{1-\frac{2}{p}}}\tu{d}\mu(w)= $$
$$ \frac{1}{ K( z_1, z_1 )^{1-\frac2p}} \langle   K(\zeta, \cdot)(K( z_1, \cdot))^{1-\frac{2}{p}}, K(\cdot, z_1) \rangle = $$
$$ \frac{1}{ K( z_1, z_1 )^{1-\frac2p}} K(\zeta, z_1)(K( z_1, z_1))^{1-\frac{2}{p}} = K(\zeta,z_1),$$
where in the last step we used $K(\zeta, \cdot)(K( z_1, \cdot))^{1-\frac{2}{p}} \in \cZ^p$. Thus we have proven $ g \sim_{\cZ^p} K(  \cdot , z_1 ) $, finishing the proof of \eqref{Kstareq}.
\end{proof}

{\em Proof of Theorems \ref{single2}, \ref{single3}, \ref{single4}, and \ref{single5n}.} Combine Corollary \ref{single} (including the observation $H(z_0,\cdot )^{\star_\cX} = K(\cdot, z_0) $) with Proposition \ref{Kstar}. Note that in all cases $K(\cdot, z_0)$ is bounded away from 0 on $\Omega$, and thus $(K( z_1, \cdot))^\frac2p \in \cZ^p$ holds in all cases.

To compute the norm, we use the reproducing kernel property. Let us illustrate this for Theorem \ref{single3}.
We have
$$ \| f_\tu{min} \|^p_{A_n^p} = |w_0|^p \int_{B_n}
\left| \frac{1-\|z_0\|_{\rm Eucl}^2}{1-\langle z, z_0 \rangle_{\rm Eucl}}  \right|^{2(n+1)}\tu{d}\nu(w) = $$ $$|w_0|^p (1-\|z_0\|_{\rm Eucl}^2)^{2(n+1)} \langle  K_{\rm B} (z_0, \cdot ), K_{\rm B} (\cdot , z_0) \rangle =  $$ $$|w_0|^p (1-\|z_0\|_{\rm Eucl}^2)^{2(n+1)} K_{\rm B} (z_0, z_0 )= |w_0|^p (1-\|z_0\|_{\rm Eucl}^2)^{n+1}. $$ Now take the $p$th root.
\hfill $\Box$

\medskip
Let us also rephrase the first representer theorem (Theorem \ref{repr}) in the current context.

\begin{theorem}\label{repr2}
Consider $\cZ^p$ as before and let $K$ be its reproducing kernel. Let ${\bf z}=\{ z_1, \ldots , z_k \}$, with $z_j\neq z_l$ when $j\neq l$, and ${\bf s}=\{ s_1, \ldots , s_k \}$ be as in Problem \ref{P:Interpolation}. Then there exists a unique $f_\tu{min}\in \cZ^p$ with $f(z_i)=s_i$, $i=1,\ldots , k$, such that 
\[
\|f_\tu{min}\|=\min \{\|f\| \colon f\in \cZ^p\ \hbox{\rm with} \ f(z_i)=s_i, i=1,\ldots , k\}.
\]
Moreover, we have that 
$f_\tu{min}^{\star_{\cZ^p}}\in \tu{span} \{ K(\cdot, z_i), i=1,\ldots , k \}$. 
\end{theorem}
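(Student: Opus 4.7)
The plan is to obtain Theorem \ref{repr2} as a direct specialization of Theorem \ref{repr}, taking the SIP-RKBS to be $\cX = \cZ^p$ itself and then translating the conclusion from the SIP-reproducing kernel $H$ of $\cZ^p$ into the reproducing kernel $K$ of $\cZ^p$ via the identity $K(\cdot,z) = (H(z,\cdot))^{\star_{\cZ^p}}$ supplied by Theorem \ref{T:SIP-RKBS}. Thus the work reduces to checking two hypotheses: that $\cZ^p$ qualifies as a SIP-RKBS, and that the linear independence assumption of Lemma \ref{exists} is met.

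For the first, I would observe that $\cZ^p$ is by construction a Banach space of functions on $\Om$ with continuous point evaluation and reproducing kernel $K$, and the dual has been identified with the Banach space of functions $\widetilde{\cZ^p}$ in the preceding subsections. What remains is the uniform convexity and uniform Fr\'{e}chet differentiability. Both properties are inherited by closed subspaces of a Banach space possessing them (the relevant moduli for the subspace are dominated by those of the ambient space). Since $\cZ^p$ is a closed subspace of $L^p(\widehat\Om,\mu)$ with $1<p<\infty$, Example \ref{Ex:Lp2} gives both properties for $L^p$ and thus for $\cZ^p$. Hence $\cZ^p$ is a SIP-RKBS in the sense of Definition \ref{D:SIP-RKBS}.

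For the linear independence requirement, I would argue that $\{K(\cdot,z_1),\ldots,K(\cdot,z_k)\}$ is linearly independent in $\widetilde{\cZ^p} \simeq (\cZ^p)'$. Via the identification, $K(\cdot,z_i)$ acts on $\cZ^p$ as the evaluation functional $f \mapsto f(z_i)$. If $\sum_{i=1}^k c_i K(\cdot,z_i) = 0$ in $(\cZ^p)'$, then $\sum_i c_i f(z_i) = 0$ for every $f \in \cZ^p$. Since the $z_j$ are distinct and each of the spaces $\cZ^p$ treated in Sections 4.1--4.4 contains polynomials (or restrictions thereof) which separate any finite subset of $\Om$, one may solve for any prescribed values $f(z_j) = \delta_{ij}$ and conclude $c_i = 0$ for each $i$. (Equivalently, by Theorem \ref{T:SIP-RKBS} the Gram-type matrix $[K(z_j,z_i)]_{i,j=1}^k = [H(z_i,z_j)]_{i,j=1}^k$ acts injectively on the coefficient vector when the $z_i$ are distinct.) Thus the hypothesis of Theorem \ref{repr} is satisfied.

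Having both hypotheses, Theorem \ref{repr} applied to $\cX = \cZ^p$ delivers a unique minimum-norm interpolant $f_\tu{min} \in \BI_{\bf z,s}$ with $f_\tu{min}^{\star_{\cZ^p}} \in \tu{span}\,H(\mathbf{z},\cdot)^{\star_{\cZ^p}}$. Using $K(\cdot,z_i) = (H(z_i,\cdot))^{\star_{\cZ^p}}$ from Theorem \ref{T:SIP-RKBS}, this span coincides with $\tu{span}\{K(\cdot,z_i) : i=1,\ldots,k\}$, which is the asserted conclusion. I expect the one mildly delicate point to be the uniform Fr\'{e}chet differentiability of $\cZ^p$: one must ensure this passes to closed subspaces, but since the G\^{a}teaux (and hence Fr\'{e}chet) derivative of the norm of $\cZ^p$ is just the restriction of the derivative of the $L^p$-norm to $\cZ^p \times \cZ^p$, the inheritance is straightforward and there is no real obstacle.
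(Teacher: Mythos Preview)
Your proposal is correct and matches the paper's intent: the paper introduces Theorem \ref{repr2} with the sentence ``Let us also rephrase the first representer theorem (Theorem \ref{repr}) in the current context'' and gives no separate proof, so what you have written is precisely the verification that this rephrasing is legitimate. Your checks that $\cZ^p$ inherits uniform convexity and uniform Fr\'{e}chet differentiability from $L^p$, that distinctness of the $z_i$ forces linear independence of the evaluation functionals $K(\cdot,z_i)$, and that $\tu{span}\,H(\mathbf{z},\cdot)^{\star_{\cZ^p}} = \tu{span}\{K(\cdot,z_i)\}$ via Theorem \ref{T:SIP-RKBS}, are exactly the details the paper leaves implicit.
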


\subsection{Other examples} There are many other settings where the theory developed in this paper can be applied. We provide some here.

\subsubsection{Weighted Bergman space on the disk}
Let $\alpha > -1$. 
By the weighted Bergman space $A^p_\alpha$ on the disk we mean the space of all functions $f$ that are analytic on $\BD$ such that
$$ \|f\|_{A^p_\alpha} := \left( (\alpha+1)\int_\mathbb{D} |f(z)|^p \, (1-|z|^2)^\alpha \tu{d}\nu (z) \right)^{1/p} < \infty. $$
Here the kernel is given by $$K_{{\rm B}, \alpha} (w,z) = \frac{\alpha+1}{(1-\overline{w}z)^{\alpha+2}} \; \; \; \; \; z, w \in \mathbb{D}.$$
The multivariable case is given similarly. For more details, see for instance \cite{DurenBergman}.

\subsubsection{Weighted Bergman space on the right half-plane} Let $\alpha > -1$ and $\BC_{{\rm realpos}}$ be the open right half-plane. The weighted Bergman space $A^p_\alpha (\BC_{{\rm realpos}})$ on the right half-plane is the space of all functions $f$ that are analytic on $\BC_{{\rm realpos}}$ such that
$$ \|f\|_{A^p_\alpha(\BC_{{\rm realpos}})} := \left( \frac{1}{\pi}\int_{\BC_{{\rm realpos}}} |f(x+iy)|^p x^\alpha \, dx \, dy \right)^{1/p}. $$
The reproducing kernel (see \cite{Elliott}) is
$$K(w,z) = \frac{2^\alpha(\alpha+1)}{(\overline{w}+z)^{\alpha+2}} \; \; \; \; \; z,w  \in \mathbb{C}_{\rm realpos}.$$
%\footnote{Let us mention that the weighted Bergman space on the half-plane space is isometrically isomorphic (see \cite{DGM}), via the Laplace transform, to the space $L^2(\mathbb{R}_+, \, d\mu_\alpha)$, where $$ d\mu_\alpha = \frac{\Gamma(\alpha+1)}{2^\alpha t^{\alpha+1}} \, dt, $$ and $\Gamma$ denotes the Gamma function.} 
When $\alpha=0$ we get the traditional Bergman space on the half-plane.  

\subsubsection{Harmonic Bergman functions on half-spaces}

We follow the presentation in \cite{Ramey}. 
The upper half-space $H_n$ is the open subset of $\BR^n $ given by
$$H_n = \{ (x, y) \in \BR^n : x \in \BR^{n-1}, y > 0 \}. $$
Let $p \in (1, \infty ). $ We have let $\tu{d}V$ denote the Lebesgue volume measure on $H_n$. The harmonic Bergman space $b^p(H_n)$ on $H_n$ consists of harmonic functions $u$ on $H_n$ such that
$$ \| u \|_{b^p(H_n)} =\left( \int_{H_n} |u|^p \tu{d}V \right)^\frac1p <\infty .$$ Due to \cite[Proposition 8.3]{ABR} this is a Banach space (of functions) over $\BR$.
Here the reproducing kernel is given by
$$ R(z,w)=\frac{4}{nV(B_{{\rm real},n})} \frac{n(z_n+w_n)^2-|z-\hat{w}|^2}{|z-\hat{w}|^{n+2}},$$
where $\hat{w} = (w_1,\ldots , w_{n-1}, -w_n)$ and $B_{{\rm real},n}$ is the unit ball in $\BR^n$;
see \cite[Theorem 8.22]{ABR}. For $f \in L^p(H_n, \tu{d}V)$ we let
$$ \Pi f (z) = \int_{H_n} f(w) R(z,w)\tu{d} w. $$
We now have that $\Pi : L^p(H_n, \tu{d}V) \to b^p(H_n)$ is a bounded projection \cite[Theorem 3.2]{Ramey}. In addition, due to \cite[Theorem 3.3]{Ramey} we have that the dual space of $b^p(H_n)$ can be identified with $b^q(H_n)$, where as usual $\frac1p+\frac1q = 1$. These results give us the tools to apply the theory of the previous sections (but now over the reals). 

\subsubsection{$\ell^p$, $1<p<\infty$, with a change of basis}\label{lpS}

This example has a different flavor from the previous ones, but still provides an example where the general theory can be applied.

Let $p\in(1,\infty)$ and consider the Banach space
$$ \ell^p=\left\{x=(x_j)_{j=1}^\infty : x_j\in \BC, \| x \|_{\ell^p}:= \bigg( \sum_{j=1}^\infty |x_j|^p \bigg)^\frac1p <\infty \right\}.$$
Let $S: \ell^p \to \ell^p$, $1<p<\infty$, be an invertible map. On $\ell^p$ we define the norm
$$ \| x \|_{\ell^p_S} := \| Sx \|_{\ell^p}. $$
The resulting Banach space we denote by $\ell^p_S$.
The dual space is isometrically isomorphic to $\ell^q_{(S^T)^{-1}}$, with $1<q<\infty$ such that $\frac{1}{p}+\frac{1}{q}=1$, endowed with the 
norm 
$$ \| y \|_{\ell^q_{(S^T)^{-1}}}:= \| (S^T)^{-1} y \|_{\ell^q}.$$
Here $S^T: \ell^q \to \ell^q$ is the transpose of $S$, i.e., $\inn{e_i}{S^T(e_j)}= \inn{Se_i}{e_j}$, where $e_i$ is the sequence with a 1 in spot $i$ and zeroes elsewhere. We will use the shorthand notation $S^{-T} := (S^T)^{-1}$. The 
duality pairing is 
\[
\inn{x}{y}=\sum_{i=1}^\infty x_i y_i ,\quad x\in \ell^p_S, y\in \ell^q_{S^{-T}}.
\] 
The semi-inner product of $\ell^p$ is unique, and the $\star$-operation associating the semi-inner product and the duality pairing is given by 
\[
x^{\star_{\ell^p_S}}= S^T [(Sx)^{\star_{\ell^p}}].
\]
It then follows that the semi-inner product on $\ell^p_S$ is given by 
\[
[x,y]=\inn{x}{y^{\star_{\ell^p_S}}}.
\]  

Let us next illustrate Theorem \ref{repr}. In this case $\Omega=\BN$ and $(H(j,\cdot ))^{\star_\cX}= K(\cdot , j) = e_j$, which is the $j$th standard basis vector. Observe that this yields $$ H(i,\cdot )= S^{-1} [(S^{-T} e_i)^{\star_{\ell^q}}].$$

When we take finite sequences $x=(x_i)_{i=1}^n$ and $S$ an invertible $n\times n$ matrix, we denote the space by $\ell^p_{n,S}$. The above considerations, with obvious minor modifications, all go through for this setting as well.

\section{Optimal interpolation examples and numerical results}\label{S:num}

In this section we illustrate how the theory developed in the previous sections can be used to compute minimal norm interpolants numerically.

\subsection{Optimal interpolation in $H^p(\BD )$, $1<p<\infty$.}In this subsection we show how we can apply the representer theorem for SIP-RKBS to algorithmically solve the optimal interpolation problems in the Hardy space $H^p (\BD )$ on the disk. 

In general, it is hard to determine $ g^{\star_{{\widetilde{H^p(\BD)}}}}$. However, in the setting of the representer theorem with a rational reproducing kernel we may use that $g$ is a rational function. Thus we may apply the results of \cite{MR50}; see also \cite[Section 8.4]{D70} and \cite{RS, K, BK12}. This yields the following procedure.

{\em Procedure to find $ g^{\star_{{\widetilde{H^p(\BD)}}}}$.} We now have the following procedure to find $ g^{\star_{{\widetilde{H^p(\BD)}}}}$ in case $ g \in {\widetilde{H^p(\BD)}}$ has a finite number of poles $\beta_i$, $i=1,\ldots, n$, in ${\mathbb D}$. Let $f= g^{\star_{{\widetilde{H^p(\BD)}}}}$ and $k$ the corresponding extremal kernel for $\varphi_g$, which will have the same poles as $g$. In this case, by \cite[Lemma in Section 8.4]{RS}, we have that
$$ f(e^{it})k(e^{it}) = | \sum_{i=1}^n \frac{c_i}{1-\overline{\beta_i} e^{it}} |^2 \ (= \| g \|_{{\widetilde{H^p(\BD)}}} |f(e^{it})|^p) $$
for some $c_i\in{\mathbb C}$. As an aside, we mention that similar results for the Bergman space have so far been more elusive, as for instance Conjecture 1 in \cite{KS97} shows. Then for $z$ in a neighborhood of ${\mathbb T}$ we have that
$$ f(z) k(z) = \left( \sum_{i=1}^n \frac{c_i}{1-\overline{\beta_i} z} \right)  \left( \sum_{i=1}^n \frac{\overline{c_i}z}{z-{\beta_i} } \right). $$ This yields that 
$$ f(z) = B(z) \left( \frac{\sum_{i=1}^n \frac{c_i}{1-\overline{\beta_i} z}}{B(z)} \right)^\frac2p, $$
where $B$ is the Blaschke product with the same roots as $\gamma(z):= \sum_{i=1}^n \frac{c_i}{1-\overline{\beta_i} z}$. Indeed, the above is derived by observing that the equality  $|\gamma|^2=\| g \|_{{\widetilde{H^p}}} |f|^p$ determines the outer part of $f\in H^p(\BD)$; see also \cite[Formulas (1.3.5-6)]{MR50}.
The unknowns $c_i$ should now be chosen so that $g-f^{\star_{L^p(\BT)}} \in zH^q(\BD)$. 
We thus find that 
$$ g - \frac{1}{\|f \|_{L^p(\BT)}^{p-2} B}  (\overline{\gamma} B )^\frac2p (|\gamma|^\frac2p)^{p-2} = $$
$$ g- \frac{1}{\|f \|_{L^p(\BT)}^{p-2} B} (\overline{\gamma} B )^\frac2p \left( \overline{\gamma} B\right)^{\frac{p-2}{p}} \left( \frac{\gamma}{B} \right)^{\frac{p-2}{p}} 
=  g-\frac{1}{\|f \|^{p-2}_{L^p}}\overline{\gamma}  \left( \frac{\gamma}{B} \right)^{1-\frac2p} \in zH^q(\BD).$$
Using that $\overline{\gamma}(z)= \sum_{i=1}^n \frac{\overline{c_i}z}{z-{\beta_i} }$ and absorbing the factor $\frac{1}{\|f \|_{L^p}}$ in the unknown constants, we need to choose $d_i\in {\mathbb C}$ so that
$$ g(z) - \left( \sum_{i=1}^n \frac{\overline{d_i}z}{z-{\beta_i} } \right) \left( \frac{1}{B(z)} \sum_{i=1}^n \frac{d_i}{1-\overline{\beta_i} z} \right)^{1-\frac2p} \in zH^q(\BD) , $$
where $B$ is the Blaschke product with the same roots as $\sum_{i=1}^n \frac{d_i}{1-\overline{\beta_i} z}$.
In other words, the residues of $\frac{g(z)}{z}$ and 
$$\frac{k(z)}{z}:= \left( \sum_{i=1}^n \frac{\overline{d_i}}{z-{\beta_i} } \right) \left( \frac{1}{B(z)} \sum_{i=1}^n \frac{d_i}{1-\overline{\beta_i} z} \right)^{1-\frac2p}$$ at the poles $\beta_i$ should coincide.
We then have that $g^{\star_{{\widetilde{H^p(\BD)}}}} = f=k^{\star_{L^q(\BT)}}$ and 
$$\| g \|_{\widetilde{H^p(\BD)}} =\| \varphi_g\|_{H^p(\BD)'} = \| \varphi_k \|_{H^p(\BD)'} = \| k \|_{L^q}= \left( \frac{1}{2\pi} \int_0^{2\pi} |\sum_{i=1}^n \frac{d_i}{1-\overline{\beta_i} e^{it}}|^2 dt \right)^{\frac1q} . $$

For the case when there are several interpolation conditions we can recover the procedure from \cite{CMS94} by applying the above results as follows.

We seek $f \in {H^p(\BD)}$ of minimal norm so that $f(z_i)=w_i$, $i=1,\ldots , n$. Call the optimal function $f_{\rm min}$.
By Theorem \ref{repr} we know that $f_{\rm min}^{\star_{H^p(\BD)}} \in {\rm span} \{  \frac{1}{1-\overline{z_i}z} : i=1,\ldots, n \}$. Using now the procedure from the previous section, we arrive at the following.

{\em Procedure for finding the minimal norm interpolant in $H^p(\BD)$:}

\noindent For $d_i\in\BC, i=1, \ldots ,n$, consider $$f(z)=B(z) \left( \frac{\sum_{i=1}^n \frac{d_i}{1-\overline{z_i}z}}{B(z)} \right)^\frac2p,$$ where $B$ is the Blaschke product with the same roots as $\sum_{i=1}^n \frac{d_i}{1-\overline{z_i}z}.$ We now need to determine the values of $d_i \in \BC$, $i=1,\ldots, n$, so that  $f(z_i)=w_i$, $i=1,\ldots , n$. The resulting function $f$ equals $f_{\rm min}=f_{\rm min}^{[p]}$, where we added the superscript $[p]$ to emphasize that $f_{\rm min}$ depends on $p$.

When we apply the above procedure with $z_1=\frac12, z_2=-\frac13, z_3=\frac{i}{4}$, $w_1=1,w_2=0.9, w_3=0.8$ and letting $p\in[1.7,2.6]$ we obtain the graph in Figure \ref{fig:enter-label0} for $\| f_{\rm min}^{[p]} \|_{H^p(\BD)}$ as a function of $p$: 
\begin{figure}[h]
    \centering
   \includegraphics[width=5in, height=3in]{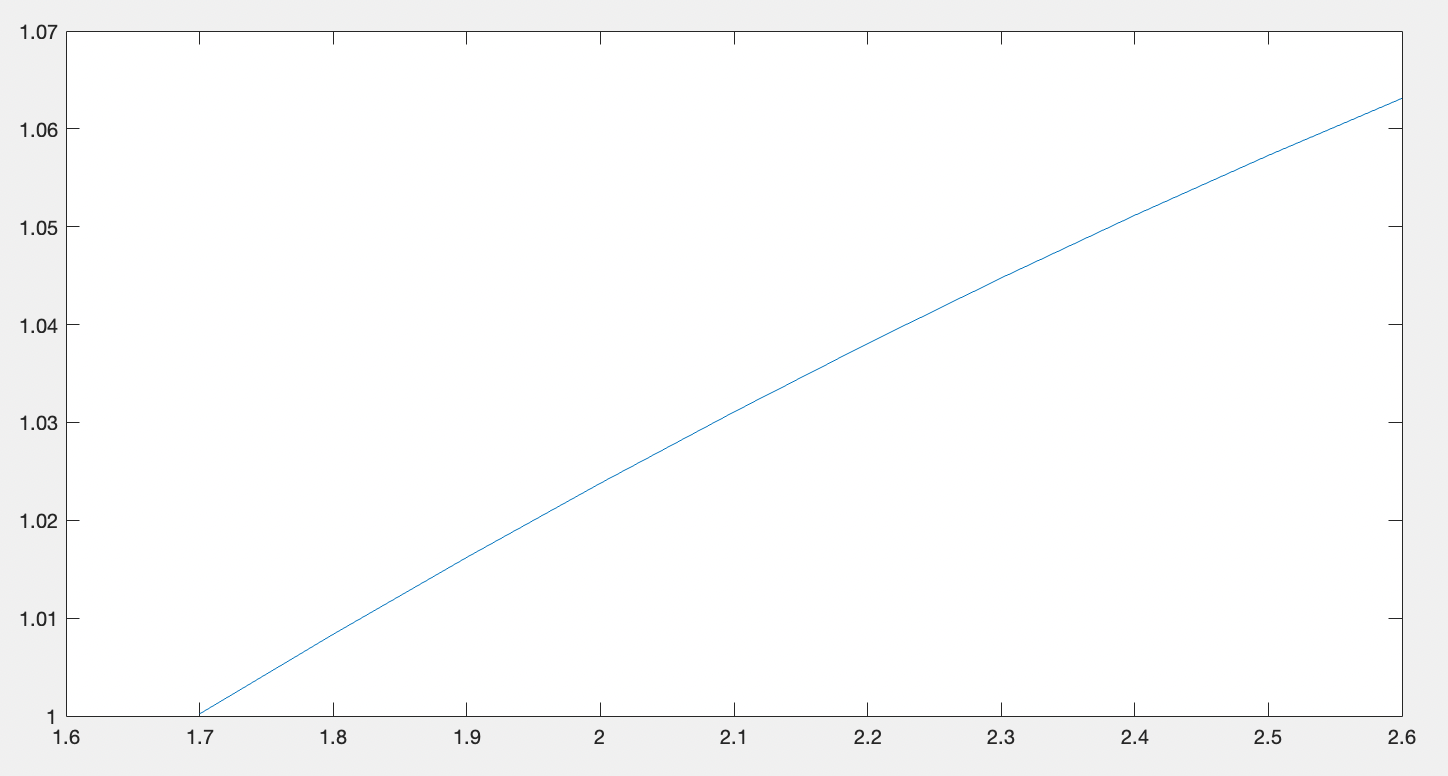}
    \caption{Value of $\| f_{\rm min}^{[p]} \|_{H^p(\BD)}$ as a function of $p$.}
    \label{fig:enter-label0}
\end{figure}

\begin{remark}
If $\mu$ is a finite measure, $1<p\le r<\infty$ and $f\in L^p(\BT)=L^p(\BT) \cap L^r(\BT) $,
we have that 
\begin{equation}\label{prineq} \| f \|_{L^p(\BT) } \le \mu(\Omega)^{\frac1p -\frac1r} \| f \|_{L^r(\BT) }. \end{equation} In the above we have that $\mu$ is a probability measure. 
Now, as both  $f_\tu{min}^{[p]}$ and  $f_\tu{min}^{[r]}$ are interpolants, we obtain for $p\le r$ that
$$ \| f_\tu{min}^{[p]} \|_{H^p(\BD)} \le \| f_\tu{min}^{[r]} \|_{H^p(\BD)} \le  \| f_\tu{min}^{[r]} \|_{H^r(\BD)},  $$ where in the first inequality we use that $f_\tu{min}^{[p]}$ is the optimal solution in the $H^p(\BD)$ norm and in the second inequality we use \eqref{prineq}.  This explains why the graph in Figure \ref{fig:enter-label0} is increasing.
\end{remark}

Finally, let us mention that we only let $p$ vary in the interval $[1.7,2.6]$ as the numerical results were less reliable beyond this region. We suspect that this is due to some of the roots (needed for $B(z)$) approaching the boundary of the disk when $p$ moves outside this region. To analyze this in more depth will be a separate project.

\subsection{Optimal interpolation in $\ell^p_{n,S}$.}

We use the finite dimensional version of Subsection \ref{lpS}.

We have that $\Omega = \{ 1, \ldots , n \}$ and we choose $z_1, \ldots, z_k \in \Omega$. For notational convenience, let us assume that $z_1=1, \ldots , z_k=k$ are the interpolation points. Thus we are looking for a minimal norm solution to $x\in \ell^p_{n,S}$ with $x(j)=s_j$, $j=1, \ldots , k$. Theorem \ref{repr} now tells us that the optimal solution satisfies
$$x_\tu{min}^{\star_\cX}=c_1 e_1+ \cdots + c_k e_k,$$
for some $c_1,\ldots , c_k \in {\mathbb C}$. In this case, we have
\begin{equation}\label{xmin} x_\tu{min} = S^{-1}[(S^{-T}(c_1 e_1+ \cdots + c_k e_k))^{\star_q}].\end{equation} Note that when $q$ is an even integer, the equations the interpolation conditions give are polynomial of total degree $q-1$ in $c_j$ and $\overline{c_j}$, $j=1,\ldots, n$.
%If we take for instance $p=\frac43$, then $q=4$, which then gives us that
%$$x_\tu{min} = S^{-1}[(S^{-T}(d_1 e_1+ \cdots + d_k e_k))\circ (S^{-T}(d_1 e_1+ \cdots + d_k e_k)) \circ \overline{(S^{-T}(d_1 e_1+ \cdots + d_k e_k))}],$$ where conveniently we have absorbed the division by the norm in the unknown coefficients $c_j$ and renamed them $d_j$, $j=1,\ldots k.$ Insisting now that $x_\tu{min}(j)=s_j$, $j=1,\ldots , k,$ yields $k$ with $k$ unknowns $d_j$, $j=1,\ldots ,k$. The unknowns appear in a cubic way.

To illustrate the above numerically, we implemented the above procedure in MATLAB as follows. This involved writing a function that takes as input  $c_1,\ldots ,c_k$, and then it produces $ (x_\tu{min})_1-s_1, \ldots , (x_\tu{min})_k-s_k$, where $x_\tu{min}$ is as in \eqref{xmin}. Subsequently, we use the MATLAB command 'fsolve' to find the $c_1,\ldots ,c_k$ that produces all zeros as the outcome of the aforementioned function.
For the finite dimensional case with $n=4$,
$$ S = \frac14 \begin{bmatrix} 1 & 1 & 1 & 1 \cr 1 & -1 & 1 & -1 \cr 1 & 1 & -1 & -1 \cr 1 & -1 & -1 & 1 \end{bmatrix} $$ and interpolation conditions given by $k=3$ and $s_1=1, s_2=2, s_3=3$, we find the results below. The first graph (in Figure \ref{fig:enter-label}) has on the $x$-axis the value of $p$ and on the $y$-axis the value of $(x_{\rm min}^{[p]})_4$. The lowest value of $p$ is $1.02$.
\begin{figure}[h]
    \centering
   \includegraphics[width=5in, height=3in]{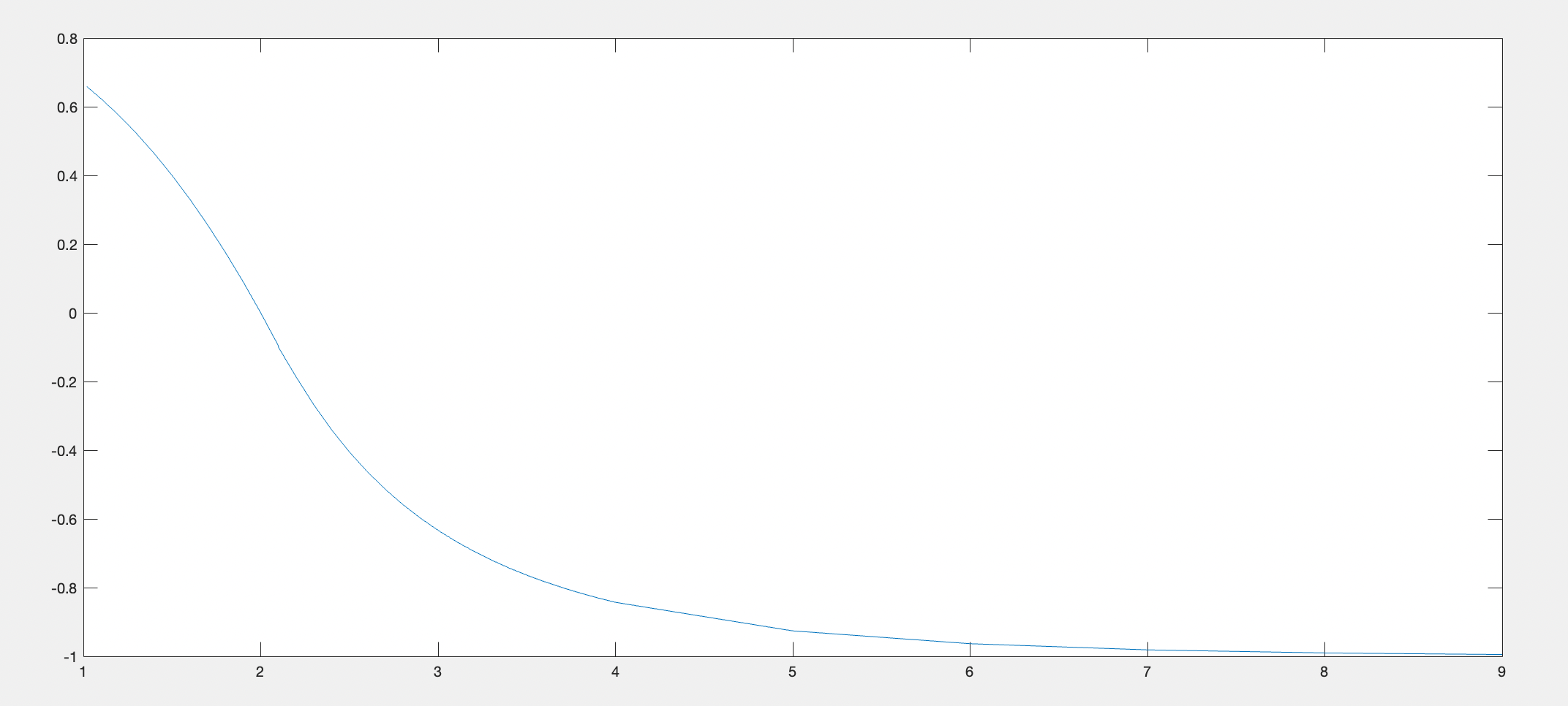}
    \caption{Value of $(x_{\rm min}^{[p]})_4$ as a function of $p$.}
    \label{fig:enter-label}
\end{figure}
The second graph (in Figure \ref{fig:enter-label2}) shows the minimal norm of the minimal interpolant as a function of $p$.
\begin{figure}[h]
    \centering
   \includegraphics[width=5in, height=3in]{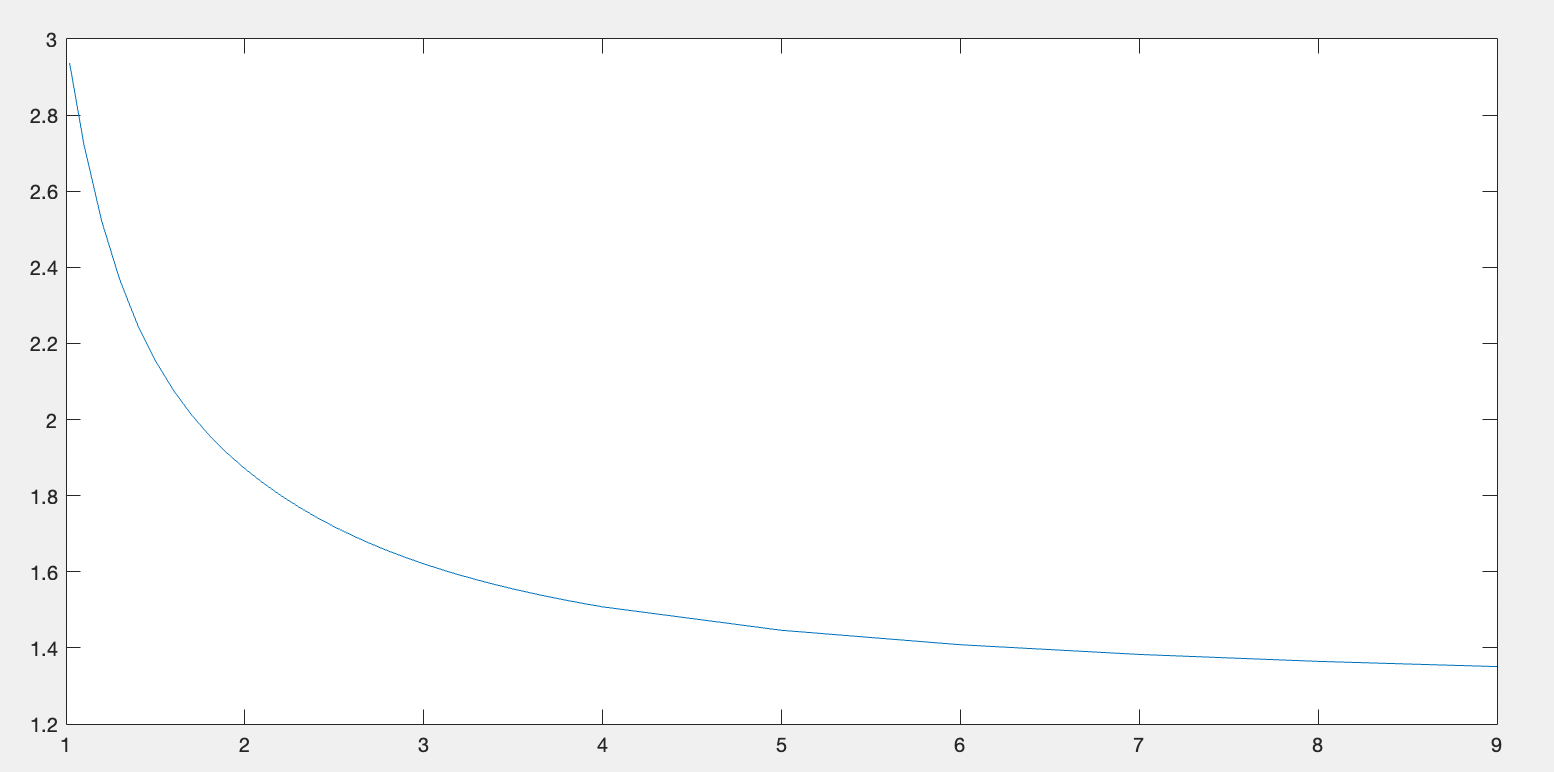}
    \caption{Value of $\| x_{\rm min}^{[p]} \|_{\ell^p_{4,S}}$ as a function of $p$; here $S$ is $4 \times 4$.}
    \label{fig:enter-label2}
\end{figure}

We note that for $p=1$, any value in $[0,4]$ is a possible $(x_{\rm min}^{[p]})_4$ (not unique!), and the optimal norm is 3.5. When $p=\infty$, we have 
$(x_{\rm min}^{[p]})_4=-1$ and the optimal norm is 1.25.

Next we took a $16\times 16$ matrix $S$ so that $S^{-1}$ is the tridiagonal Toeplitz with main diagonal entries equal to 2, and the sub/super-diagonal entries equal to 1. The interpolation conditions are $x(1)=1, x(6)=2, x(11)=3$. We let $p$ range from 1.05 to 10. The results are in Figure \ref{fig:enter-label3}
\begin{figure}[h]
    \centering
   \includegraphics[width=5in, height=3in]{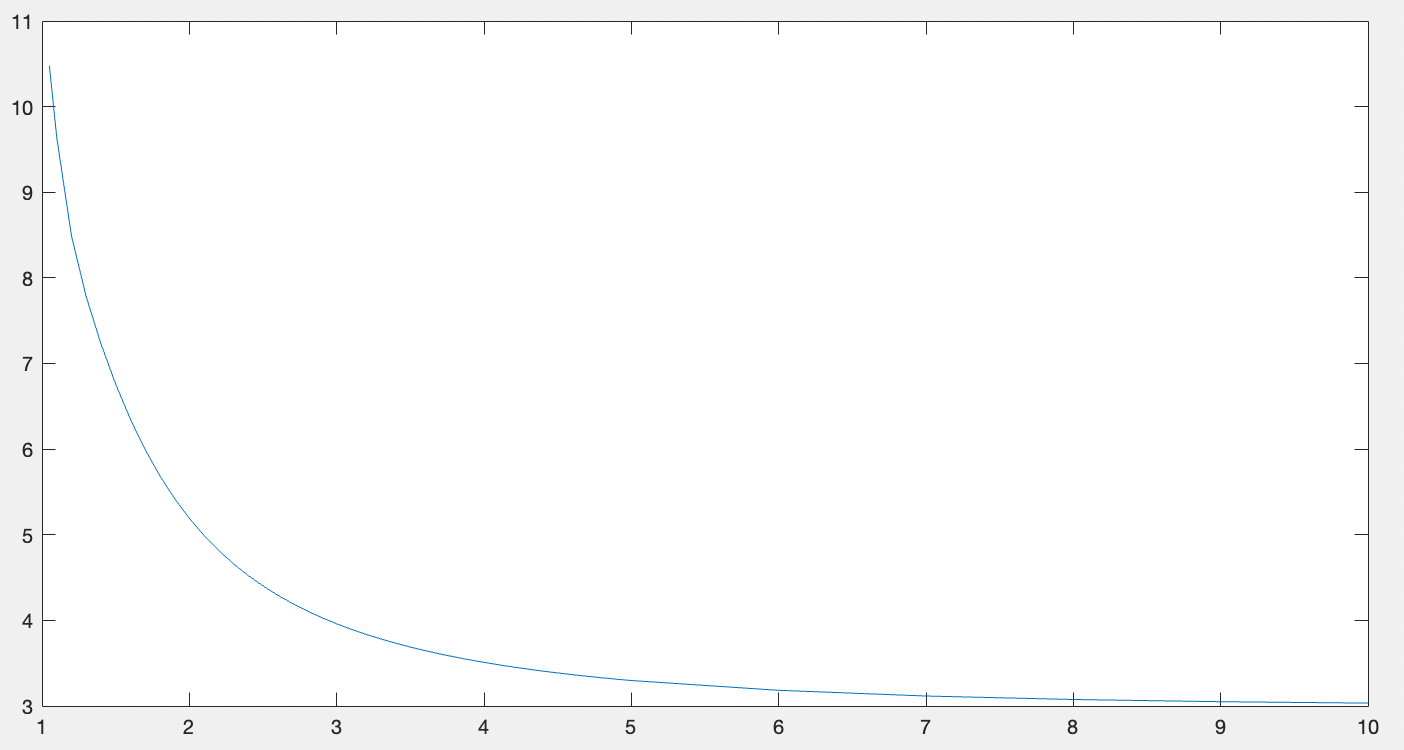}
    \caption{Value of $\| x_{\rm min}^{[p]} \|_{\ell^p_{16,S}}$ as a function of $p$; here $S$ is $16 \times 16$.}
    \label{fig:enter-label3}
\end{figure}

\begin{remark}
In general we have that 
 $1<p\le r<\infty$ implies $\| x\|_{\ell^p} \ge \| x \|_{\ell^r}. $
Thus, as both  $x_\tu{min}^{[p]}$ and  $x_\tu{min}^{[r]}$ are interpolants, we obtain for $p\le r$ that
$$ \| S x_\tu{min}^{[p]} \|_{\ell^p} \ge \| S x_\tu{min}^{[p]} \|_{\ell^r} \ge  \| S x_\tu{min}^{[r]} \|_{\ell^r}. $$ This explains why the graphs in Figures \ref{fig:enter-label2} and \ref{fig:enter-label3} are decreasing.
\end{remark}

The graphs in Figures \ref{fig:enter-label2} and \ref{fig:enter-label3} suggest that the optimal value is a continuous function of $p$. In fact, $x_\tu{min}^{[p]}$ depends continuously on $p$ (as Figure \ref{fig:enter-label} suggests) by the Maximum Theorem, as we show next.

\begin{lemma}
$x_\tu{min}^{[p]}$ and $\| x_\tu{min}^{[p]} \|_{\ell^p_{n,S}}$ depend continuously on $p$.    
\end{lemma}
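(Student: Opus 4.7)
The plan is to apply Berge's Maximum Theorem. Fix $p_0\in(1,\infty)$ and work on a compact sub-interval $I=[p_0-\ep,p_0+\ep]\subset(1,\infty)$. The minimizer $x_\tu{min}^{[p]}$ is defined as the unique $x\in\BC^n$ that minimizes $F(x,p):=\|Sx\|_{\ell^p}$ over the affine constraint set
\[
A=\{x\in\BC^n \colon x_j=s_j,\ j=1,\ldots,k\}.
\]
The strategy is to view this as a parametric optimization problem, apply Berge's theorem to get upper hemicontinuity of the argmin correspondence, and then invoke uniqueness (which follows from strict convexity of $\ell^p_{n,S}$) to upgrade to genuine continuity.

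First I would localize the minimizer in a common compact set. Fix any explicit interpolant $x_0\in A$, e.g., $x_0=(s_1,\ldots,s_k,0,\ldots,0)$. Since $x_\tu{min}^{[p]}$ is optimal, $\|Sx_\tu{min}^{[p]}\|_{\ell^p}\le\|Sx_0\|_{\ell^p}$, and the right-hand side is bounded uniformly for $p\in I$ by some constant $M$ (being a continuous function of $p$ on the compact interval $I$). In finite dimensions, $\|y\|_{\ell^\infty}\le\|y\|_{\ell^p}$, so $\|Sx_\tu{min}^{[p]}\|_{\ell^\infty}\le M$, and applying $S^{-1}$ yields $\|x_\tu{min}^{[p]}\|_{\ell^\infty}\le\|S^{-1}\|_\infty M=:R$. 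Let $K=\{x\in\BC^n \colon \|x\|_{\ell^\infty}\le R\}$; this is compact and contains every $x_\tu{min}^{[p]}$ for $p\in I$, and (enlarging $R$ if needed) contains $x_0$.

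Next I would set up Berge's theorem. The objective $F(x,p)=\|Sx\|_{\ell^p}=\bigl(\sum_{j=1}^n|(Sx)_j|^p\bigr)^{1/p}$ is jointly continuous on $\BC^n\times(1,\infty)$. The constraint correspondence $p\mapsto A\cap K$ is constant and takes nonempty compact values, hence is continuous (both upper and lower hemicontinuous). Berge's Maximum Theorem then gives that
\[
v(p):=\min_{x\in A\cap K}F(x,p)=\|x_\tu{min}^{[p]}\|_{\ell^p_{n,S}}
\]
is continuous in $p$, and the argmin correspondence $p\mapsto\mu(p):=\arg\min_{x\in A\cap K}F(\cdot,p)$ is upper hemicontinuous with nonempty compact values.

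Finally I would use strict convexity to conclude. For each $p\in(1,\infty)$, the norm $\|Sx\|_{\ell^p}$ is strictly convex in $x$ (since $S$ is invertible and $\ell^p$ is strictly convex for $1<p<\infty$), so its minimum over the convex set $A\cap K$ is attained at a unique point, namely $x_\tu{min}^{[p]}$. A single-valued upper hemicontinuous correspondence into a Hausdorff space is continuous as an ordinary function; this shows $p\mapsto x_\tu{min}^{[p]}$ is continuous on $I$, and hence on all of $(1,\infty)$ since $p_0$ was arbitrary. Continuity of $p\mapsto\|x_\tu{min}^{[p]}\|_{\ell^p_{n,S}}$ then follows either directly from the continuity of the value function or as a composition with the jointly continuous map $(x,p)\mapsto\|Sx\|_{\ell^p}$. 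The main obstacle is the uniform localization in Step 1, but finite-dimensionality makes it routine; without it, Berge's theorem could not be invoked because the affine constraint set $A$ is not compact.
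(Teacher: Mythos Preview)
Your proof is correct and follows essentially the same approach as the paper: both localize the minimizers in a fixed compact set via a uniform bound coming from a single explicit interpolant, then apply Berge's Maximum Theorem to the constant compact-valued constraint correspondence, and finally use strict convexity of $\ell^p$ (for $1<p<\infty$) to reduce the upper hemicontinuous argmin correspondence to a single-valued, hence continuous, map. Your version is in fact somewhat more careful than the paper's in spelling out the localization (via $\|\cdot\|_{\ell^\infty}\le\|\cdot\|_{\ell^p}$ and $S^{-1}$) and in restricting to a compact subinterval of $(1,\infty)$ before taking uniform bounds.
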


\begin{proof} 
Let $M>0$ be so that 
$$ \sup_{1<p<\infty} \| S x_\tu{min}^{[p]} \|_{\ell^1_n} <M.$$
For instance, one may choose $M=\| S (\sum_{j=1}^k s_je_j )\|_{\ell^1_n} + 1$. Let $$C=\{ x=(x_r)_r : x_j=s_j, j=1\dots, k, \| x \|_{\ell^1_n} \le M \}.$$ Next, let
$$ f(x,p) = \| x \|_{\ell^p_{n,S}} , f^*(p)= \min_{x\in C} f(x,p) . $$
Applying now the Maximum Theorem (see, e.g., \cite[Theorem 2, page 116]{Berge}) we obtain that 
$$ C^*(p)=\{ x \in C : f(x,p) = f^*(p) \} $$
is continuous in $p$. In our case, we have $C^*(p) = \{ x_\tu{min}^{[p]} \}$, and thus the lemma follows. As a composition of continuous functions is continuous, the continuity of $\| x_\tu{min}^{[p]} \|_{\ell^p_{n,S}}$
also follows.
\end{proof}

\subsection{Using $\ell^p_{N,S}$ norm minimization in time delay estimation}

This subsection is based on the papers \cite{MN,ZSZ}. We start with a description of the problem as outlined in \cite{ZSZ}. This concerns observed discrete-time signals at two sensors
$$ \left\{ \begin{matrix} x_1[n] = s[n] + v_1[n] ,  \ \ \ \ \ \ \ \  & n=1,2\ldots , N, \cr
x_2[n] = \beta s[n-D] + v_2[n] , &n=1,2\ldots, N.\end{matrix} \right.$$
where $s[n]$ is an unknown source signal, $\beta$ and $D$ are the attenuation factor and time delay between the sensors, respectively, and $v_1[n]$ and $v_2[n]$ are uncorrelated additive noises independent of the source signal. In order to estimate the time delay $D$, the following minimization is used:
\begin{equation}\label{hopt} h_{\rm opt} = {\rm arg \ min}_h \sum_{n=M+1}^{N-M} \left| x_2[n] - \beta \sum_{n=-M}^M h_i x_1[n-i]\right|^p,\end{equation}
where the approximation 
$$ s[n-D]=\sum_{i=-\infty}^\infty s[n-i]{\rm sinc}(i-D)\approx \sum_{i=-M}^M s[n-i]{\rm sinc}(i-D)$$ is used. The delay $D$ is next approximated by computing
$$D_{\rm opt} ={\rm arg \ max}_D \sum_{i=-M}^M h_{{\rm opt},i}{\rm sinc}(D-i).$$ Depending on the nature of the noise some values of the parameter $p$ perform better than others. When the noise is Gaussian, $p=2$ is a good choice, while in non-Gaussian instances a value $1<p<2$ may perform better. 

We can recast the problem in the setting of this paper as follows. Let $T$ be the (tall) Toeplitz matrix
$$ T= (x_1[i-j])_{i=2M+1, j=0}^{N \ \ \ \ \ \ \ \ 2M}$$
and $y=(x_2[i])_{i=M+1}^{N-M}$, and perform Gaussian elimination to write $[T\ y]$ as
$$[T\ y]=S[E \ \hat{y}],$$ with $S$ invertible and $E$ the row reduced echelon form of $T$. Let $r$ be the rank of $T$.  We now have that \eqref{hopt} is equivalent to 
\begin{equation}\label{hopt2} x_{\rm min} = {\rm arg \ min}_x  \| Sx\|_p \ {\rm subject \ to} \ x_{j}=\hat{y}_j, j=r+1,\ldots , N  ,\end{equation}
and $Eh_{\rm opt}=\hat{y}-x_{\rm min}$. Notice that the last equation is easy to solve, as $E$ is in row reduced echelon form. 

As an illustration, we took a source signal $s[n]$, $n=1, \ldots, 2001$, with each entry a random number in the interval $[0,1]$ (using 'rand' in Matlab). In addition, we took $\beta=1$, $D=5$ and non-Gaussian noise with values $v_2[n] \in \{ -0.4,0,0.4 \}$ and  $v_2[n] \in \{ -10,0,10 \}$. We put $M=10$. Letting $p=1.01$, we find the solutions 
$$ h_{{\rm opt},p=1.01}^{0.4} = 
\begin{bmatrix}
  -0.000232823052903 \cr 
   0.000040776761574 \cr 
  -0.000049481864553\cr 
   0.000133215025281\cr 
  -0.000183737404681\cr 
   1.000000881642775\cr 
  -0.000044649227715\cr 
  -0.000196161795022\cr 
  -0.000095126313718\cr 
  -0.000097099554212\cr 
   0.000033868802636\cr 
  -0.000146470233426\cr 
  -0.000200182922050\cr 
  -0.000079300151792\cr 
   0.000079332208475\cr 
   0.000120242785738\cr 
   0.000065015502363\cr 
   0.000281342918683\cr 
  -0.000004940174283\cr 
   0.000070394035326\cr 
   0.000007260964693 \end{bmatrix},
  h_{{\rm opt},p=1.01}^{10}=\begin{bmatrix}    -0.005820061605867 \cr
   0.001019548250559 \cr
  -0.001236528515010 \cr
   0.003331180840021 \cr
  -0.004592569849389 \cr
   1.000022762279084 \cr
  -0.001116623684563 \cr
  -0.004903685210011 \cr
  -0.002378077978435 \cr
  -0.002427966380243 \cr
   0.000845998083710 \cr
  -0.003661436517994 \cr
  -0.005004189106320 \cr
  -0.001983061497221 \cr
   0.001983807240446 \cr
   0.003005241842425 \cr
   0.001625326145315 \cr
   0.007033634145231 \cr
  -0.000123313898717 \cr
   0.001759697717274 \cr
   0.000181776684826 \end{bmatrix}, $$
   where the superscript in $h_{\rm opt}$ indicates the level of the noise.
   From this it is clear that we retrieve $D=5$ for both noise levels.
   When we use $p=2$ we find 
$$ h_{{\rm opt},p=2}^{0.4} = 
\begin{bmatrix}
 0.018009040220347 \cr
   0.002332350834668 \cr
   0.001890133545715 \cr
  -0.003072754337702 \cr
   0.030593903501315 \cr
   0.988017340867130 \cr
  -0.021206323141440 \cr
   0.017823418513643 \cr
   0.015038119392389 \cr
   0.002017146896049 \cr
  -0.001601106447733 \cr
   0.012924523085732 \cr
  -0.000953916946889 \cr
  -0.008799966349236 \cr
   0.001692369555610 \cr
  -0.016685275281673 \cr
  -0.012582988810589 \cr
  -0.023738009359622 \cr
  -0.010928158128446 \cr
  -0.011105474647007 \cr
   0.009191378729438 
\end{bmatrix}, 
h_{{\rm opt},p=2}^{10} = 
\begin{bmatrix}  
   0.450226005508666 \cr
   0.058308770866700 \cr
   0.047253338642885 \cr
  -0.076818858442547 \cr
   0.764847587532876 \cr
   0.700433521678247 \cr
  -0.530158078536007 \cr
   0.445585462841087 \cr
   0.375952984809734 \cr
   0.050428672401218 \cr
  -0.040027661193312 \cr
   0.323113077143296 \cr
  -0.023847923672227 \cr
  -0.219999158730899 \cr
   0.042309238890246 \cr
  -0.417131882041814 \cr
  -0.314574720264717 \cr
  -0.593450233990547 \cr
  -0.273203953211154 \cr
  -0.277636866175162 \cr
   0.229784468235959 \cr
   \end{bmatrix} .$$
   At the noise level 0.4, we still conclude $D=5$ when we use $p=2$, but for the noise level 10 the optimization with $p=2$ no longer gives a useful result (as the largest entry in $h_{\rm opt}$ moved position and several other entries are not that far from the largest). 

\subsection{Optimal interpolation in multivariable Hardy and Bergman spaces where $p\in 2\BN$.}

Using ideas from \cite[Proof of Theorem 2]{Harmsen} we have the following observation. As before $\cZ^p$ is either a Hardy space $H^p$ or a Bergman space $A^p$.

\begin{proposition}\label{Bas2} Let $1<p<\infty$. 
Consider the interpolation problem in $\cZ^p$ with $f(z_i)=s_i\neq 0$, $i=1,\ldots,k$. Let the unique minimal norm interpolant be denoted by $f_{\tu{min},p}$. Associated, let $g_{\tu{min}, {\bf{s}}^{p/2}}$ denote the unique minimal interpolant in $\cZ^2$ with interpolation conditions $g(z_i)=s_i^{\frac{p}{2}}$, $i=1,\ldots, k$. If $f_{\tu{min},p}^{\frac{p}{2}}$ and $g_{\tu{min}, {\bf{s}}^{p/2}}^\frac2p 
$ are analytic, $f_{\tu{min},p}^{\frac{p}{2}}(z_i)=s_i^{\frac{p}{2}}$ and $g_{\tu{min}, {\bf{s}}^{p/2}}^\frac2p (z_i)=s_i$, $i=1,\ldots,k$, then 
$f_{\tu{min},p}=g_{\tu{min}, {\bf{s}}^{p/2}}^\frac2p$. When $p\in 2\BN$, the analyticity of $f_{\tu{min},p}^{\frac{p}{2}}$ and the interpolation conditions $f_{\tu{min},p}^{\frac{p}{2}}(z_i)=s_i^{\frac{p}{2}}$, $i=1,\ldots,k$, are automatic. 
\end{proposition}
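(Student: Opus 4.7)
The plan is to exploit the pointwise identity $|h|^p=|h^{p/2}|^2$, which converts the $\cZ^p$-minimization into a $\cZ^2$-minimization where uniqueness and structure of minimizers are classical. Concretely, whenever $h\in\cZ^p$ admits an analytic $p/2$-th power lying in $\cZ^2$, the norm identity
\[
\|h\|_{\cZ^p}^{p/2}=\|h^{p/2}\|_{\cZ^2}
\]
is immediate from the definitions. My strategy is to identify $f_{\tu{min},p}^{p/2}$ with the Hilbert-space minimal-norm interpolant $g_{\tu{min},{\bf s}^{p/2}}$.

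First I would record two feasibility observations encoded in the hypotheses. By assumption $g_{\tu{min},{\bf s}^{p/2}}^{2/p}$ is analytic with $g_{\tu{min},{\bf s}^{p/2}}^{2/p}(z_i)=s_i$, and it lies in $\cZ^p$ because $\int |g_{\tu{min},{\bf s}^{p/2}}^{2/p}|^p\,\tu{d}\mu=\int|g_{\tu{min},{\bf s}^{p/2}}|^2\,\tu{d}\mu<\infty$; hence it is a valid competitor for the $\cZ^p$-problem. Symmetrically, $f_{\tu{min},p}^{p/2}$ is analytic, interpolates $s_i^{p/2}$ at $z_i$, and belongs to $\cZ^2$ by the norm identity, so it is a valid competitor for the $\cZ^2$-problem with shifted data.

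Second, combining the minimality of $f_{\tu{min},p}$ in $\cZ^p$ and of $g_{\tu{min},{\bf s}^{p/2}}$ in $\cZ^2$ with the norm identity (applied once to $g_{\tu{min},{\bf s}^{p/2}}^{2/p}$ and once to $f_{\tu{min},p}$) yields the sandwich
\[
\|f_{\tu{min},p}\|_{\cZ^p}^{p/2}\le\|g_{\tu{min},{\bf s}^{p/2}}^{2/p}\|_{\cZ^p}^{p/2}=\|g_{\tu{min},{\bf s}^{p/2}}\|_{\cZ^2}\le\|f_{\tu{min},p}^{p/2}\|_{\cZ^2}=\|f_{\tu{min},p}\|_{\cZ^p}^{p/2},
\]
forcing equality throughout. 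Thus $f_{\tu{min},p}^{p/2}$ attains the minimum for the $\cZ^2$-problem with data $s_i^{p/2}$; uniqueness of the Hilbert-space minimizer gives $f_{\tu{min},p}^{p/2}=g_{\tu{min},{\bf s}^{p/2}}$. Raising both sides to the $2/p$-th power, using the branch pinned down by the interpolation conditions together with analyticity on each connected component of the domain, yields $f_{\tu{min},p}=g_{\tu{min},{\bf s}^{p/2}}^{2/p}$.

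For the automatic part when $p\in 2\BN$, the exponent $p/2$ is a positive integer, so $f_{\tu{min},p}^{p/2}$ is an iterated product of the analytic function $f_{\tu{min},p}$, hence trivially analytic, and $f_{\tu{min},p}^{p/2}(z_i)=f_{\tu{min},p}(z_i)^{p/2}=s_i^{p/2}$ follows by pointwise evaluation. The main delicate point I foresee is the branch-selection bookkeeping in the last step: the hypotheses $s_i\neq 0$ together with the stipulated analyticity and interpolation identities for $g_{\tu{min},{\bf s}^{p/2}}^{2/p}$ are precisely what guarantees that inverting the $p/2$-th power via the chosen branch of $(\cdot)^{2/p}$ returns $f_{\tu{min},p}$ rather than some competing branch, so that the equality of $f_{\tu{min},p}^{p/2}$ and $g_{\tu{min},{\bf s}^{p/2}}$ lifts to the desired equality of $f_{\tu{min},p}$ and $g_{\tu{min},{\bf s}^{p/2}}^{2/p}$.
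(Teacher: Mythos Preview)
Your argument is correct and follows essentially the same idea as the paper: exploit the norm identity $\|h\|_{\cZ^p}^{p/2}=\|h^{p/2}\|_{\cZ^2}$ to compare the two minimization problems and then invoke uniqueness. The only difference is cosmetic: the paper uses just one direction of your sandwich (that $f_{\tu{min},p}^{p/2}$ is a $\cZ^2$-competitor, hence $\|f_{\tu{min},p}\|_{\cZ^p}^p\ge\|g_{\tu{min},{\bf s}^{p/2}}^{2/p}\|_{\cZ^p}^p$) and then concludes directly from uniqueness of the \emph{$\cZ^p$}-minimizer that $f_{\tu{min},p}=g_{\tu{min},{\bf s}^{p/2}}^{2/p}$, which sidesteps your final branch-selection step entirely; your sandwich already contains this equality of $\cZ^p$-norms, so you could shortcut the same way.
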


Note that confirming that both $f_{\tu{min},p}^{\frac{p}{2}}$ and $g_{\tu{min}, {\bf{s}}^{p/2}}^\frac2p$ are indeed interpolants, is necessary as the branch of logarithm should have been chosen consistently. Indeed, it is for instance possible that the initial branch chosen to compute $s_i^{\frac{p}{2}}$, $i=1,\ldots, k$, may not work for $g_{\tu{min}, {\bf{s}}^{p/2}}^\frac2p $.

\begin{proof}
If $f_{\tu{min},p}^{\frac{p}{2}} \in \cZ^2$ then this function satisfies the interpolation conditions $g(z_i)=s_i^{\frac{p}{2}}$, $i=1,\ldots, k$. As $g_{\tu{min}, {\bf{s}}^{p/2}} \in \cZ^2$ is the minimal norm interpolant for these conditions, we obtain that 
$$ \| f_{\tu{min},p}^{\frac{p}{2}} \|_{\cZ^2} \ge \| g_{\tu{min}, {\bf{s}}^{p/2}} \|_{\cZ^2}. $$
But then
$$ \| f_{\tu{min},p} \|_{\cZ^p}^p = \| f_{\tu{min},p}^{\frac{p}{2}} \|_{\cZ^2}^2 \ge \| g_{\tu{min}, {\bf{s}}^{p/2}} \|_{\cZ^2}^2 = \| g_{\tu{min}, {\bf{s}}^{p/2}}^\frac2p \|_{\cZ^p}^p .$$
As the minimal norm interpolant is unique, we must have $f_{\tu{min},p}=g_{\tu{min}, {\bf{s}}^{p/2}}^\frac2p$.
\end{proof}

As is well-known (see, e.g., \cite[Chapter 3]{PaulsenR}), in the case of a reproducing kernel Hilbert space (RKHS) applying the first representer theorem (Theorem \ref{repr}) comes down to solving a system of linear equations. Indeed, we obtain that 
\begin{equation}\label{rkhs} x_\tu{min} = \sum_{j=1}^k c_j K(\cdot , z_j) \ \hbox{\rm where} \  (c_j)_{j=1}^k = [(K(z_j,z_i))_{i,j=1}^k]^{-1} (s_i)_{i=1}^k . \end{equation}
If we combine this with Proposition \ref{Bas2}, we are able to easily come up with a candidate for the minimal norm interpolant in $H^p$ or $A^p$ for the case when $p \in 2\BN$. Subsequently checking for analyticity (of $g_\tu{min}^{2/p}$ in Proposition \ref{Bas2}), then seals the deal. Let us illustrate this on an example. 

\begin{example}
We consider the two-variable Bergman space $A_2^4$. Let $z_1=(\frac14, \frac34)$, $z_2=(0,0)$ and $s_1=1$, $s_2=98/100$.
Proposition \ref{Bas2} tells us to consider the minimal norm interpolant $g_\tu{min}$ in $A^2_2$ with interpolation conditions $g(z_1)=1^2$ and $g(z_2)=(98/100)^2$. 
Solving for $c_1$ and $c_2$ in
$$  
\begin{bmatrix}
\frac{1}{(1-\langle z_1 , z_1 \rangle_{\rm Eucl})^3} & \frac{1}{(1-\langle z_2 , z_1 \rangle_{\rm Eucl})^3} \cr
\frac{1}{(1-\langle z_1 , z_2 \rangle_{\rm Eucl})^3} & \frac{1}{(1-\langle z_2 , z_2 \rangle_{\rm Eucl})^3}
\end{bmatrix}
\begin{bmatrix} c_1\cr c_2
\end{bmatrix} =
\begin{bmatrix}
\frac{16^3}{6^3} & 1 \cr 1 & 1
\end{bmatrix}
\begin{bmatrix}
c_1\cr c_2
\end{bmatrix} = 
\begin{bmatrix}
1\cr \frac{98^2}{100^2}
\end{bmatrix},$$
giving $c_1=2673/1212500, c_2=1161812/1212500$,
we find that
$$ g_\tu{min}(\zeta)=
\frac{1}{1212500}\left(
\frac{2673}{(1-\zeta_1/4-3\zeta_2/4)^3}+1161812\right).$$
For $\zeta=(\zeta_1,\zeta_2) \in B_2$ we have that 
$|\langle \zeta , \begin{bmatrix}
    1/4 \cr 3/4
\end{bmatrix} \rangle_{\rm Eucl}| \le \sqrt{10}/4 $ and thus 
$$|1-\zeta_1/4-3\zeta_2/4|^3 \ge |1-\frac{\sqrt{10}}{4}|^3
\approx 0.00918587. $$
Since $2673/1161812 \approx 0.0023007<0.00918587$, we have that $|g_\tu{min}(\zeta)| \ge \epsilon >0$, $\zeta \in B_2$, for some $\epsilon>0$.
Thus $g_\tu{min}^\frac12$ is a well-defined analytic function on $B_2$, and thus by Proposition \ref{Bas2} we find that the optimal interpolant is 
$f_{\tu{min},4}=g_\tu{min}^\frac12$. \hfill $\Box$
\end{example}

Finally, we observe that in the setting of Figure \ref{fig:enter-label0}, the function $f_\tu{min}^{[p]}$, $p\in [1.7,2.6]$, does not have any roots in $\BD$. Thus, by Proposition \ref{Bas2},  $f_\tu{min}^{[p]}=g_{\tu{min}, {\bf{s}}^{p/2}}^\frac2p$, explaining the continuity we see in Figure \ref{fig:enter-label0}.

\bigskip

\subsection*{Acknowledgements}
We thank Professor David Ambrose for pointing us to Henry Helson's book 'Harmonic Analysis'. 
This work is based on research supported in part by the National Research Foundation of South Africa (NRF, Grant Numbers 118513 and 127364) and the DSI-NRF Centre of Excellence in Mathematical and Statistical Sciences (CoE-MaSS). Any opinion, finding and conclusion or recommendation expressed in this material is that of the authors and the NRF and CoE-MaSS do not accept any liability in this regard. We also gratefully acknowledge funding from the National Graduate Academy for Mathematical and Statistical Sciences (NGA(MaSS)). In addition, HW is partially supported by United States National Science Foundation (NSF) grant DMS-2000037.

\end{document}